\newcommand\C{\mathbb{C}}
\newcommand\cI{\mathcal{I}}
\newcommand\cS{\mathcal{S}}
\newcommand\cT{\mathcal{T}}
\newcommand\codim{\operatorname{codim}}
\newcommand\rO{\mathrm{O}}
\newcommand\rSO{\mathrm{SO}}
\newcommand\rSP{\mathrm{SP}}
\newcommand\T{\mathcal{T}}
\newcommand\Q{\mathbf{Q}}
\newcommand\bP{\mathbf{P}}
\newcommand\bR{\mathbf{R}}
\newcommand\bi{\mathbf{i}}
\newcommand\bj{\mathbf{j}}
\newcommand\ri{\mathrm{i}}
\newcommand\sym{\mathrm{sym}}
\newcommand\rank{\mathrm{rank}}
\newcommand\dist{\mathrm{dist}}
\newcommand\bs{\boldsymbol}
\newcommand\R{\mathbb{R}}
\newcommand\<{\langle}
\renewcommand\>{\rangle}
\newcommand{\floor}[1]{\lfloor #1 \rfloor}
\DeclareMathOperator{\diag}{diag}
\DeclareMathOperator{\Res}{Res}
\theoremstyle{plain}
\newtheorem{thm}{thm}[section]
\newtheorem{theorem}[thm]{Theorem}
\newtheorem{proposition}[thm]{Proposition}
\newtheorem{lemma}[thm]{Lemma}
\newtheorem{corollary}[thm]{Corollary}
\theoremstyle{definition}
\newtheorem{definition}[thm]
{Definition}
\newtheorem{example}[thm]
{Example}
\theoremstyle{remark}
\newtheorem{remark}[thm]
{Remark}
\title{Orthogonal eigenvectors and singular vectors of tensors}
\author{\'Alvaro Ribot}
\address{\'{A}lvaro Ribot, Harvard University}
\email{aribotbarrado@g.harvard.edu}
\author{Anna Seigal}
\address{Anna Seigal, Harvard University}
\email{aseigal@seas.harvard.edu}
\author{Piotr Zwiernik}
\address{
Piotr Zwiernik, 
University of Toronto, Universitat Pompeu Fabra, and Barcelona School of Economics }\email{piotr.zwiernik@upf.edu}
\begin{document}

\begin{abstract}
The spectral theorem says that a real symmetric matrix has an orthogonal basis of eigenvectors and that, for a matrix with distinct eigenvalues, the basis is unique (up to signs). In this paper, we study the symmetric tensors with an orthogonal basis of eigenvectors and show that, for a generic such tensor, the orthogonal basis is unique. This resolves a conjecture by Mesters and Zwiernik. We also study the non-symmetric setting.
The singular value decomposition says that a real matrix has an orthogonal basis of singular vector pairs
and that, for a matrix with distinct singular values, the basis is unique (up to signs). We describe the tensors with an orthogonal basis of singular vectors and show that a generic such tensor has a unique orthogonal basis, with one exceptional format: order four binary tensors.
We use these results to propose a new tensor decomposition that generalizes an orthogonally decomposable decomposition and specializes the Tucker decomposition.
\end{abstract}

\maketitle

\section{Introduction}

The spectral theory of symmetric matrices is a cornerstone of linear algebra, underpinning applications across mathematics, physics, and engineering. 
It says that a real $n\times n$ symmetric matrix $M$ has an orthogonal basis $v_1,\ldots,v_n$ of $\R^n$ such that each $v_i$ is a unit eigenvector of~$M$. If $M$ is generic, more precisely when all its eigenvalues are distinct, this basis is unique up to the signs of the basis vectors.
This result enables efficient diagonalization, quadratic form analysis, dimensionality reduction, and numerous other applications.

The question of extending this result to tensors has attracted considerable interest over the past two decades. The concepts of eigenvectors and eigenvalues were extended to tensors by \cite{lim2005singular} and \cite{qi2005eigenvalues}.  
This laid the groundwork for subsequent work to explore the spectral properties of symmetric tensors and their applications; see \cite{qi2018tensor} for an overview.

Unlike for matrices, a dimension count reveals that a general symmetric tensor does not have an orthogonal basis of eigenvectors. This raises questions about the conditions under which an orthogonal basis of eigenvectors exists, when the basis is unique, and the structure that the basis imposes in tensor decomposition.

One class of tensors with an orthogonal basis of eigenvectors are the orthogonally decomposable (odeco) tensors. These have been widely studied; see, e.g., \cite{kolda2001orthogonal, anandkumar2014tensor, robeva2016orthogonal, boralevi2017orthogonal, robeva2017singular}. Symmetric odeco tensors admit a decomposition as in the spectral theorem of a matrix, and the decomposition is essentially unique. 
In this paper, we study symmetric tensors that admit an orthogonal basis of eigenvectors and conditions under which they are unique. As we show, the family of such tensors is much larger than the set of odeco tensors.

Let $S^d(\R^n)\subset \R^{n\times \cdots \times n}$ be the set of real symmetric $n\times \cdots \times n$ tensors of order $d$. Given $\T\in S^d(\R^n)$, we denote its coordinates with respect to the canonical basis of $\R^{n\times \cdots \times n}$ by~$\T_{i_1,\ldots, i_d}$ for $i_1,\ldots,i_d\in [n]$. A tensor $\T$ is symmetric if
$\T_{i_1,\ldots, i_d}=\T_{i_{\sigma(1)},\ldots ,i_{\sigma(d)}}$ for any indices~$1\leq i_1,\ldots,i_d\leq n$ and any permutation $\sigma$ of the set $\{1,\ldots,d\}$.
Given $\T \in S^d(\R^n)$, a unit norm vector $v \in \R^n$ is an \emph{eigenvector} of $\T$ with \emph{eigenvalue} $\lambda \in \R$ if
$\T(\cdot, v, \dots, v) = \lambda v$.
In \cite{qi2005eigenvalues} these are called Z-eigenvectors and Z-eigenvalues, respectively. If $d=2$, this is the standard definition of eigenvectors from linear algebra.

Our first main result generalizes the uniqueness of an orthogonal basis of eigenvectors from symmetric matrices to symmetric tensors. This will settle the conjecture in \cite{mesters2022non}.

\begin{theorem}\label{thm:main-symmetric}
    If $\T\in S^d(\R^n)$ is a generic symmetric tensor with an orthogonal basis of eigenvectors, then this basis is unique (up to sign flip).
\end{theorem}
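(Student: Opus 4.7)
The plan is to parameterize the symmetric tensors with an orthogonal basis of eigenvectors as the image of a single natural map, and then to show that this map is generically one-to-one modulo the signed permutation group $\Gamma=S_n\ltimes(\mathbb{Z}/2)^n$. Let $V\subset S^d(\R^n)$ be the linear subspace cut out by $(\T_0)_{j,i,\ldots,i}=0$ for all $i\neq j$; equivalently, $V$ is the set of symmetric tensors for which the standard basis $e_1,\ldots,e_n$ is an orthogonal basis of eigenvectors with eigenvalues $\lambda_i=(\T_0)_{i,\ldots,i}$. Every $\T$ with such a basis lies in the image of
\[
\Phi\colon \rO(n)\times V\longrightarrow S^d(\R^n),\qquad (O,\T_0)\longmapsto O\cdot\T_0,
\]
and, embedded in $\rO(n)$ as signed permutation matrices, $\Gamma$ preserves $V$ and satisfies $\Phi(OP^{-1},P\cdot\T_0)=\Phi(O,\T_0)$. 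The theorem is therefore equivalent to generic injectivity of the induced map $\overline\Phi\colon(\rO(n)\times V)/\Gamma\to S^d(\R^n)$ onto its image.

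To prove generic finiteness, I would verify injectivity of $\mathrm{d}\Phi$ at a convenient odeco point. Take $\T_0=\sum_i\lambda_i e_i^{\otimes d}$ with distinct nonzero eigenvalues. The differential at $(I,\T_0)$ sends $(A,\delta\T_0)\in\mathfrak{so}(n)\times V$ to $A\cdot\T_0+\delta\T_0$, and its kernel consists of pairs $(A,-A\cdot\T_0)$ with $A\cdot\T_0\in V$. A direct index calculation using the sparsity of the odeco $\T_0$ yields $(A\cdot\T_0)_{j,i,\ldots,i}=-\lambda_i A_{ij}$ for $i\neq j$, so the condition forces $A=0$. Semicontinuity of rank then gives injectivity of $\mathrm{d}\Phi$ on a dense Zariski open subset of $\rO(n)\times V$, so $\Phi$ is generically finite. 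To upgrade finiteness to ``a single $\Gamma$-orbit,'' I would compute $\Phi^{-1}(\T_0)$ explicitly: any $(O,\T_1)$ in this fiber satisfies $\T_1=O^{-1}\cdot\T_0=\sum_i\lambda_i(O^\top e_i)^{\otimes d}$, and the condition $\T_1\in V$ forces, by the classical uniqueness of the orthogonal basis of an odeco tensor with distinct nonzero eigenvalues, $O^\top$ to be a signed permutation matrix. Hence $\Phi^{-1}(\T_0)$ is a single $\Gamma$-orbit of size $|\Gamma|$, and together with the infinitesimal injectivity above this shows $\overline\Phi$ is unramified of degree one at $\T_0$.

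The main obstacle will be the final global step: transferring ``fiber of size one at this one point'' to ``fiber of size one generically.'' This is a standard degree-constancy fact for dominant morphisms of irreducible varieties, but the image of $\Phi$ is a semi-algebraic rather than Zariski-closed subset of $S^d(\R^n)$, so the cleanest argument proceeds after complexification. That route requires irreducibility of the Zariski closure of the image (which should follow from that of each connected component of $\rO(n)_\C\times V_\C$) together with the analogous uniqueness of complex odeco decompositions with distinct nonzero eigenvalues, so that the degree of $\overline\Phi$ is globally one; alternatively, a direct real-analytic monodromy argument on the smooth locus of the image should suffice.
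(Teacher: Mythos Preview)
Your approach is quite different from the paper's. The paper reformulates the theorem as: for generic $\T\in V_\sym$ and $Q\in\rO(n)$, one has $Q\bullet\T\in V_\sym$ only if $Q\in\rSP(n)$. It then proves this by a direct codimension count. For $n=2$ it computes, for each $Q\notin\rSP(2)$, the codimension of $(Q\bullet V_\sym)\cap V_\sym$ in $V_\sym$ via the rank of an explicit $2\times(d-1)$ matrix, and shows this codimension exceeds $\dim\rO(2)=1$, so the union over such $Q$ cannot cover $V_\sym$. For general $n$ it reduces to $n=2$ by block-diagonalizing $Q$ via the real normal form of orthogonal matrices and restricting to the $2\times\cdots\times2$ subtensor indexed by each block. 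No appeal to odeco uniqueness or to fiber degrees is made.

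Your local computations are correct, but the ``global step'' is a genuine gap, and your proposed fix does not close it. Knowing that the fiber over one odeco point is a single $\Gamma$-orbit with injective differentials gives only a \emph{lower} bound on the generic degree of $\overline\Phi$: by the inverse function theorem, nearby fibers have \emph{at least} $|\Gamma|$ points, not at most. To bound the generic degree from above by the cardinality of a special fiber one needs properness, and passing to $\C$ destroys exactly this---$\rO(n,\C)$ is noncompact, so preimages can escape to infinity as one perturbs away from the odeco point, and complex odeco uniqueness does not prevent that. What would work, and what your ``real-analytic monodromy'' remark gestures at without saying, is to stay over $\R$: since $\rO(n)$ is compact and $\|\Phi(O,\T_0)\|=\|\T_0\|$, the map $\Phi$ is proper, so on the unramified locus it is a genuine covering and the fiber cardinality is locally constant. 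You would then still need to check that the odeco point lies in the smooth locus of $X_\sym$ and that the unramified locus over the smooth part of $X_\sym$ is connected, neither of which is entirely automatic. The paper's direct codimension argument sidesteps all of this.
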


The analog of the spectral theorem for non-symmetric matrices is the singular value decomposition (SVD): a matrix $M \in \R^{n_1 \times n_2}$ can be decomposed as $M = U \Sigma V^\top$, where $\Sigma \in \R^{n_1 \times n_2}$ is diagonal with diagonal entries $\sigma_i \geq 0$, and $U$ and $V$ are orthogonal matrices; that is, $U^\top U = I_{n_1}$ and $V^\top V = I_{n_2}$, where $I_n$ denotes the $n \times n$ identity matrix. Let $m = \min\{n_1, n_2\}$, then we can write $U \Sigma V^\top = \sum_{i=1}^m \sigma_i u_i \otimes v_i$. The set $\{u_1 \otimes v_1, \dots, u_m \otimes v_m \}$ is a set of orthogonal singular vector pairs of $M$.

Generalizing the SVD from matrices to higher-order tensors is a central topic in multilinear algebra. There are several approaches \cite{lathauwerSVD, draisma2018best, ribot2024decomposing}, but no generalization can inherit all the properties of the SVD \cite{vannieuwenhoven2014generic}. In this paper, we focus on the following property: a generic matrix $M \in \R^{n_1 \times n_2}$ has a unique basis of orthogonal singular vector pairs (up to sign flip). Here generic means that all its singular values $\sigma_i$ are distinct. The following result generalizes this to tensors.

\begin{theorem}\label{thm:main}
    If $\T\in \R^{n_1 \times \cdots \times n_d}$, $(n_1, \dots, n_d) \neq (2,2,2,2)$, is a generic tensor with an orthogonal basis of singular vector tuples, then this basis is unique (up to sign flip).
    If $\T \in \R^{2 \times 2 \times 2 \times 2}$ is a generic tensor with an orthogonal basis of singular vector tuples, then it has four such bases (up to sign flip). 
\end{theorem}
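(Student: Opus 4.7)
My strategy is to parametrize the tensors with an orthogonal basis of singular vector tuples by a multilinear-action map and analyze its generic fiber. Given such a $\T$, assembling the mode-$i$ basis vectors into the columns of an orthogonal matrix $Q_i\in O(n_i)$ and applying $(Q_1^\top,\dots,Q_d^\top)$ to $\T$ yields a core tensor $\cS$ in which the standard tuples $(e_k,\ldots,e_k)$ are singular vector tuples. This amounts to the vanishing of certain ``near-diagonal'' entries of $\cS$ (namely $\cS_{k,\ldots,k,i,k,\ldots,k}=0$ for $i\neq k$), cutting out a linear subspace $C$. The orthogonal bases of singular vector tuples of $\T$ are then in bijection with the preimages of $\T$ under the map $\phi:O(n_1)\times\cdots\times O(n_d)\times C\to \R^{n_1\times\cdots\times n_d}$ sending $(Q_1,\ldots,Q_d,\cS)\mapsto (Q_1,\ldots,Q_d)\cdot\cS$, taken modulo the obvious sign-flip and relabelling symmetries.

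\textbf{Generic formats.} For $(n_1,\ldots,n_d)\neq(2,2,2,2)$ I would reduce to \Cref{thm:main-symmetric} via a symmetrization. To $\T$ associate the symmetric tensor $\hat\T\in S^d(\R^{n_1+\cdots+n_d})$ whose only non-vanishing entries are block copies of $\T$. Eigenvectors of $\hat\T$ are in bijection with singular vector tuples of $\T$, and an orthogonal basis of the latter extends, using the block structure, to an orthogonal basis of eigenvectors of $\hat\T$. Uniqueness for $\hat\T$ via \Cref{thm:main-symmetric} would then deliver the uniqueness for $\T$. The subtle point is that $\hat\T$ is not generic in $S^d(\R^{n_1+\cdots+n_d})$, so one must verify that it remains generic within the locus of symmetric tensors admitting an orthogonal basis of eigenvectors, i.e.\ that no collapse of eigenvectors is forced by the block form.

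\textbf{Exceptional case $(2,2,2,2)$.} This is the main obstacle. Here $\phi$ has source of dimension $4+8=12$, and the fiber count is $4$ (up to sign) rather than~$1$. My plan is first to exhibit four distinct orthogonal bases for an explicit $\T$; the extra bases are plausibly related to the rich symmetries of $(\R^2)^{\otimes 4}$, such as the three isomorphisms $(\R^2)^{\otimes 4}\simeq (\R^2)^{\otimes 2}\otimes(\R^2)^{\otimes 2}$ arising from the three partitions of the four modes into two pairs. To show the count is \emph{exactly} four I would compute the scheme-theoretic fiber of $\phi$ at a random base point (for instance via a Gr\"obner basis calculation) and then use semicontinuity to propagate the count to all generic $\T$ in the image. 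Confirming that this exceptional behaviour occurs only for the format $(2,2,2,2)$, and not for small neighbouring formats such as $(2,2,2)$ or $(2,2,2,n)$ with $n>2$, will require a separate dimension-theoretic check ruling out extra parametrization symmetries there.
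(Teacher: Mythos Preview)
Your parametrization setup matches the paper's (your subspace $C$ is their $V$), but the reduction strategy has a genuine gap. You propose to deduce the non-symmetric statement from \Cref{thm:main-symmetric} by passing to the block-symmetric tensor $\hat\T\in S^d(\R^{n_1+\cdots+n_d})$, and you correctly flag the obstacle: $\hat\T$ is far from generic in $X_\sym$, since it lives in a low-dimensional linear slice cut out by the block-zero pattern. \Cref{thm:main-symmetric} says nothing about such tensors; making it apply would require proving uniqueness for generic tensors in $X_\sym\cap\{\text{block pattern}\}$, which is essentially the original problem in disguise. There is a second issue you do not address: an orthogonal eigenbasis of $\hat\T$ consists of $n_1+\cdots+n_d$ vectors, while you only have $n_1$ singular vector tuples. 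Your ``extension'' step must manufacture the remaining eigenvectors, and conversely an arbitrary orthogonal eigenbasis of $\hat\T$ need not respect the block splitting $\R^{n_1}\oplus\cdots\oplus\R^{n_d}$, so even granting uniqueness for $\hat\T$ you cannot read off uniqueness for $\T$. Finally, note that in the paper the logical order is the reverse of yours: \Cref{thm:main} is proved first and \Cref{thm:main-symmetric} afterwards, by parallel arguments.

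The paper's route is a two-stage direct attack. Stage one handles the binary case $(\R^2)^{\otimes d}$: one shows that for generic $\cS\in V$ and $\Q\in\rO(2)^d$, having $\Q\cdot\cS\in V$ forces $\Q\in\rSP(2)^d$ with $|\bi(\Q)|\in\{0,d\}$ (or $\{0,2,4\}$ when $d=4$). This is done by bounding $\codim((\Q\cdot V)\cap V,V)$ via the rank of an explicit $(2^d-2d)\times 2d$ matrix $M_\Q$ (Lemmas~\ref{lem:MQ}--\ref{lem:generic-codimension}); the key input is that for $\T\in V^\perp\cap(\Q\cdot V^\perp)$ each flattening admits two SVDs, forcing singular-value coincidences that give enough independent quadrics. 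Stage two reduces general formats to binary via the normal form $Q_k=P_kR_kP_k^\top$ with $R_k$ block-diagonal in $2\times 2$ blocks (Lemmas~\ref{lem:reduction}, \ref{lem:reduction-normal-form}): on each $2\times\cdots\times 2$ sub-block the binary result applies. The $(2,2,2,2)$ anomaly is explained conceptually, not via Gr\"obner bases: for $d=4$ the near-diagonal index set $\cI$ is stable under adding any weight-$2$ string, so tuples $\Q$ with $|\bi(\Q)|=2$ preserve $V$, giving exactly three extra bases; Lemma~\ref{lem:reduction-normal-form} then checks these do not survive once some $n_k\ge 3$, so no separate ``neighbouring format'' verification is needed.
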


This paper is organized as follows. We recap basics of tensors in \Cref{sec:notation}. We describe the set of tensors with an orthogonal basis of singular vector tuples as the orbit of a linear space under the action of a product of orthogonal groups in \Cref{sec:orthogonal-singular-vectors}. We specialize to symmetric tensors with an orthogonal basis of eigenvectors in \Cref{sec:orthogonal-eigenvectors}. We prove \Cref{thm:main} in \Cref{sec:proof-uniqueness}. We study binary tensors first and then extend to tensors of general format. 
We prove \Cref{thm:main-symmetric} in \Cref{sec:proof-uniqueness-symmetric}.
We study the varieties of tensors (resp. symmetric tensors) with an orthogonal basis of singular vectors (resp. eigenvectors) in \Cref{sec:varieties}.
We propose a structured Tucker decomposition for tensors with an orthogonal basis of singular vectors or eigenvectors in \Cref{sec:multilinear-svd}. We include numerical experiments in \Cref{sec:numerical}.

\section{Background and notation}\label{sec:notation}

In this section, we briefly introduce standard notation and basics of tensors. 

\noindent\textbf{Tensors.} Let $d \geq 2$ and consider tensors in $\R^{n_1 \times \cdots \times n_d}\cong \R^{n_1} \otimes \cdots \otimes \R^{n_d}$ with $n_k \geq 2$ for all~$k$. Without loss of generality, we assume throughout the paper that the factors have been reordered so that $n_1 \leq \cdots \leq n_d$. We regard tensors as multilinear maps and multidimensional arrays. 
For a positive integer $n$, let $[n]:=\{1,\ldots,n\}$ and let $\{e_1, \dots, e_n\}$ denote the canonical basis of $\R^n$. The inner product of two vectors $u,v \in \R^n$ is $\< u, v\> = \sum_{i=1}^n u_i v_i$, which lets us identify $\R^n$ with its dual $(\R^n)^*$. Hence, $\{e_{i_1} \otimes \cdots \otimes e_{i_d} \mid i_k \in [n_k]\}$ is the canonical basis of~$\R^{n_1} \otimes \cdots \otimes \R^{n_d}$. Denote the coordinates of a tensor $\T \in \R^{n_1 \times \cdots \times n_d}$ in this basis by $\T_{i_1, \dots, i_d} = \T(e_{i_1}, \dots, e_{i_d})$.
A tensor has rank one if it is of the form $x^{(1)} \otimes \cdots \otimes x^{(d)}$ for $x^{(k)} \in \R^{n_k} \setminus \{ 0\}$.
The inner product structure on each $\R^{n_k}$ induces an inner product on $\R^{n_1 \times \cdots \times n_d}$ defined on two rank-one tensors as $\<x^{(1)} \otimes \cdots \otimes x^{(d)}, y^{(1)} \otimes \cdots \otimes y^{(d)}\> = \prod_{k \in [d]} \<x^{(k)}, y^{(k)} \>$ and extended to the whole space by bilinearity.
With this inner product, $\{e_{i_1} \otimes \cdots \otimes e_{i_d} \mid i_k \in [n_k]\}$ is an orthonormal basis of $\R^{n_1 \times \cdots \times n_d}$, so $\T_{i_1, \dots, i_d} = \<\T, e_{i_1} \otimes \cdots \otimes e_{i_d} \>$. Hence, the inner product between two tensors $\cS,\T \in \R^{n_1 \times \cdots \times n_d}$ is $\<\cS, \cT \> = \sum_{i_1 = 1}^{n_1} \cdots \sum_{i_d = 1}^{n_d} \cS_{i_1, \ldots, i_d} \T_{i_1, \ldots, i_d}$,
with the induced (Frobenius) norm $\|\cT\|:=\sqrt{\<\cT,\cT\>}$.
Denote $(\R^n)^{\otimes d} = \R^n \otimes \cdots \otimes \R^n \; (d \text{ times})$ and let $v^{\otimes d} = v \otimes \cdots \otimes v \in (\R^n)^{\otimes d}$ for $v \in \R^n$. We can embed $(\R^{n})^{\otimes d}$ in $\R^{n_1 \times \cdots \times n_d}$ provided that $n \leq n_1 = \min_k n_k$. We use the embedding given by considering the basis $\{ e_{i_1} \otimes \cdots \otimes e_{i_d} \mid i_k \in [n]\}$ of $(\R^n)^{\otimes d}$ as a subset of the basis $\{ e_{i_1} \otimes \cdots \otimes e_{i_d} \mid i_k \in [n_k]\}$ of $\R^{n_1} \otimes \cdots \otimes \R^{n_d}$. Therefore, consider $v^{\otimes d}$ with $v \in \R^{n} \setminus \{ 0\}$ as a rank-one tensor in $\R^{n_1 \times \cdots \times n_d}$.

\medskip

\noindent\textbf{Change of basis.}
For a tuple of matrices $\Q = (Q_1, \dots, Q_d)$ with $Q_k = (q^{(k)}_{i_kj_k}) \in \R^{m_k \times n_k}$ and a tensor $\T \in \R^{n_1 \times \cdots \times n_d}$ let $\Q \cdot \T = (Q_1,\ldots,Q_d)\cdot \T\in \R^{m_1\times \cdots \times m_d}$ be defined as
\[
[(Q_1,\ldots,Q_d)\cdot \T]_{i_1,\ldots,i_d}\;=\;\sum_{j_1=1}^{n_1} \cdots\sum_{j_d=1}^{n_d} q^{(1)}_{i_1 j_1}\cdots q^{(d)}_{i_d j_d} \T_{j_1,\dots, j_d}.
\]
For example, when $d=2$ we have $(Q_1, Q_2) \cdot \T = Q_1 \T Q_2^\top$. 
Note that $(I_{n_1}, \dots, I_{n_d}) \cdot \T = \T$, where $I_n \in \R^{n \times n}$ is the identity matrix. Given two tuples $(Q_1, \dots, Q_d)$ and $(\tilde{Q}_1, \dots, \tilde{Q}_d)$ with $Q_k \in \R^{l_k \times m_k}$ and $\tilde{Q}_k \in \R^{m_k \times n_k}$ we have
\[
 (Q_1, \dots, Q_d) \cdot \big( (\tilde{Q}_1, \dots, \tilde{Q}_d) \cdot \T \big) = (Q_1\tilde{Q}_1, \dots, Q_d\tilde{Q}_d) \cdot \T.
\]
Given a positive integer $n$, $\rO(n) = \{Q \in \R^{n \times n} \mid Q^\top Q = I_n\}$ is the orthogonal group in dimension $n$.
Tuples $\Q = (Q_1, \dots, Q_d) \in \rO(n_1) \times \cdots \times \rO(n_d)$ give a group action of $\rO(n_1) \times \cdots \times \rO(n_d)$ on $\R^{n_1 \times \cdots \times n_d}$. The group action is linear: $\Q \cdot (\lambda \T + \lambda' \T') = \lambda \Q \cdot \T + \lambda' \Q \cdot \T'$, where $\lambda,\lambda' \in \R$. For a rank-one tensor $x = x^{(1)} \otimes \cdots \otimes x^{(d)}$, we have $x_{i_1, \dots, i_d} = x^{(1)}_{i_1} \cdots x^{(d)}_{i_d}$, so $\Q \cdot x = (Q_1 x^{(1)}) \otimes \cdots \otimes (Q_dx^{(d)})$. Hence, the action of a fixed $\Q \in \rO(n_1) \times \cdots \times \rO(n_d)$ on $\R^{n_1 \times \cdots \times n_d}$ corresponds to the orthogonal change of basis from $\{e_{i_1} \otimes \cdots \otimes e_{i_d} \mid i_k \in [n_k]\}$ to $\{q^{(1)}_{i_1} \otimes \cdots \otimes q^{(d)}_{i_d} \mid i_k \in [n_k]\}$, where $q^{(k)}_j$ is the $j$-th column of $Q_k$ for $k \in [d]$ and $j \in [n_k]$.

\section{Tensors with orthogonal singular vectors}\label{sec:orthogonal-singular-vectors} 
The goal of this section is to characterize the tensors in $\R^{n_1\times \cdots \times n_d}$ with an orthogonal basis of singular vector tuples.

\begin{definition}\label{def:SVT}
Given $\T\in \R^{n_1 \times \cdots \times n_d}$, a tuple of vectors $(x^{(1)}, \ldots,  x^{(d)}) \in \R^{n_1} \times \cdots \times \R^{n_d}$ with $\| x^{(k)}\| = 1$ for all $k \in [d]$ is a \emph{singular vector tuple} of $\T$ with \emph{singular value} $\lambda \in \R$ if
\[
\T(x^{(1)}, \dots, x^{(k-1)}, \cdot, x^{(k+1)}, \dots, x^{(d)}) = \lambda x^{(k)} \quad \text{for all } k \in [d].
\]
Two singular vector tuples $x_1=(x^{(1)}_1, \ldots,  x^{(d)}_1)$ and $x_2=(x_2^{(1)}, \ldots,  x_2^{(d)})$ are \emph{orthogonal} if $\langle x_1^{(k)}, x_2^{(k)} \rangle = 0$ for all $k \in [d]$. 
A set of singular vector tuples $\{x_1,\ldots,x_{n_1} \}$ is an \emph{orthogonal basis of singular vector tuples} if $x_i$ and $x_j$ are orthogonal for all $i \neq j$.
\end{definition}

\begin{remark}
    We identify a singular vector tuple $x=(x^{(1)}, \dots, x^{(d)})$ with the rank-one tensor $x^{(1)} \otimes \cdots \otimes x^{(d)}$. Our definition for two singular vector tuples to be orthogonal is more restrictive than them being orthogonal as rank-one tensors: they must be orthogonal on all factors. This generalizes the orthogonality structure in the SVD for matrices.
\end{remark}

The singular vector tuples are the critical points of the function $f(\lambda,x^{(1)},\ldots,x^{(k)})=\|\cT-\lambda x^{(1)}\otimes \cdots\otimes x^{(d)}\|$, where $\lambda\in \R$ and $\|x^{(k)}\|=1$ for all $k$. A minimizer of this function is a best rank-one approximation of $\cT$; see \cite{zhang2001rank,lim2005singular,ribot2024decomposing}.

\begin{proposition}\label{prop:X}
The set of tensors in $\R^{n_1 \times \cdots \times n_d}$ with an orthogonal basis of singular vector tuples is
\[
X \coloneqq (\rO(n_1) \times \cdots \times \rO(n_d) ) \cdot V=\{ \Q \cdot \T \mid \Q\in \rO(n_1) \times \cdots \times \rO(n_d),\,\T\in V\},
\]
where
\[
V  \coloneqq \{ \T \in \R^{n_1 \times \cdots \times n_d}  \mid \T_{i_1,j,j,\dots,j} =\cdots = \T_{j,j,\dots,j,i_d} = 0 \text{ for all } j \in [n_1], i_k \in [n_k]\setminus j\}.
\]
That is, the set $X$ consists of tensors that lie in $V$ after an orthogonal change of basis.
\end{proposition}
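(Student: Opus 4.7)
The plan is to recognize $V$ as the tensors for which a specific orthogonal basis of singular vector tuples is visible in the standard coordinates, and then exploit the covariance of singular vector tuples under the $\rO(n_1)\times\cdots\times\rO(n_d)$-action. First I would unpack the definition of $V$: the vanishing $\T_{j,\ldots,j,i_k,j,\ldots,j}=0$ for every $k\in[d]$ and $i_k\in[n_k]\setminus\{j\}$ says exactly that the vector $\T(e_j,\ldots,e_j,\cdot,e_j,\ldots,e_j)\in\R^{n_k}$ is a scalar multiple of $e_j$, namely $\T_{j,\ldots,j}\,e_j$. Hence $\T\in V$ if and only if for every $j\in[n_1]$ the tuple $(e_j,\ldots,e_j)$ is a singular vector tuple of $\T$ with singular value $\T_{j,\ldots,j}$. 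These $n_1$ tuples are pairwise orthogonal in every factor, so $V$ is precisely the set of tensors for which the standard diagonal tuples form an orthogonal basis of singular vector tuples.

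Next I would verify that the group action transports singular vector tuples in the expected way. Starting from the identity $(\Q\cdot\T)(y^{(1)},\ldots,y^{(d)})=\T(Q_1^\top y^{(1)},\ldots,Q_d^\top y^{(d)})$, a direct index computation shows that $(x^{(1)},\ldots,x^{(d)})$ is a singular vector tuple of $\T$ with singular value $\lambda$ if and only if $(Q_1 x^{(1)},\ldots,Q_d x^{(d)})$ is a singular vector tuple of $\Q\cdot\T$ with the same singular value. Norms and factor-wise orthogonality are preserved since each $Q_k$ is orthogonal. This gives the $\rO$-equivariance of the singular vector tuple structure that is needed for both inclusions.

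Combining these observations yields both directions. For $X\subseteq\{\T:\T\text{ has an orthogonal basis of singular vector tuples}\}$: if $\T=\Q\cdot\T'$ with $\T'\in V$, then the standard diagonal singular vector tuples of $\T'$ are carried by $\Q$ to the orthogonal basis $\{(Q_1 e_j,\ldots,Q_d e_j)\}_{j\in[n_1]}$ of singular vector tuples of $\T$. For the reverse inclusion, suppose $\T$ has an orthogonal basis $\{(x_j^{(1)},\ldots,x_j^{(d)})\}_{j\in[n_1]}$. For each mode $k$, the vectors $x_1^{(k)},\ldots,x_{n_1}^{(k)}\in\R^{n_k}$ are orthonormal; using $n_1\le n_k$, extend them to an orthonormal basis of $\R^{n_k}$ and let $Q_k\in\rO(n_k)$ be the matrix whose columns are this basis. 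Then by equivariance, $(Q_1^\top,\ldots,Q_d^\top)\cdot\T$ has the standard diagonal tuples $(e_j,\ldots,e_j)$, $j\in[n_1]$, as singular vector tuples, so it lies in $V$ by Step~1, and hence $\T\in X$.

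The argument is essentially bookkeeping, so I do not expect a serious obstacle. The only care required is in writing the covariance computation cleanly so it applies uniformly to every mode, and in handling the extension of an orthonormal collection of $n_1$ vectors to a full orthonormal basis of $\R^{n_k}$ when $n_k>n_1$; neither step is delicate, but the latter is the only reason one needs the convention $n_1\le n_k$.
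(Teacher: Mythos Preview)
Your proposal is correct and mirrors the paper's proof almost exactly: the paper isolates your first two steps as Lemmas~\ref{lem:Vcanonical} and~\ref{lem:equivariance-singular-vectors} and then combines them just as you do, including the extension of the $n_1$ orthonormal vectors in each mode to a full orthonormal basis of~$\R^{n_k}$.
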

Before proving this result, we provide some discussion and relevant lemmas.
\begin{example}\label{ex:222}
    For $d=2$, $V$ is the set of diagonal matrices and $X$ equals $\R^{n_1 \times n_2}$, by the SVD. For $2 \times 2 \times 2$ tensors, 
    $V = \{ \cT \in \R^{2\times 2\times 2} \mid \cT_{112} = \cT_{121}  = \cT_{211} = \cT_{122} = \cT_{212} = \cT_{221} = 0\}$
    is the set of diagonal tensors, and $X$ is the set of \emph{odeco} tensors.
\end{example}

The subspace $V$ defined in \Cref{prop:X} plays a central role. It consists of tensors such that the coordinates that differ from a diagonal coordinate in one entry are zero. Formally,
\begin{equation*}
    V = \{ \T \in \R^{n_1 \times \cdots \times n_d} \mid \T_{i_1, \dots, i_d} = 0 \text{ if } \exists\, j \in [n_1] \text{ s.t. } d_H((i_1, \dots, i_d), (j, \dots, j)) = 1 \},
\end{equation*}
where $d_H$ is the Hamming distance on $[n_1] \times \cdots \times [n_d]$ given by
$
d_H(\bi, \bj) = |\{ k \mid i_k \neq j_k\}|
$. Recall that we assume throughout the paper that $n_1\leq \cdots \leq n_d$. The following result shows the relevance of $V$.

\begin{lemma}\label{lem:Vcanonical}
    Let $\T \in \R^{n_1 \times \cdots \times n_d}$. Then, $e_1^{\otimes d},\ldots,e_{n_1}^{\otimes d}$ are singular vector tuples of $\T$ if and only if $\T \in V$.
\end{lemma}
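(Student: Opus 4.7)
The plan is a direct verification by unfolding the singular vector tuple condition evaluated on the tuple $(e_j,\ldots,e_j)$ associated to the rank-one tensor $e_j^{\otimes d}$ for each $j\in[n_1]$. Note that $e_j\in\R^{n_k}$ makes sense for every $k$ precisely because $j\le n_1\le n_k$, which is why the index $j$ is restricted to $[n_1]$ in the statement.

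First I would fix $j\in[n_1]$ and, for each $k\in[d]$, compute the vector
\[
v_k \;\coloneqq\; \T(e_j,\ldots,e_j,\cdot,e_j,\ldots,e_j)\;\in\;\R^{n_k},
\]
with the ``$\cdot$'' in slot $k$. By multilinearity and the coordinate expression $\T_{i_1,\ldots,i_d}=\T(e_{i_1},\ldots,e_{i_d})$, the $i_k$-th entry of $v_k$ is exactly $\T_{j,\ldots,j,i_k,j,\ldots,j}$ (with $i_k$ in slot $k$). The condition that $(e_j,\ldots,e_j)$ is a singular vector tuple of $\T$ is that there exists a single $\lambda_j\in\R$ with $v_k=\lambda_j e_j$ for every $k\in[d]$.

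Reading off this equation coordinatewise gives two classes of constraints. The $i_k=j$ coordinate forces $\T_{j,j,\ldots,j}=\lambda_j$, while each $i_k\in[n_k]\setminus\{j\}$ forces $\T_{j,\ldots,j,i_k,j,\ldots,j}=0$. The first class is consistent across $k\in[d]$ with no extra assumption, since $\T_{j,\ldots,j}$ does not depend on $k$; the value of the common scalar is just $\lambda_j=\T_{j,\ldots,j}$. Ranging over $j\in[n_1]$ and $k\in[d]$, the collection of vanishing conditions obtained is exactly
\[
\T_{i_1,j,j,\ldots,j}=\cdots=\T_{j,j,\ldots,j,i_d}=0\quad\text{for all }j\in[n_1],\ i_k\in[n_k]\setminus\{j\},
\]
which is precisely the defining condition of $V$. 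Both implications are therefore proved at once.

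There is no genuine obstacle: the lemma is essentially an unpacking of the singular vector tuple equations restricted to the canonical basis tuples. The one subtlety to flag in the writeup is that the singular value $\lambda_j$ is required to be the \emph{same} scalar across all slot positions $k$, but this is automatic because each $k$ determines $\lambda_j$ through the slot-independent diagonal entry $\T_{j,\ldots,j}$.
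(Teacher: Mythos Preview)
Your proof is correct and follows essentially the same approach as the paper's own proof: a direct unfolding of the singular vector tuple condition for each $(e_j,\ldots,e_j)$, identifying the resulting coordinate constraints with the defining equations of $V$. Your write-up is slightly more detailed in explicitly noting that the common singular value $\lambda_j$ is automatically consistent across slots, but the argument is otherwise identical.
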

\begin{proof}
    Given $j \in [n_1]$, $e_j^{\otimes d}$ is a singular vector tuple of $\T$ if and only if contracting $\T$ with $e_j$ in all but one factors gives a vector that is parallel to $e_j$.
    Equivalently, $\T(e_j, \dots, e_j, e_{i_k}, e_j, \dots, e_j) = \T_{j, \dots, j, i_k, j \dots, j} = 0$
    for all $k \in [d]$ and all $i_k \in [n_k] \setminus \{j\}$.
    This holds for all $j \in [n_1]$ if and only if $\T \in V$.
\end{proof}

The following result shows that when we act on a tensor with a tuple of orthogonal matrices, the singular vector tuples transform in an equivariant way.

\begin{lemma}[Equivariance]\label{lem:equivariance-singular-vectors} Given $\Q \in\rO(n_1) \times \cdots \times  \rO(n_d)$, the tuple $(x^{(1)} , \dots, x^{(d)})$ is a singular vector tuple of $\T\in \R^{n_1 \times \cdots \times n_d}$ if and only if $(Q_1x^{(1)}, \dots, Q_d x^{(d)})$ is a singular vector tuple of $\Q \cdot \T$. The singular values of $\T$ are equal to the singular values of $ \Q \cdot \T$.
\end{lemma}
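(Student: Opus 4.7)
The plan is a direct calculation from the definition of singular vector tuple together with the coordinate formula for $\Q \cdot \T$. Since each $Q_k \in \rO(n_k)$ is an isometry, the unit-norm condition in Definition~\ref{def:SVT} is preserved under the substitution $x^{(k)} \mapsto Q_k x^{(k)}$, so it remains only to track the contraction equations $\T(x^{(1)}, \dots, \cdot, \dots, x^{(d)}) = \lambda x^{(k)}$.

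First, I would expand the coordinate formula for $\Q \cdot \T$ and reindex the sum to derive the identity
\[
(\Q \cdot \T)(y^{(1)}, \dots, y^{(d)}) \;=\; \T(Q_1^\top y^{(1)}, \dots, Q_d^\top y^{(d)})
\]
for all tuples $y^{(k)} \in \R^{n_k}$, using $\sum_{i_k} q^{(k)}_{i_k j_k} y^{(k)}_{i_k} = (Q_k^\top y^{(k)})_{j_k}$. Specializing $y^{(\ell)} = Q_\ell x^{(\ell)}$ for $\ell \neq k$ and applying $Q_\ell^\top Q_\ell = I_{n_\ell}$, the slot-$k$ contraction becomes
\[
(\Q \cdot \T)(Q_1 x^{(1)}, \dots, \cdot, \dots, Q_d x^{(d)}) \;=\; Q_k\, \T(x^{(1)}, \dots, \cdot, \dots, x^{(d)}),
\]
where $Q_k$ appears on the right because $Q_k^\top$ sits in the remaining $k$-th slot of $\T$ and must be transposed out via the inner-product identification $\langle Q_k^\top w, z\rangle = \langle w, Q_k z\rangle$ to produce a vector rather than a linear functional.

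From this identity the equivalence is immediate: the condition $\T(x^{(1)}, \dots, \cdot, \dots, x^{(d)}) = \lambda x^{(k)}$ transforms into $(\Q \cdot \T)(Q_1 x^{(1)}, \dots, \cdot, \dots, Q_d x^{(d)}) = \lambda Q_k x^{(k)}$ with the same scalar $\lambda$ on both sides, which gives the forward implication together with the preservation of singular values. For the converse I would apply the forward direction to $\Q \cdot \T$ with the tuple $\Q^{-1} = (Q_1^\top, \dots, Q_d^\top) \in \rO(n_1) \times \cdots \times \rO(n_d)$, invoking the composition rule $\Q^{-1} \cdot (\Q \cdot \T) = \T$ recorded in Section~\ref{sec:notation}. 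There is no genuine obstacle; the only delicate point is tracking where $Q_k$ versus $Q_k^\top$ appears when passing between the multilinear form and its vector-valued contractions, so I would write out one slot carefully and invoke symmetry in $k$ for the rest.
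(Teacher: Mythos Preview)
Your proposal is correct and follows essentially the same route as the paper: both arguments verify the slot-$k$ contraction identity $(\Q \cdot \T)(Q_1 x^{(1)}, \dots, \cdot, \dots, Q_d x^{(d)}) = Q_k\,\T(x^{(1)}, \dots, \cdot, \dots, x^{(d)})$ via orthogonality $Q_\ell^\top Q_\ell = I_{n_\ell}$, with the paper phrasing this in the matrix-action notation $((x^{(1)})^\top, \dots, I_{n_k}, \dots) \cdot \T$ rather than the multilinear-form notation you use. The converse in the paper is also handled by the same inverse-tuple argument you sketch.
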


\begin{proof}
    The tuple $(x^{(1)}, \dots, x^{(d)})$ is a singular vector tuple of $\T$ if and only if there exists~$\lambda \in \R$ such that for all $k \in [d]$
    \[
    \left((x^{(1)})^\top, \dots, (x^{(k-1)})^\top, I_{n_k}, (x^{(k+1)})^\top, \ldots, (x^{(d)})^\top \right) \cdot \T = \lambda x^{(k)}.
    \]
    Then for all $k \in [d]$ we have
    \begin{align*}
        &\left((Q_1 x^{(1)})^\top, \dots, (Q_{k-1} x^{(k-1)})^\top, I_{n_k}, (Q_{k+1} x^{(k+1)})^\top, \ldots, (Q_d x^{(d)})^\top \right) \cdot \big( (Q_1, \dots, Q_d) \cdot \T \big)\\
        &=  \left((x^{(1)})^\top, \dots, (x^{(k-1)})^\top, Q_k, (x^{(k+1)})^\top, \ldots, (x^{(d)})^\top \right) \cdot \T \\
        & = 
        Q_k \left(
        \left((x^{(1)})^\top, \dots, (x^{(k-1)})^\top, I_{n_k}, (x^{(k+1)})^\top, \ldots, (x^{(d)})^\top \right) \cdot \T  \right) \\
        &= \lambda Q_k x^{(k)}.
    \end{align*}
    Hence $(Q_1x^{(1)}, \dots, Q_d x^{(d)})$ is a singular vector tuple of $(Q_1 \dots, Q_d) \cdot \T$. The converse implication is analogous and follows by using $Q_{k}^\top Q_k = I_{n_k}$ for all $k \in [d]$. The singular values of $\T$ and $\Q \cdot \T$ coincide by orthogonality of $Q_k$.
\end{proof}

\begin{proof}[Proof of \Cref{prop:X}]
Fix $\cS \in \R^{n_1 \times \cdots \times n_d}$ with an orthogonal basis of singular vector tuples $q_1, \dots, q_{n_1}$, where $q_j = (q_j^{(1)}, \dots, q_j^{(d)})$. For each $k \in [d]$, fix $Q_k \in \rO(n_k)$ such that the first $n_1$ columns of $Q_k$ are $q_1^{(k)}, \dots, q_{n_1}^{(k)}$. Let $\T = (Q_1^\top, \dots, Q_d^\top) \cdot \cS$. Then, $e_1^{\otimes d},\dots, e_{n_1}^{\otimes d}$ are singular vector tuples of $\T$ by \Cref{lem:equivariance-singular-vectors}, so $\T \in V$ by \Cref{lem:Vcanonical}. Therefore, $\cS = (Q_1, \dots, Q_d) \cdot \T \in X$. Similarly, every tensor in $X$ has an orthogonal basis of singular vector tuples, by Lemmas \ref{lem:Vcanonical} and \ref{lem:equivariance-singular-vectors}. 
\end{proof}

By comparison, a tensor $\T \in \R^{n_1 \times \cdots \times n_d}$ is \emph{orthogonally decomposable} (\emph{odeco}, for short) if it has a decomposition
\[
\T = \sum_{j = 1}^{n_1} \lambda_j q_j^{(1)} \otimes \cdots \otimes q_j^{(d)},
\]
where $\lambda_j \in \R$ and $\langle q^{(k)}_i, q^{(k)}_j \rangle = 0$ for all $k \in [d]$ and $i \neq j$.
The set of odeco tensors is~$X_{\mathrm{odeco}} \coloneqq (\rO(n_1) \times \cdots \times \rO(n_d)) \cdot V_{\diag}$, where
$V_{\diag}$  is the set of diagonal tensors
\[
V_{\diag} \coloneqq \{ \T \in \R^{n_1 \times \cdots \times n_d} \mid \T_{i_1, \cdots, i_d} \neq 0 \text{ only if } i_1 = \cdots = i_d\} .
\]
Since $V_{\diag} \subseteq V$, then $X_{\mathrm{odeco}} \subseteq X$: each summand in an odeco decomposition is a singular vector tuple of the tensor, so odeco tensors have an orthogonal basis of singular vector tuples.
Based on \Cref{ex:222}, $V = V_{\diag}$ for $2\times 2 \times 2$ tensors. But this case is special: for all other formats $n_1 \times \cdots \times n_d$ with $d \geq 3$, we have strict containment  $V_{\diag} \subsetneq V$. See \Cref{fig:sparsity-patterns}, where boxes are nonzero entries and diagonal entries are highlighted for visualization. 

\begin{figure}[ht]
    \centering
    \begin{subfigure}{0.3\linewidth}
        \centering
    \includegraphics[width=0.5\linewidth]{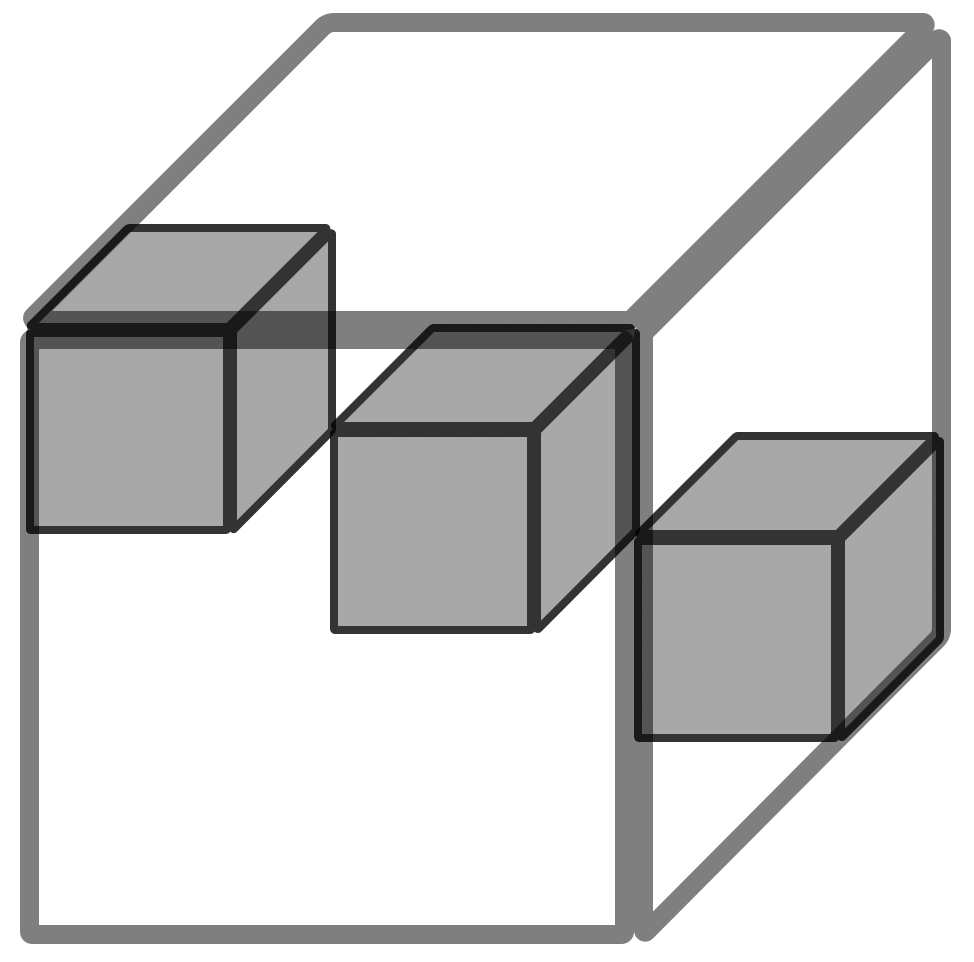}
    \caption{Tensors in $V_{\diag}$}
    \label{fig:sparsity_Vdiag}
    \end{subfigure}
    \begin{subfigure}{0.3\linewidth}
        \centering
    \includegraphics[width=0.5\linewidth]{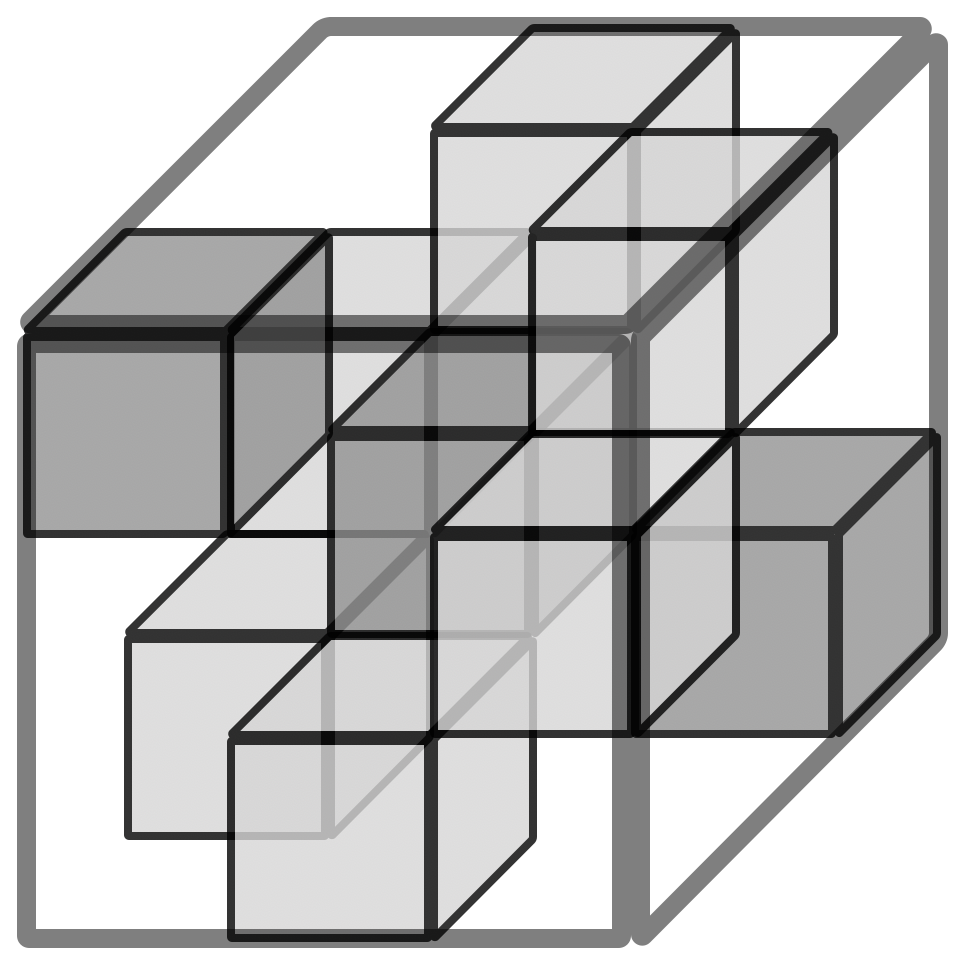}
    \caption{Tensors in $V$}
    \label{fig:sparsity-V}
    \end{subfigure}
    \caption{Tensors in $\R^{3 \times 3 \times 3}$ with different sparsity patterns}
    \label{fig:sparsity-patterns}
\end{figure}

It is well-known that odeco decompositions are unique (up to reordering of the summands).
That is, if $\T \in X_\mathrm{odeco}$ is generic, then it has a decomposition as $\T = \Q \cdot \cS$ with $\Q \in \rO(n_1) \times \cdots \times \rO(n_d)$ and $\cS \in V_{\diag}$ that is unique in the following sense:
$\cS$ and the first $n_1$ columns of $Q_1, \dots, Q_d$ are unique (up to sign and permutation). \Cref{thm:main} generalizes this result to a larger set of tensors, as follows. If $\T \in X$ is generic, then it has a decomposition $\T = \Q \cdot \cS$ with $\Q \in \rO(n_1) \times \cdots \times \rO(n_d)$ and $\cS \in V$ that is unique in the following sense: the leading subtensor of shape $n_1 \times \cdots \times n_1$ of $\cS$ and the first $n_1$ columns of $Q_1, \dots, Q_d$ are unique (up to sign and permutation). See \Cref{sec:multilinear-svd} for details.

\section{Symmetric tensors with orthogonal eigenvectors}\label{sec:orthogonal-eigenvectors}

A tensor $\T \in (\R^n)^{\otimes d}$ is \emph{symmetric} if $\T_{i_1, \dots, i_d} = \T_{i_{\sigma(1)}, \dots, i_{\sigma(d)}}$ for any $1 \leq i_1,\dots,i_d \leq n$ and any permutation $\sigma \in S_d$. In this section, we specialize Section~\ref{sec:orthogonal-singular-vectors} to symmetric tensors. 

Let $S^d(\R^n) \subset (\R^{n})^{\otimes d}$ denote the space of real symmetric $n \times \cdots \times n$ tensors of order $d$. The orthogonal group $\rO(n)$ acts on $S^d(\R^n)$ as follows.
Given $Q \in \rO(n)$ and $\T \in S^d(\R^n)$, we have $Q \bullet \T \coloneqq (Q, \dots, Q) \cdot \T \in S^d(\R^n)$. For symmetric tensors, we replace the notion of singular vector tuples by that of eigenvectors.

\begin{definition} \label{def:eigenvector}
    Given $\T \in S^d(\R^n)$, a unit norm vector $v \in \R^n$ is an \emph{eigenvector} of $\T$ if $v^{\otimes d}$ is a singular vector tuple of $\T$. That is, there exists an \emph{eigenvalue} $\lambda \in \R$ such that
    \[
    \T (\cdot, v, \dots, v) = \lambda v.
    \]
    An \emph{orthogonal basis of eigenvectors} is a set of~$n$ eigenvectors orthogonal to each other.
\end{definition}

The analog of \Cref{prop:X} for symmetric tensors is the following.

\begin{proposition}\label{prop:X-symmetric}
    The set of all symmetric tensors in $S^d(\R^n)$ with an orthogonal basis of eigenvectors is
    \[
    X_\sym \coloneqq \rO(n) \bullet V_\sym = \{ Q \bullet \T \mid Q \in \rO(n), \T \in V_\sym \},
    \]
    where
    \[
    V_\sym \coloneqq \{ \T \in S^d(\R^n) \mid \T_{i,j,\dots,j} = 0 \text{ for all } i \neq j\} = V \cap S^d(\R^n).
    \]
\end{proposition}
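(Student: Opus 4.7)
The plan is to mirror the proof of \Cref{prop:X}, specializing it to the symmetric setting. An eigenvector $v$ of $\T \in S^d(\R^n)$ is by \Cref{def:eigenvector} the same as a singular vector tuple of the form $(v, \dots, v)$. If $v_1, \dots, v_n$ is an orthogonal basis of eigenvectors, then the associated tuples $(v_i, \dots, v_i)$ and $(v_j, \dots, v_j)$ are orthogonal in every factor (since $\langle v_i, v_j \rangle = 0$), so they form an orthogonal basis of singular vector tuples in the sense of \Cref{def:SVT}. Conversely, a symmetric tensor has at least one eigenvector in every singular vector tuple by a standard argument, but here we do not need this: it is enough to work directly with eigenvectors.

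First I would establish the two standard ingredients. The analog of \Cref{lem:Vcanonical} states that $e_1, \dots, e_n$ are eigenvectors of $\T \in S^d(\R^n)$ if and only if $\T \in V_\sym$; this is immediate because $\T(\cdot, e_j, \dots, e_j) = \sum_i \T_{i, j, \dots, j} e_i$ is parallel to $e_j$ precisely when $\T_{i, j, \dots, j} = 0$ for all $i \neq j$. The analog of \Cref{lem:equivariance-singular-vectors} states that for $Q \in \rO(n)$, the vector $v$ is an eigenvector of $\T$ with eigenvalue $\lambda$ if and only if $Qv$ is an eigenvector of $Q \bullet \T$ with the same eigenvalue; this follows from \Cref{lem:equivariance-singular-vectors} applied to the diagonal $(Q, \dots, Q)$, together with the observation that $Q \bullet \T \in S^d(\R^n)$ whenever $\T \in S^d(\R^n)$, so the result is again a genuine symmetric tensor.

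To conclude, suppose $\T \in S^d(\R^n)$ has an orthogonal basis of eigenvectors $v_1, \dots, v_n$, and let $Q \in \rO(n)$ be the orthogonal matrix with columns $v_1, \dots, v_n$. By equivariance, $e_j = Q^\top v_j$ is an eigenvector of $Q^\top \bullet \T$ for every $j$, so $Q^\top \bullet \T \in V_\sym$ by the first lemma, and hence $\T \in \rO(n) \bullet V_\sym$. Conversely, if $\T = Q \bullet \cS$ with $\cS \in V_\sym$, then $e_1, \dots, e_n$ are eigenvectors of $\cS$ and equivariance gives that $Qe_1, \dots, Qe_n$ form an orthogonal basis of eigenvectors of $\T$. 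The only verification that is not completely routine is the identity $V_\sym = V \cap S^d(\R^n)$: for a symmetric tensor, the vanishing of $\T_{i, j, \dots, j}$ for $i \neq j$ forces the vanishing of all Hamming-distance-one coordinates $\T_{j, \dots, j, i, j, \dots, j}$ by permuting indices, so the condition defining $V_\sym$ is equivalent to membership in $V$. This is the only genuine content beyond copying the non-symmetric argument, and it poses no real obstacle.
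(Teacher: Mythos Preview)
Your proposal is correct and follows essentially the same approach as the paper: the paper explicitly says that \Cref{prop:X-symmetric} ``follows from adapting the proof of \Cref{prop:X} to the symmetric case,'' states the symmetric analogs of Lemmas~\ref{lem:Vcanonical} and~\ref{lem:equivariance-singular-vectors}, and leaves the details to the reader. You have filled in exactly those details, including the verification of $V_\sym = V \cap S^d(\R^n)$.
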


\begin{example}
    For $d=2$, $V_\sym$ is the set of diagonal matrices and $X_\sym $ equals $ S^2(\R^n)$, by the spectral theorem.
\end{example}

\Cref{prop:X-symmetric} follows from adapting the proof of \Cref{prop:X} to the symmetric case. Lemmas~\ref{lem:Vcanonical} and \ref{lem:equivariance-singular-vectors} can be specialized to symmetric tensors, and their proofs are analogous. We include the statements for completeness. 

\begin{lemma} \label{lem:Vcanonical-symmetric}
    Let $\T \in S^d(\R^n)$. Then, $e_1, \dots, e_n$ are eigenvectors of $\T$ if and only if $\T \in V_\sym$.
\end{lemma}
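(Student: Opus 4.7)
The plan is to mimic the proof of \Cref{lem:Vcanonical}, but specialized to the symmetric setting where all contraction factors are the same vector. Since a symmetric tensor is in particular a tensor in $\R^{n\times \cdots \times n}$, one route would be to invoke \Cref{lem:Vcanonical} directly: the vectors $e_1,\dots,e_n$ are eigenvectors of $\T\in S^d(\R^n)$ if and only if the tuples $e_j^{\otimes d}$ are singular vector tuples of $\T$ for all $j$, which by \Cref{lem:Vcanonical} is equivalent to $\T \in V$. Intersecting with $S^d(\R^n)$ gives $V_\sym$, and the definitions of $V$ and $V_\sym$ match in this case because symmetry collapses the $d$ conditions $\T_{j,\dots,j,i_k,j,\dots,j}=0$ (one per position $k$) into the single condition $\T_{i,j,\dots,j}=0$ for $i\neq j$.

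Alternatively, I would give the direct argument in one line. Fix $j\in[n]$; the vector $e_j$ is an eigenvector of $\T$ iff $\T(\cdot, e_j,\dots,e_j)$ is a scalar multiple of $e_j$. Expanding coordinatewise, the $i$-th entry of $\T(\cdot,e_j,\dots,e_j)$ is $\T_{i,j,\dots,j}$, so $e_j$ is an eigenvector iff $\T_{i,j,\dots,j}=0$ for every $i\neq j$ (and in that case the eigenvalue is $\T_{j,\dots,j}$). Taking the conjunction over all $j\in[n]$ yields exactly the defining condition of $V_\sym$.

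There is no real obstacle here: the statement is the symmetric analog of \Cref{lem:Vcanonical}, and the symmetry of $\T$ makes the argument even cleaner than in the non-symmetric case, because the $d-1$ different slice conditions arising from contracting $e_j$ in different factors all coincide. I would therefore state the lemma and give a short self-contained proof in the style above, so the reader does not have to translate through \Cref{lem:Vcanonical}.
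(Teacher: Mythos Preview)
Your proposal is correct and matches the paper's treatment: the paper does not write out a separate proof but simply notes that the argument is the symmetric specialization of \Cref{lem:Vcanonical}, which is exactly what you do (in both of your routes). Your direct computation is the intended ``analogous'' proof, and your observation that symmetry collapses the $d$ positional conditions into the single condition $\T_{i,j,\dots,j}=0$ is precisely why $V\cap S^d(\R^n)=V_\sym$.
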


\begin{lemma} \label{lem:equivariance-eigenvectors}
    Given $Q \in \rO(n)$, $v \in \R^n$ is an eigenvector of $\T \in S^d(\R^n)$ if and only if $Qv$ is an eigenvector of $Q \bullet \T$. The eigenvalues of $\T$ are equal to the eigenvalues of $Q\bullet \T$.
\end{lemma}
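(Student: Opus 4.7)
The plan is to obtain this as a direct specialization of \Cref{lem:equivariance-singular-vectors} to the diagonal subgroup of $\rO(n)^d$, translating between the eigenvector and singular vector tuple viewpoints via \Cref{def:eigenvector}.

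First I would unpack \Cref{def:eigenvector}: a unit vector $v$ is an eigenvector of $\T$ with eigenvalue $\lambda$ precisely when the $d$-tuple $(v, v, \dots, v)$, identified with $v^{\otimes d}$, is a singular vector tuple of $\T$ with singular value $\lambda$. Second, I would apply \Cref{lem:equivariance-singular-vectors} to the $d$-tuple of matrices $\Q = (Q, Q, \dots, Q) \in \rO(n)^d$. That result gives that $(v, \dots, v)$ is a singular vector tuple of $\T$ if and only if $(Qv, \dots, Qv)$ is a singular vector tuple of $\Q \cdot \T = Q \bullet \T$, and that the singular value is preserved. Third, since $(Qv, \dots, Qv) = (Qv)^{\otimes d}$, re-invoking \Cref{def:eigenvector} yields that $Qv$ is an eigenvector of $Q \bullet \T$ with eigenvalue $\lambda$, as desired. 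The reverse implication follows by running the same argument starting from $Q \bullet \T$ and applying $Q^\top$, using $Q^\top Q = I_n$.

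For the second claim, that the eigenvalues of $\T$ coincide with the eigenvalues of $Q \bullet \T$, I would observe that $v \mapsto Qv$ is a bijection on the unit sphere of $\R^n$, and the preceding equivalences promote it to a bijection between eigenvector-eigenvalue pairs of $\T$ and of $Q \bullet \T$ that preserves the eigenvalue. Hence the (multi)sets of eigenvalues match.

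I do not foresee any genuine obstacle: the only ingredient beyond \Cref{lem:equivariance-singular-vectors} is the observation that $Q \bullet \T = (Q, \dots, Q) \cdot \T$, which is exactly the definition of the $\rO(n)$-action on $S^d(\R^n)$ given in \Cref{sec:orthogonal-eigenvectors}, together with the tautological identification of a vector $v$ with the diagonal tuple $(v, \dots, v) = v^{\otimes d}$.
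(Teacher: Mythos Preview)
Your proposal is correct and matches the paper's approach exactly: the paper does not give a separate proof of \Cref{lem:equivariance-eigenvectors}, stating only that it is the specialization of \Cref{lem:equivariance-singular-vectors} to symmetric tensors, which is precisely what you do by restricting to the diagonal tuple $\Q=(Q,\dots,Q)$.
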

Symmetric odeco tensors have an orthogonal basis of eigenvectors: the set of symmetric odeco tensors is $X_{\sym - \mathrm{odeco}} \coloneqq \rO(n) \bullet V_{\diag}$ and is contained in $ X_{\sym}$, because $V_{\diag} \subseteq V_{\sym}$.

\section{Uniqueness of orthogonal singular vectors} \label{sec:proof-uniqueness}
In this section, we prove \Cref{thm:main}.
The main idea is to characterize tensors with two different orthogonal bases of singular vector tuples and to show that such tensors are non-generic in $X$; i.e., they are contained in a proper algebraic subvariety. We study binary tensors first and then reduce the analysis of the general case to the binary case using standard properties of normal operators.

Denote the set of signed permutation matrices by ${\rm SP}(n)$; that is, $Q=PD \in \rO(n)$ with~$P$ a permutation matrix and $D$ diagonal with diagonal entries $\pm 1$. If $Q\in {\rm SP}(n)$ then the columns of $Q$ give the canonical basis of~$\R^n$ (up to sign). We reformulate \Cref{thm:main}.

\begin{proposition}\label{prop:main-equivalence}
    \Cref{thm:main} for $(n_1, \ldots, n_d) \neq (2,2,2,2)$ is equivalent to the following statement: Consider a generic $\cS \in V \subset \R^{n_1 \times \cdots \times n_d}$ and let $\Q \in \rO(n_1) \times \cdots \times \rO(n_d)$. Then, $\Q \cdot \cS \in V$ if and only if
    \begin{equation}\label{eq:Qspec}
        Q_1 \in \rSP(n_1) \quad \text{ and } \quad Q_k = \begin{pmatrix}
        Q_1D_k & 0 \\
        0 & \tilde{Q}_k
    \end{pmatrix} \quad \text{for all } k \geq 2,
    \end{equation}
    where $D_k$ is a diagonal matrix with diagonal entries $\pm 1$ and $\tilde{Q}_k \in \rO(n_k - n_1)$.
\end{proposition}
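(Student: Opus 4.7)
The plan is to set up a dictionary between decompositions $\T = \Q \cdot \cS$ with $\cS \in V$ and orthogonal bases of singular vector tuples of $\T$, then transport \Cref{thm:main} through this dictionary.

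By \Cref{lem:Vcanonical} and \Cref{lem:equivariance-singular-vectors} (as in the proof of \Cref{prop:X}), a decomposition $\T = \Q \cdot \cS$ with $\cS \in V$ produces the orthogonal basis $\{q_j\}_{j=1}^{n_1}$ of singular vector tuples of $\T$ with $q_j = (Q_1 e_j, \dots, Q_d e_j)$, and every orthogonal basis of singular vector tuples arises in this form. Applied to $\T = \cS$ with $\cS \in V$, the trivial decomposition yields the canonical basis $\{e_j^{\otimes d}\}_{j=1}^{n_1}$, while the hypothesis $\Q \cdot \cS \in V$ exhibits a second decomposition $\cS = \Q^{-1} \cdot (\Q \cdot \cS)$, giving a second basis $\{(Q_1^\top e_j, \dots, Q_d^\top e_j)\}_{j=1}^{n_1}$ of $\cS$.

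The technical core is the matrix bookkeeping that these two bases coincide up to sign flip and a common reordering if and only if $\Q$ satisfies \eqref{eq:Qspec}. If $\Q$ has that block form, a direct computation gives $Q_k^\top e_j = \pm e_{\pi(j)}$ for $j \in [n_1]$, with a common permutation $\pi$ across all $k \in [d]$ and signs that may vary with $k$, which is precisely sign flip plus reordering. Conversely, if the bases coincide with permutation $\pi$ and signs $\epsilon_{j,k}$, then the first $n_1$ rows of each $Q_k$ are supported on its first $n_1$ columns in a signed-permutation pattern sharing the permutation $\pi$; orthogonality of $Q_k$ then forces the last $n_k - n_1$ rows to be supported on the last $n_k - n_1$ columns, yielding \eqref{eq:Qspec} with top-left block of the form $Q_1 D_k$ for some diagonal sign matrix $D_k$. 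The easy direction of the equivalence (form \eqref{eq:Qspec} $\Rightarrow \Q \cdot \cS \in V$) falls out of this correspondence.

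Finally, the surjective orbit map $(\rO(n_1) \times \cdots \times \rO(n_d)) \times V \to X$ identifies a generic $\cS \in V$ with a generic tensor in $X$, so \Cref{thm:main} applied to such $\T = \cS$ translates, via the dictionary, to: for generic $\cS \in V$, every $\Q$ with $\Q \cdot \cS \in V$ has form \eqref{eq:Qspec}. Conversely, the $\Q$-statement implies \Cref{thm:main} by writing any generic $\T \in X$ as $\T = \Q_0 \cdot \cS$ with generic $\cS \in V$ and applying the same dictionary. The main obstacle is the matrix computation in the third paragraph: verifying that the common permutation $\pi$ across factors, combined with orthogonality of each $Q_k$, suffices to pin down the full block form \eqref{eq:Qspec} rather than a looser signed-permutation structure on the first $n_1$ columns alone.
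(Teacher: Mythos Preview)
Your proposal is correct and follows essentially the same route as the paper. The paper phrases the key bookkeeping step as the equivalence between \eqref{eq:Qspec} and the set equality $\{q_j^{(1)} \otimes \cdots \otimes q_j^{(d)} \mid j \in [n_1]\} = \{\pm e_j^{\otimes d} \mid j \in [n_1]\}$, and then runs the same two implications you outline (transporting genericity between $V$ and $X$ via equivariance, and comparing two decompositions via $\tilde{\Q}^\top\Q$); your version just works with the rows $Q_k^\top e_j$ rather than the columns $q_j^{(k)}$, which is an equivalent formulation since the $Q_k$ are orthogonal.
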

\begin{proof}
Given $\Q = (Q_1, \dots, Q_d) \in \rO(n_1) \times \cdots \times \rO(n_d)$, let $q_j^{(k)}$ denote the $j$-th column of $Q_k$ for $k \in [d]$ and $j \in [n_k]$. Then, \eqref{eq:Qspec} is equivalent to
\begin{equation}\label{eq:Qspec2}
    \{ q_j^{(1)} \otimes \cdots \otimes q_j^{(d)} \mid j \in [n_1] \} \;=\; \{ \pm e_j^{\otimes d} \mid j \in [n_1] \}.    
\end{equation}
Uniqueness of the orthogonal basis of singular vector tuples holds for generic tensors in $V$ if and only if it holds for generic tensors in $X$, since $\cS\in V$ has a unique orthogonal basis if and only if $\Q\cdot \cS$ has a unique orthogonal basis, by Lemma~\ref{lem:equivariance-singular-vectors}.

Let (S1) be the statement of Theorem~\ref{thm:main} and let (S2) be the statement in this proposition. We first show that (S2) implies (S1). Consider a generic $\cT\in X$ and assume (S2) holds.
Suppose that $\T$ has two distinct orthogonal bases of singular vector tuples: $\cT=\Q\cdot \cS=\tilde \Q\cdot \tilde \cS$ for some $\cS,\tilde \cS\in V$ and $\Q,\tilde\Q \in \rO(n_1) \times \cdots \times \rO(n_d)$ such that the first $n_1$ columns of $Q_k$ and $\tilde{Q}_k$ not equal (up to sign flip) for some $k \in [d]$. Since $\tilde{\cS} = \tilde{\Q}^\top \cdot (\Q \cdot \cS)$, we conclude that the columns of $\tilde{Q}_k^\top Q_k$ satisfy \eqref{eq:Qspec2} for all $k \in [d]$, by (S2). Equivalently, the first $n_1$ columns of $Q_k$ and $\tilde{Q}_k$ are equal (up to sign) for all~$k \in [d]$, a contradiction, so (S2) implies (S1).

Suppose (S1) holds. Since $V\subset X$, we can use (S1) to conclude that if $\cS \in V$ is generic in $V$ (and so generic in $X$) then it has a unique orthogonal basis of singular vectors. Since $\cS\in V$ the basis is $\{\pm e_j^{\otimes d}\mid j \in [n_1]\}$. Thus, if $\Q\cdot \cS\in V$ for some~$\Q$, then~$\Q$ satisfies \eqref{eq:Qspec2}.
\end{proof}

\subsection{Binary tensors}

In this section, we prove \Cref{thm:main} for binary tensors; that is, for tensors $\cT \in (\R^2)^{\otimes d}$. 
Denote the canonical basis of $\R^2$ by $\{e_0, e_1\}$.
We prove the case~$\R^{2 \times 2 \times 2}$ by computing the singular vector tuples directly.

\begin{proposition} \label{prop:main-binary-d3}
    \Cref{thm:main} holds for tensors in $\R^{2 \times 2 \times 2}$.
\end{proposition}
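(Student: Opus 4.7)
The plan is to reduce to a canonical tensor by equivariance and then classify the singular vector tuples directly to read off orthogonal bases.

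First, by \Cref{lem:equivariance-singular-vectors} an orthogonal change of basis carries orthogonal bases of singular vector tuples to orthogonal bases of singular vector tuples, so I may assume $\cT \in V \subset \R^{2\times 2\times 2}$. By \Cref{ex:222}, in this binary order-three format $V = V_{\diag}$, so $\cT = \lambda_1 e_1^{\otimes 3} + \lambda_2 e_2^{\otimes 3}$ for some generic $\lambda_1, \lambda_2 \in \R$. It suffices to show that the only orthogonal basis of singular vector tuples of this $\cT$, up to sign flip, is $\{e_1^{\otimes 3}, e_2^{\otimes 3}\}$.

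Next, I write a candidate singular vector tuple as $(x,y,z) \in (\R^2)^3$ in coordinates $(x_i, y_i, z_i)$ for $i=1,2$. Because $\cT$ is diagonal, the defining equations decouple coordinatewise into two independent systems
\[
\lambda_i y_i z_i = \sigma x_i, \qquad \lambda_i x_i z_i = \sigma y_i, \qquad \lambda_i x_i y_i = \sigma z_i, \qquad i=1,2.
\]
Multiplying and dividing these relations shows, for generic $\sigma \neq 0$, that each coordinate system has only two possibilities: either $x_i = y_i = z_i = 0$, or all three are nonzero with $|x_i| = |y_i| = |z_i|$ and $\sigma = \pm \lambda_i x_i$ (the sign depending on $y_i/x_i, z_i/x_i \in \{\pm 1\}$). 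Combined with the unit-norm condition, this yields a complete classification of singular vector tuples as either \emph{axis tuples} $(\epsilon_1 e_j, \epsilon_2 e_j, \epsilon_3 e_j)$ with $j \in \{1,2\}$ and $\epsilon_k \in \{\pm 1\}$, or \emph{mixed tuples} in which all six coordinates are nonzero, forcing $x_2/x_1 = \pm \lambda_1/\lambda_2$.

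Finally, I examine componentwise orthogonality. A mixed tuple has both coordinates of $x$ nonzero, so it cannot be componentwise orthogonal to an axis tuple (whose $x$-slot has a zero coordinate). Two axis tuples are componentwise orthogonal precisely when they live on different axes, producing $\{e_1^{\otimes 3}, e_2^{\otimes 3}\}$ together with its sign flips. For two mixed tuples with $x_2/x_1, x_2'/x_1' \in \{\pm \lambda_1/\lambda_2\}$, the relation $\langle x, x'\rangle = 0$ becomes $(x_2/x_1)(x_2'/x_1') = -1$, i.e.\ $\pm (\lambda_1/\lambda_2)^2 = -1$, which has no real solution unless $\lambda_1^2 = \lambda_2^2$; this is a non-generic condition. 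Uniqueness follows. The only mildly delicate step is tracking the signs $\epsilon_k$ in the decoupled systems, but this is a finite enumeration rather than a genuine obstacle.
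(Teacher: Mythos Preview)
Your argument is correct and follows the same route as the paper: reduce via equivariance to a diagonal $\cT = \lambda_1 e_1^{\otimes 3} + \lambda_2 e_2^{\otimes 3}$, classify all singular vector tuples explicitly, and verify that only the canonical pair is componentwise orthogonal. The paper lists the six singular vector tuples via $2\times 2$ determinant conditions while you organize them through an axis/mixed dichotomy, but this is a cosmetic difference rather than a different approach.
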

\begin{proof}
For $2\times 2\times 2$ tensors the subspace $V$ consists of diagonal tensors. The orbit $X = \rO(2)^{3} \cdot V$ is the odeco tensors. Generic odeco tensors have a unique basis of orthogonal singular vector tuples, as follows: $u \otimes v \otimes w$ is a singular vector tuple of $\T =  \lambda_0 e_0^{\otimes 3} + \lambda_1 e_1^{\otimes 3}$~if
\[
\det \begin{pmatrix}
        \lambda_0 v_0w_0 & u_0 \\
        \lambda_1 v_1 w_1 & u_1 
    \end{pmatrix} = \det  \begin{pmatrix}
        \lambda_0u_0w_0 & v_0 \\ 
        \lambda_1 u_1w_1 & v_1
    \end{pmatrix} = \det  \begin{pmatrix}
        \lambda_0 v_0u_0 & w_0 \\ 
        \lambda_1 u_1v_1 & w_1
    \end{pmatrix} = 0.
\]
Hence, for nonzero $\lambda_0, \lambda_1 \in \R$, the tensor $\T$ has six singular vector tuples (up to scaling):
\[
\arraycolsep=0.3pt\def\arraystretch{0.8}
\begin{array}{ccc}
    \begin{pmatrix}
        1 \\ 0
    \end{pmatrix}
    \otimes 
    \begin{pmatrix}
        1 \\ 0
    \end{pmatrix}
    \otimes 
    \begin{pmatrix}
        1 \\ 0
    \end{pmatrix}, 
    & 
    \begin{pmatrix}
        0 \\ 1
    \end{pmatrix}
    \otimes 
    \begin{pmatrix}
        0 \\ 1
    \end{pmatrix}
    \otimes 
    \begin{pmatrix}
        0 \\ 1
    \end{pmatrix},
    &
    \begin{pmatrix}
        \lambda_1 \\ \lambda_0
    \end{pmatrix}
    \otimes 
    \begin{pmatrix}
        \lambda_1 \\ \lambda_0
    \end{pmatrix}
    \otimes 
    \begin{pmatrix}
        \lambda_1 \\ \lambda_0
    \end{pmatrix},
    \\[1em]
    \begin{pmatrix}
        \lambda_1 \\ \lambda_0
    \end{pmatrix}
    \otimes 
    \begin{pmatrix}
        \lambda_1 \\ -\lambda_0
    \end{pmatrix}
    \otimes 
    \begin{pmatrix}
        \lambda_1 \\ -\lambda_0
    \end{pmatrix},
    & 
    \begin{pmatrix}
        \lambda_1 \\ -\lambda_0
    \end{pmatrix}
    \otimes 
    \begin{pmatrix}
        \lambda_1 \\ \lambda_0
    \end{pmatrix}
    \otimes 
    \begin{pmatrix}
        \lambda_1 \\ -\lambda_0
    \end{pmatrix},
    &
    \begin{pmatrix}
        \lambda_1 \\ -\lambda_0
    \end{pmatrix}
    \otimes 
    \begin{pmatrix}
        \lambda_1 \\ -\lambda_0
    \end{pmatrix}
    \otimes 
    \begin{pmatrix}
        \lambda_1 \\ \lambda_0
    \end{pmatrix}.
\end{array} 
\]
The only pair orthogonal to each other are $e_0^{\otimes 3}$ and $e_1^{\otimes 3}$.
\end{proof}

We now assume $d \geq 4$. First, we give a more precise statement of the uniqueness of the orthogonal singular vectors for binary tensors of order $4$. This explains why this curious case is ruled out in \Cref{thm:main}.
The proof appears later in this section.

\begin{theorem} \label{thm:main-2222}
    For $ \T \in \R^{2 \times 2 \times 2 \times 2}$, if $\{x_0^{(1)} \otimes x_0^{(2)} \otimes x_0^{(3)} \otimes x_0^{(4)} , x_1^{(1)} \otimes x_1^{(2)} \otimes x_1^{(3)} \otimes x_1^{(4)}\}$ is an orthogonal basis of singular vector tuples of $\T$, then so are
    \begin{align*}
        &\{x_0^{(1)} \otimes x_0^{(2)} \otimes x_1^{(3)} \otimes x_1^{(4)} , x_1^{(1)} \otimes x_1^{(2)} \otimes x_0^{(3)} \otimes x_0^{(4)}\}, \\
        &\{x_0^{(1)} \otimes x_1^{(2)} \otimes x_0^{(3)} \otimes x_1^{(4)} , x_1^{(1)} \otimes x_0^{(2)} \otimes x_1^{(3)} \otimes x_0^{(4)}\}, \\
        &\{x_0^{(1)} \otimes x_1^{(2)} \otimes x_1^{(3)} \otimes x_0^{(4)} , x_1^{(1)} \otimes x_0^{(2)} \otimes x_0^{(3)} \otimes x_1^{(4)}\}.
    \end{align*}
    Generically, these are the only orthogonal bases of singular vector tuples (up to sign flip).
\end{theorem}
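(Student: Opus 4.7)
The proof splits into \emph{existence} of the four bases and \emph{uniqueness} for generic $\T$. By Lemma~\ref{lem:equivariance-singular-vectors} I may replace $\T$ with $\cS \in V$ for which the first listed basis becomes the canonical basis $\{e_0^{\otimes 4}, e_1^{\otimes 4}\}$.

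\textbf{Existence.} For every $\cS \in V$, each rank-one tensor $e_{i_1} \otimes e_{i_2} \otimes e_{i_3} \otimes e_{i_4}$ with $i_1+i_2+i_3+i_4$ even is a singular vector tuple of $\cS$: contracting $\cS$ against $e_{i_2}, e_{i_3}, e_{i_4}$ in the last three factors gives a vector in $\R^2$ whose $(1-i_1)$th coordinate is $\cS_{1-i_1, i_2, i_3, i_4}$, an entry at Hamming distance one from either $(0,0,0,0)$ or $(1,1,1,1)$ (the parity of the weight flips), hence zero by the definition of $V$; the other three contractions are analogous. The eight such tuples pair into four complementary orthogonal bases, which transport by $\Q$ to the four bases in the theorem.

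\textbf{Uniqueness setup.} By Lemmas~\ref{lem:Vcanonical} and~\ref{lem:equivariance-singular-vectors}, orthogonal bases of singular vector tuples of $\cS$ correspond bijectively to elements of $\{Q \in \rO(2)^4 : Q \cdot \cS \in V\}$, modulo an equivalence subgroup $E \leq \rO(2)^4$ of order $|E| = 2 \cdot 4^4 = 512$ generated by right-multiplication by column sign flips $(\diag(\pm 1, \pm 1))_k$ and by a simultaneous swap of the two columns in every factor (the factor of $2$ corresponds to the two orderings of the basis pair). Let $\Gamma \subset \rSP(2)^4$ be the subset of $4$-tuples of signed-permutation matrices with an even number of off-diagonal (``swap'') factors; counting gives $|\Gamma| = 2^3 \cdot 4^4 = 2048$ (eight even-size subsets of $\{1,2,3,4\}$ times $4^4$ sign choices). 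The theorem then reduces to
\[
\{Q \in \rO(2)^4 : Q \cdot \cS \in V\} = \Gamma \quad\text{for generic }\cS \in V,
\]
which yields exactly $|\Gamma|/|E| = 4$ orthogonal bases.

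\textbf{Key argument and main obstacle.} I first check that $\Gamma$ equals the set-wise stabilizer of $V$ in $\rO(2)^4$. The inclusion $\Gamma \subseteq \mathrm{Stab}(V)$ is direct, as even-swap-count tuples permute the index set $\{0,1\}^4$ by bijections preserving the parity of Hamming weight. For the reverse, $Q \in \mathrm{Stab}(V)$ forces $Q \cdot e_0^{\otimes 4} = q_0^{(1)} \otimes \cdots \otimes q_0^{(4)} \in V$ to have its unique nonzero coordinate at an even-weight index, so each first column $q_0^{(k)} \in \{\pm e_0, \pm e_1\}$ with an even count equal to $\pm e_1$; applying the same argument to $e_1^{\otimes 4}$ fixes the second columns, and $Q \in \Gamma$. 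To promote set-wise stabilization to the generic fiber, consider the algebraic map $\Psi : \rO(2)^4 \to \mathrm{Hom}(V, V^\perp)$ defined by $\Psi(Q)(\cS) = $ odd-weight part of $Q \cdot \cS$, so $V_Q := \ker \Psi(Q) = \{\cS \in V : Q \cdot \cS \in V\}$ and $\Psi^{-1}(0) = \Gamma$. A direct computation of the derivative $d\Psi|_I$ with $\mathfrak{o}(2)^4$-infinitesimal rotations shows it has full rank $8$, so $\Psi$ is transverse to the rank stratification of $\mathrm{Hom}(V, V^\perp)$ near $\Gamma$. Under this transversality, the locus $\{Q : \rank \Psi(Q) \leq r\}$ has dimension at most $4 - (8-r)^2$, and only the strata $r = 7$ (dim $\leq 3$, $\dim V_Q \leq 1$) and $r = 6$ (dim $0$, $\dim V_Q \leq 2$) contribute (other nonzero $r < 8$ strata being empty on the 4-dimensional $\rO(2)^4$), so the union $\bigcup_{Q \notin \Gamma} V_Q$ in $V$ has dimension at most $4 < 8 = \dim V$. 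The main obstacle is proving global transversality of $\Psi$ to the rank stratification away from $\Gamma$; the cleanest backup is to exhibit a specific $\cS_0 \in V$ with eight algebraically independent entries and verify by Gr\"obner basis elimination on the $8$ polynomial equations $(Q \cdot \cS_0)_{\text{odd-weight}} = 0$ in $Q \in \rO(2)^4$ that the solution set has exactly $2048$ points, then invoke upper semi-continuity of fiber size.
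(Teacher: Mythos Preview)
Your existence argument is correct and essentially matches the paper's observation (encapsulated in Lemma~\ref{lem:dim-QinSP}, case $|\bi(\Q)|\in\{0,2,4\}$): for $d=4$ the index set $\cI$ of odd-weight strings is closed under adding any even-weight string, so even-weight basis tensors are singular vector tuples of every $\cS\in V$.

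The uniqueness argument, however, has a genuine gap at exactly the point you flag. First, the sentence ``$d\Psi|_I$ has full rank $8$'' cannot be right as stated: $d\Psi|_I$ is a linear map from the $4$-dimensional Lie algebra $\mathfrak{o}(2)^4$ into $\mathrm{Hom}(V,V^\perp)\cong\R^{64}$, so its rank is at most $4$. If you meant that the element $d\Psi|_I(\xi)\in\mathrm{Hom}(V,V^\perp)$ is an invertible $8\times 8$ matrix for generic $\xi$, that is a statement about the \emph{value} of $\Psi$ at nearby points, not about transversality. Either way, a computation at $Q\in\Gamma$, where $\Psi(Q)=0$, tells you nothing about the rank of $\Psi(Q)$ for $Q$ away from $\Gamma$; your dimension bound $\dim\{Q:\rank\Psi(Q)\le r\}\le 4-(8-r)^2$ would require transversality of $\Psi$ to the determinantal stratification \emph{globally} on $\rO(2)^4$, and nothing in the proposal establishes that. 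The Gr\"obner-basis backup on a single $\cS_0$ would certify the result but is a computation, not the argument you sketched.

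The paper sidesteps transversality entirely. Its Lemma~\ref{lem:codimension-not-in-SP} gives, for \emph{every} $\Q$ with exactly $r>0$ factors outside $\rSP(2)$, the uniform bound $\codim((\Q\cdot V)\cap V,V)\ge r$; the proof reads off an SVD of each principal flattening of tensors in $V^\perp$ and shows that a non-signed-permutation factor forces two singular values to coincide, yielding $r$ independent quadratic constraints. Lemma~\ref{lem:generic-codimension} then uses lower semicontinuity of rank (with $\rSP(2)^4$ in the closure of each stratum $W_r$) to show the generic codimension on $W_r$ is the full $2d=8$, so $\dim(\{Q\in W_r:\rank M_\Q<8\})<r$. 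Summing over $r$ gives positive codimension of $\bigcup_{\Q\notin\rSP(2)^4}(\Q\cdot V)\cap V$ in $V$; the $\rSP(2)^4$ part is handled by Lemma~\ref{lem:dim-QinSP}. In your language, this is precisely the global control of $\{Q:\rank\Psi(Q)\le r\}$ that your transversality step was missing.

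A minor point: your proof that $\Psi^{-1}(0)=\Gamma$ asserts that a rank-one tensor in $V$ has a ``unique nonzero coordinate'' without justification. The claim is true (if some $q_0^{(k)}$ had both coordinates nonzero, you would produce a nonzero odd-weight entry), but the argument as written reads the conclusion as a premise.
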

\noindent We postpone the proof of this result until the end of this subsection.

The coordinates of a binary tensor $\T$ are indexed by binary strings of length $d$. Given $\bi = (i_1, \dots, i_d) \in \{0,1\}^{d}$, let $|\bi| = \sum_{k} i_k$.  
Let $\cI \coloneqq \{ \bi \in \{ 0,1\}^d \mid |\bi| \in \{1,d-1\}\}$.
Then 
\[
V = \{ \cT \in (\R^2)^{\otimes d} \mid \cT_{\bi} = 0 \text{ if } \bi \in \cI \}.
\]
Given a fixed tuple $\Q \in \rO(2)^{d}$, we consider the linear space $\Q \cdot V \coloneqq \{ \Q \cdot \T \mid \T \in V\}$. 
We characterize when $\T \in V$ and $\Q \cdot \T \in V$, motivated by \Cref{prop:main-equivalence}.

The set $V$ has dimension $2^d - 2d$ in $(\R^2)^{\otimes d}$. 
We analyze its orthogonal complement \[
V^\perp = \{ \cT \in (\R^2)^{\otimes d} \mid \cT_{\bi} = 0 \text{ if } \bi \notin \cI \},
\]
which is smaller since
$2^{d}-2d \geq 2d$ for $d \geq 4$. 
The map $\T\mapsto \Q\cdot \T$ has coordinates
 \[
    [\Q \cdot \T]_\bi = \sum_{\bj \in \{ 0, 1\}^d } q^{(1)}_{i_1 j_1} \cdots q^{(d)}_{i_d j_d} \T_{\bj},
    \]
    where $\bi = (i_1, \ldots, i_d)$  and $\bj = (j_1, \ldots, j_d)$.
Vectorizing $\T$, the map can be represented by a $2^d\times 2^d$ matrix with $(\bi, \bj)$ entry 
$q^{(1)}_{i_1 j_1} \cdots q^{(d)}_{i_d j_d}$.
We define the $(2^d - 2d) \times 2d$ submatrix $M_\Q$ with 
rows indexed by $\{0,1\}^{d} \setminus \cI$ and columns indexed by $\cI$:
    \[
    [M_\Q]_{(\bi, \bj)} = q^{(1)}_{i_1 j_1} \cdots q^{(d)}_{i_d j_d}.
    \]

\begin{example}\label{ex:MQd4}
Let $d=4$ and $\Q = (P, I, I, I)$, where $I \coloneqq \begin{pmatrix}
        1 & 0 \\ 0 & 1
    \end{pmatrix} 
    \text{ and } P \coloneqq \begin{pmatrix}
        0 & 1 \\ 1 & 0
    \end{pmatrix}.$ Then,
    \[
    M_{\Q} = 
\begin{blockarray}{ccccccccc}
    & 0001 & 0010 & 0100 & 0111 & 1000 & 1011 & 1101 & 1110 \\
    \begin{block}{c(cccccccc)}
      0000 & 0&0&0&0&1&0&0&0\\
      0011 & 0&0&0&0&0&1&0&0\\
      0101 & 0&0&0&0&0&0&1&0\\
      0110 & 0&0&0&0&0&0&0&1\\
      1001 & 1&0&0&0&0&0&0&0\\
      1010 & 0&1&0&0&0&0&0&0\\
      1100 & 0&0&1&0&0&0&0&0\\
      1111 & 0&0&0&1&0&0&0&0\\
    \end{block}
    \end{blockarray}
    \;.
    \]
In contrast, if $\Q = (I, I, I, I)$ or $\Q = (P, P, P, P)$, then $M_\Q \in \R^{8 \times 8}$ is the zero matrix.
\end{example}

The idea of the proof of \Cref{thm:main} for binary tensors is to study the set of tensors with more than one orthogonal basis of singular vector tuples. To show that a generic tensor in $V$ has a unique basis of singular vectors, we show that the codimention of $(\Q \cdot V) \cap V$ in $V$, denoted $\codim((\Q \cdot V) \cap V, V)$,
is large enough when $\Q$ corresponds to an orthogonal basis of singular vector tuples different from $\{e_0^{\otimes d}, e_1^{\otimes d} \}$.
This codimension can be found from $M_\Q$.

\begin{lemma} \label{lem:MQ}
     Let $\Q = (Q_1, \dots, Q_d)\in \rO(2)^{d}$. Then,
     \[
     \codim((\Q \cdot V) \cap V, V) =  \codim((\Q \cdot V^\perp) \cap V^\perp, V^\perp)=\rank(M_\Q).
     \]
\end{lemma}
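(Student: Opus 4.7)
The plan is to view $\Phi_\Q : \cT \mapsto \Q \cdot \cT$ as a linear isomorphism of $(\R^2)^{\otimes d}$, which is in fact an orthogonal transformation since each $Q_k$ is, and to work with the orthogonal decomposition $(\R^2)^{\otimes d} = V \oplus V^\perp$. Relative to this decomposition, $\Phi_\Q$ has a $2 \times 2$ block structure, and from the coordinate formula for $\Q \cdot \cT$ one reads off directly that $M_\Q$ is the matrix of the off-diagonal block $\pi_V \circ \Phi_\Q|_{V^\perp} : V^\perp \to V$, where $\pi_V$ is orthogonal projection onto $V$.

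Next I would convert the intersections into kernels. For $\cT \in V^\perp$, the element $\Q \cdot \cT$ lies in $V^\perp$ if and only if $\pi_V(\Q \cdot \cT) = M_\Q \cT = 0$. Since $\Phi_\Q$ is an isomorphism, this yields $(\Q \cdot V^\perp) \cap V^\perp = \Phi_\Q(\ker M_\Q)$, a space of dimension $\dim V^\perp - \rank M_\Q$, so
\[
\codim((\Q \cdot V^\perp) \cap V^\perp,\, V^\perp) \;=\; \rank M_\Q.
\]
This handles one of the two equalities immediately.

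For the remaining equality I would exploit orthogonality of $\Phi_\Q$ to identify $\Phi_\Q(V)^\perp$ with $\Phi_\Q(V^\perp)$, giving
\[
(\Phi_\Q(V) + V)^\perp \;=\; \Phi_\Q(V)^\perp \cap V^\perp \;=\; \Phi_\Q(V^\perp) \cap V^\perp.
\]
Combining this with Grassmann's formula $\dim(\Phi_\Q(V) + V) + \dim(\Phi_\Q(V) \cap V) = 2 \dim V$ and the identity $\dim V + \dim V^\perp = 2^d$ yields, after a short bookkeeping step, the symmetric relation
\[
\codim((\Q \cdot V) \cap V,\, V) \;=\; \codim((\Q \cdot V^\perp) \cap V^\perp,\, V^\perp),
\]
which chains with the previous identity to finish the argument.

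I do not expect a substantive obstacle. The only point requiring care is that $\dim V \neq \dim V^\perp$ in general, so the equality of the two codimensions is not merely an equality of dimensions; rather, it reflects the fact that orthogonality of $\Phi_\Q$ forces the two off-diagonal blocks $\pi_V \circ \Phi_\Q|_{V^\perp}$ and $\pi_{V^\perp} \circ \Phi_\Q|_{V}$ to have the same rank. Once that symmetry is in hand, the remainder is elementary dimension counting.
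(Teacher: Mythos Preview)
Your proposal is correct and follows essentially the same route as the paper: both use that $\Phi_\Q$ is orthogonal (so $(\Q\cdot V)^\perp=\Q\cdot V^\perp$), apply Grassmann's identity to equate the two codimensions, and then read off $\dim((\Q\cdot V^\perp)\cap V^\perp)=\dim\ker M_\Q$ from the coordinate description of $\Q\cdot\T$ to conclude via rank--nullity. Your framing of $M_\Q$ as the off-diagonal block $\pi_V\circ\Phi_\Q|_{V^\perp}$ is a clean way to say exactly what the paper computes in coordinates.
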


\begin{proof}
    First, we show that $ \codim((\Q \cdot V) \cap V, V) =  \codim((\Q \cdot V^\perp)\cap V^\perp, V^\perp)$.  Let $U=\Q \cdot V$. Then $U^\perp= \Q \cdot V^\perp$, as orthogonal $\Q$ preserve the inner product.  Note that~$\dim(U) = \dim(V)$ and $\dim(U^\perp) = \dim(V^\perp)$. Using Grassmann's identity we get
    \begin{align*}
        \codim(U \cap V, V) &= \codim(V^\perp, (U \cap V)^\perp) = \dim((U \cap V)^\perp) - \dim(V^\perp)\\
        &= \dim(V^\perp + U^\perp) - \dim(V^\perp) = \dim(U^\perp) - \dim (U^\perp \cap V^\perp)\\
        &= \dim(V^\perp) - \dim (U^\perp \cap V^\perp) = \codim(U^\perp \cap V^\perp, V^\perp).
    \end{align*}
The kernel of $M_\Q$ says how many new restrictions are imposed on $\T \in V^\perp$ by also imposing~$\Q \cdot \T \in V^\perp$, as follows.
For $\T \in V^\perp$ we have $\T_{\bj} \neq 0$ only if $\bj \in \cI$, so
    \[
    [\Q \cdot \T]_\bi = \sum_{\bj \in \cI} q^{(1)}_{i_1 j_1} \cdots q^{(d)}_{i_d j_d} \T_{j_1 \dots j_d}.
    \]
    Imposing $\Q \cdot \T \in V^\perp$ implies $[\Q \cdot \T]_\bi = 0$ for all $\bi \notin \cI$. This is a system of $2^d - 2d$ linear equations in the variables $\{\T_\bi \mid \bi \in \cI\}$. The matrix corresponding to this linear system is $M_\Q$, so $\Q \cdot \T \in V^\perp$ if and only if $(\T_\bi \mid \bi \in \cI) \in \ker M_\Q$. Hence, 
    $\dim ((\Q \cdot V^\perp) \cap V^\perp) = \dim(\ker M_\Q)$. Finally, $\codim((\Q \cdot V^\perp) \cap V^\perp, V^\perp) = \rank(M_\Q)$ by the rank-nullity theorem.
\end{proof}

We move from a fixed tuple $\Q$ to study the codimension as we vary over all tuples $\Q \in \rO(2)^d$ that define a new orthogonal basis of singular vectors.  First, we assume that~$\Q$
does not consist of signed permutation matrices. 
We show that
\begin{equation}\label{eq:positive-codimension}
\codim \left(\bigcup_{\Q \in \rO(2)^{d}\setminus {\rm SP}(2)^d} (\Q \cdot V) \cap V, V \right) >0.
\end{equation} 
Our arguments rely on the following results that reveal the structure of this problem.
\begin{lemma} \label{lem:codimension-not-in-SP}
    If $\Q = (Q_1, \dots, Q_d) \in \rO(2)^{d}$ with $d \geq 4$ and there are exactly $r > 0$ matrices~$Q_k$ such that $Q_k \notin \rSP(2)$, then $\codim((\Q \cdot V) \cap V, V) \geq r$.
\end{lemma}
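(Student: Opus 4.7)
The plan is to apply \Cref{lem:MQ} and exhibit a submatrix of $M_\Q$ of rank at least $r$. After reordering, assume $Q_1,\ldots,Q_r\notin\rSP(2)$ and $Q_{r+1},\ldots,Q_d\in\rSP(2)$. Since any $Q\in\rO(2)$ with a zero entry lies in $\rSP(2)$, every entry of $Q_k$ is nonzero for $k\leq r$. For $k>r$, let $\epsilon_k\in\{0,1\}$ record whether $Q_k$ is a signed diagonal or signed antidiagonal matrix, and set $\epsilon=(\epsilon_{r+1},\ldots,\epsilon_d)$, $s=|\epsilon|$.

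The main building block is the submatrix $N$ of $M_\Q$ whose columns are indexed by $\bj^{(k)}\in\cI$ for $k\in[r]$ (the weight-one strings with a $1$ in coordinate $k$) and whose rows have the form $\bi=(i_1,\ldots,i_r,\epsilon_{r+1},\ldots,\epsilon_d)$. On such a row the entry in column $\bj^{(k)}$ equals $q^{(k)}_{i_k,1}\prod_{k'\neq k}q^{(k')}_{i_{k'},0}$, where the factors with $k'>r$ are $\pm 1$. Dividing each row by the nonzero product $\prod_{k'\leq r}q^{(k')}_{i_{k'},0}\cdot\prod_{k'>r}q^{(k')}_{\epsilon_{k'},0}$, the entry in column $\bj^{(k)}$ becomes $r_k(i_k):=q^{(k)}_{i_k,1}/q^{(k)}_{i_k,0}=\alpha_k+i_k(\beta_k-\alpha_k)$, where a short use of the relations $q^{(k)}_{0,0}q^{(k)}_{0,1}+q^{(k)}_{1,0}q^{(k)}_{1,1}=0$ and $\det Q_k=\pm 1$ gives $\beta_k-\alpha_k=\det(Q_k)/(q^{(k)}_{0,0}q^{(k)}_{1,0})\neq 0$. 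Any linear dependence among the $r$ columns is then an affine function $A+\sum_k C_k i_k$ vanishing on $\{0,1\}^r$, which forces $A=C_k=0$; so over all $2^r$ choices of $(i_1,\ldots,i_r)$ the matrix has rank $r$.

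The subtlety is that the rows $\bi$ with $|\bi|\in\{1,d-1\}$, i.e.\ with $t:=\sum_{k\leq r}i_k\in T_s:=\{1-s,d-1-s\}\cap\{0,\ldots,r\}$, are not valid rows of $M_\Q$ and must be discarded; note $|T_s|\leq 2$. For $r\geq 3$ (which includes $r=d$), the same affine-function argument restricted to $\{\bi:t(\bi)\notin T_s\}$ still forces vanishing: since $d\geq 4$ prevents $\{1,2\}\subseteq T_s$ (otherwise $d=3$), at least one of $t=1,2$ is available and pins down the $C_k$ up to a common scalar, and $|\{0,\ldots,r\}\setminus T_s|\geq r-1\geq 2$ supplies a further value of $t$ that eliminates this scalar. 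For $r=1$ the single column of $N$ already has a nonzero entry, since for $d\geq 4$ the two values $i_1=0,1$ cannot both give $|\bi|\in\{1,d-1\}$. For $r=2$ (necessarily $r<d$), if the $2\times 2$ block from $N$ drops to rank $1$ (which happens on a proper subvariety of $\rO(2)^2$), I would supplement by a column $\bj^{(k)}$ with $k>r$: its nonzero entries lie on rows whose tail agrees with $\epsilon$ except in coordinate $k$, disjoint from the rows of $N$, making the augmented matrix block diagonal. The new block is a single column with at least one nonzero entry (its exclusion set has size $\leq 2$ in the four-element set $\{0,1\}^2$), so it contributes rank $1$, giving total rank $\geq 2$.

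The main obstacle is precisely the case $r=2$, where the $2\times 2$ block from $N$ may genuinely collapse to rank $1$ (its determinant factors through a quantity like $\alpha_1/(\beta_1-\alpha_1)-\alpha_2/(\beta_2-\alpha_2)$ that vanishes on a hypersurface), so the proof must check that an additional column $\bj^{(k)}$ with $k>r$ always supplies an independent rank-$1$ block in the block-diagonal augmentation. This crucially uses $d\geq 4$ to ensure that the new rows avoid $\cI$.
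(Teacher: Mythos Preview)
Your proof is correct and is genuinely different from the paper's argument. The paper never touches $M_\Q$ directly for this lemma. Instead it works in $V^\perp$: for each $k$ with $Q_k\notin\rSP(2)$, it observes that both $\{e_0,e_1\}$ and $\{q_0^{(k)},q_1^{(k)}\}$ are left singular vectors of the $k$-th flattening of any $\T\in V^\perp\cap(\Q\cdot V^\perp)$, and since the SVD of a $2\times 2^{d-1}$ matrix with distinct singular values is unique, this forces the quadratic equation $f_k(\T)=\sum_{i_k=0}\T_\bi^2-\sum_{i_k=1}\T_\bi^2=0$. The paper then shows that $(f_1,\ldots,f_d)$ is a complete intersection by an explicit change of generators, so $r$ of these quadrics drop the dimension by $r$. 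This argument is uniform in $r$ and avoids all case analysis, at the price of invoking an algebraic fact (the complete intersection) rather than pure linear algebra.

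Your approach, by contrast, is a direct rank computation: you exhibit an explicit submatrix of $M_\Q$ with rank $\geq r$. The affine-function argument for $r\geq 3$ is clean and the block-diagonal augmentation for $r=2$ is a nice workaround (and your verification that the augmentation contributes an independent rank-$1$ block does go through, since for $d\geq 4$ the forbidden weight set $T_{s'}\cap\{0,1,2\}$ cannot be $\{0,1\}$ or $\{1,2\}$, hence excludes at most two of the four rows). The cost is the case split on $r$ and the somewhat delicate $r=2$ patch, which you yourself flag as the main obstacle. In short: the paper trades linear algebra for a little commutative algebra to get a case-free proof, while you trade a case analysis for an argument that stays entirely within elementary rank considerations.
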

\begin{proof}
    We show that $\dim((\Q \cdot V^\perp) \cap V^\perp) \leq 2d - r$, which is equivalent, by \Cref{lem:MQ}. First, suppose that $Q_k \notin \rSP(2)$ for some $k \in [d]$. Let $\T \in V^\perp \cap (\Q \cdot V^\perp)$. We claim that 
    \[
    f_k(\T) := \sum_{\bi \in \cI \mid i_k = 0} \T_\bi^2 - \sum_{\bi \in \cI \mid i_k = 1} \T_\bi^2 = 0.
    \]
    The $k$-th principal flattening of $\T \in (\R^2)^{\otimes d}$ is the matrix $\T^{(k)} \in \R^{2 \times 2^{d-1}}$ with rows indexed by $i_k \in \{0,1\}$ and columns by tuples in $\{0,1\}^{d-1}$ with 
$\T^{(k)}_{i_k, (i_1, \dots, i_{k-1}, i_{k+1}, \dots, i_d)} = \T_{i_1, \dots, i_d}$.
Let $\T \in V^\perp \cap (\Q \cdot V^\perp)$. Then $\T = \Q \cdot \cS$ for some~$\cS \in V^\perp$, so
$\T = \sum_{\bi \in \cI} \T_\bi e_{i_1} \otimes \cdots \otimes e_{i_d} = \sum_{\bi \in \cI} \cS_\bi q^{(1)}_{i_1} \otimes \cdots \otimes q^{(d)}_{i_d}$.
Therefore, the $k$-th principal flattening of $\T$ is
\begin{align*}
    \T^{(k)} &= e_0 \otimes \left( \sum_{\bi \in \cI \mid i_k = 0}  \T_\bi \bigotimes_{l \neq k} e_{i_l}\right) + e_1 \otimes \left( \sum_{\bi \in \cI \mid i_k = 1}  \T_\bi \bigotimes_{l \neq k} e_{i_l}\right) \\
    &= q^{(k)}_0 \otimes \left( \sum_{\bi \in \cI \mid i_k = 0}  \cS_{\bi} \bigotimes_{l \neq k} q^{(l)}_{i_l}\right) + q^{(k)}_1 \otimes \left( \sum_{\bi \in \cI \mid i_k = 1}  \cS_{\bi} \bigotimes_{l \neq k} q^{(l)}_{i_l}\right).
\end{align*}
Both expressions above are SVDs of the matrix $\T^{(k)}$, because every pair of distinct strings~$\bi ,\bj$ in $\cI$ is at Hamming distance at least two for $d \geq 4$, so the sets
\begin{align*}
    &\{ (i_1, \dots, i_{k-1}, i_{k+1}, \dots, i_d) \in \{ 0,1\}^{d-1} \mid (i_1, \dots, i_{k-1},0, i_{k+1}, \dots, i_d) \in \cI\} \text{ and} \\
    & \{ (i_1, \dots, i_{k-1}, i_{k+1}, \dots, i_d) \in \{ 0,1\}^{d-1} \mid (i_1, \dots, i_{k-1},1, i_{k+1}, \dots, i_d) \in \cI\}
\end{align*}
are Hamming distance at least one for $d \geq 4$.  Hence both $e_{0}, e_1$ and~$q^{(k)}_0, q^{(k)}_1$ are left singular vectors of $\T^{(k)}$. The SVD is unique unless two singular values are equal. So the two singular values, $\sqrt{\sum_{\bi \in \cI \mid i_k = 0} \T_\bi^2}$ and $\sqrt{\sum_{\bi \in \cI \mid i_k = 1} \T_\bi^2}$, are equal. This is equivalent to $f_k(\T) = 0$.

Next, we show that the variety defined by the ideal $I \coloneqq (f_1, \dots, f_d) \subset R \coloneqq \R[\T_{\bi} \mid \bi \in \cI]$ is a complete intersection; i.e., each equation reduces the dimension by one.  Given $k \in [d]$, let~$\chi(k) = (\chi_1, \dots, \chi_d) \in \{0,1\}^{d}$ be such that $\chi_i$ is one if $i = k$ and zero otherwise, and let~$\overline{\chi}(k) = (\overline{\chi}_1, \dots, \overline{\chi}_d) \in \{0,1\}^{d}$ be such that $\chi_i$ is zero if $i = k$ and one otherwise. Note that~$\cI = \{\chi(k), \overline{\chi}(k) \mid k \in [d]\}$.
We can rewrite $f_k$ as
    \[
    f_k = -(\T_{\chi(k)}^2 - \T_{\overline{\chi}(k)}^2) + \sum_{l \neq k} (\T_{\chi(l)}^2 - \T_{\overline{\chi}(l)}^2). 
    \]
    Hence, for all $k \in [d]$ we have
    \[
    -(d-3) f_k + \sum_{l \neq k} f_l = 2(d-2) \left( \T_{\chi(k)}^2 - \T_{\overline{\chi}(k)}^2\right).
    \]
    Therefore, $(f_1, \dots, f_d) = (\T_{\chi{(1)}}^2 - \T_{\overline{\chi}{(1)}}^2, \dots, \T_{\chi{(d)}}^2 - \T_{\overline{\chi}{(d)}}^2)$ and, since each generator on the right-hand side involves different variables, we have $\dim(R/I) = 2d-d=d$.

If there are exactly $r > 0$ matrices $Q_{k_1}, \dots, Q_{k_r} \notin \rSP(2)$ and $\T \in (\Q \cdot V^\perp) \cap V^\perp$, then $ f_{k_1}(\T) =  \cdots = f_{k_r}(\T) = 0$, so $\dim((\Q \cdot V^\perp) \cap V^\perp) \leq \dim(R/(f_{k_1}, \dots, f_{k_r})) = 2d - r$.
\end{proof}

\begin{remark}
    The fact that every pair of binary strings in $\cI$ is at Hamming distance at least $2$ for $d \geq 4$ played a crucial role in the last proof, since we could argue about the SVD of the principal flattenings of tensors in $V^\perp$. To visualize this, let $d = 4$ and $\T \in V^\perp$. Then,
    \begin{align*}
        \T^{(1)} &= \begin{pmatrix}
        0 & \T_{0001} & \T_{0010} & 0 & \T_{0100} & 0 & 0 & \T_{0111} \\
        \T_{1000} & 0 & 0 & \T_{1011} & 0 & \T_{1101} & \T_{1110} & 0
    \end{pmatrix} \\
    & = \begin{pmatrix}
        1 & 0 \\
        0 & 1
    \end{pmatrix}
    \begin{pmatrix}
        \sigma_0 & 0 \\
        0 & \sigma_1
    \end{pmatrix}
    \begin{pmatrix}
        0 & \frac{\T_{0001}}{\sigma_0} & \frac{\T_{0010}}{\sigma_0} & 0 & \frac{\T_{0100}}{\sigma_0} & 0 & 0 & \frac{\T_{0111}}{\sigma_0} \\
        \frac{\T_{1000}}{\sigma_1} & 0 & 0 & \frac{\T_{1011}}{\sigma_1} & 0 & \frac{\T_{1101}}{\sigma_1} & \frac{\T_{1110}}{\sigma_1} & 0
    \end{pmatrix}, 
    \end{align*}
    where $\sigma_0 = \sqrt{\sum_{\bi \in \cI \mid i_k = 0} \T_\bi^2}$ and $\sigma_1 = \sqrt{\sum_{\bi \in \cI \mid i_k = 1} \T_\bi^2}$, is an SVD of $\T^{(1)}$.
\end{remark}

Lemma~\ref{lem:codimension-not-in-SP} gives us a way to show \eqref{eq:positive-codimension}. Before we prove \Cref{thm:main} for binary tensors, it remains to consider the case $\Q \in \rSP(2)^d$.  Given $\Q \in \rSP(2)^{d}$ we define the binary string $\bi(\Q) = (i_1, \dots, i_d) \in \{ 0,1\}^{d}$ by 
\begin{equation} \label{eq:iQ}
    i_k = \begin{cases}
    0 &\text{if } Q_k = \begin{pmatrix}
        \pm 1 & 0 \\ 0 & \pm 1 
    \end{pmatrix}\\[1em]
    1 &\text{if } Q_k = \begin{pmatrix}
        0 & \pm 1 \\\pm 1 & 0
    \end{pmatrix}.
\end{cases}
\end{equation}
The $k$-th coordinate of $\bi(\Q)$ encodes whether $\Q$ permutes $e_0$ and $e_1$ on the $k$-th factor. The sum $|\bi(\Q)|$ is the number of factors in which $e_0$ and $e_1$ are permuted. Using this notation and
\Cref{thm:main-2222}, \Cref{prop:main-equivalence} specializes to binary tensors as follows.

\begin{proposition}\label{prop:main-equivalence-binary}
    \Cref{thm:main} for $(\R^2)^{\otimes d}$ is equivalent to the following statement: Consider a generic $ \T \in V \subset (\R^{2})^{\otimes d}$ and let $\Q \in \rO(2)^d$. Then $\Q \cdot \T \in V$ if and only if $\Q \in \rSP(2)^d$, and $|\bi(\Q)| \in \{ 0, d\}$ for $d \neq 4$ or $|\bi(\Q)| \in \{ 0, 2, 4 \}$ for $d = 4$.
\end{proposition}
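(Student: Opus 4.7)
The plan is to specialize Proposition \ref{prop:main-equivalence} to the binary case $n_1 = \cdots = n_d = 2$, translate condition \eqref{eq:Qspec} into the combinatorial language of $\bi(\Q)$, and handle the excluded format $(2,2,2,2)$ separately using Theorem \ref{thm:main-2222}.

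For $d \neq 4$, since $n_k - n_1 = 0$ for every $k$, the block $\tilde Q_k$ in \eqref{eq:Qspec} is absent and the condition reduces to $Q_1 \in \rSP(2)$ together with $Q_k = Q_1 D_k$ for $k \geq 2$, where $D_k$ is diagonal with $\pm 1$ entries. Right multiplication by $D_k$ only rescales the columns of $Q_1$ by signs, so every $Q_k$ shares the zero pattern of $Q_1$: either all are diagonal with $\pm 1$ entries or all are antidiagonal with $\pm 1$ entries. By the definition \eqref{eq:iQ} of $\bi(\Q)$, this is precisely $\Q \in \rSP(2)^d$ with all entries of $\bi(\Q)$ equal, equivalently $|\bi(\Q)| \in \{0, d\}$. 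Combined with Proposition \ref{prop:main-equivalence}, this yields the equivalence for $d \neq 4$.

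For $d = 4$, Proposition \ref{prop:main-equivalence} does not apply, so I would reprove the equivalence from scratch in the style of its proof but using Theorem \ref{thm:main-2222} in place of the uniqueness of the orthogonal basis of singular vector tuples. By Lemmas \ref{lem:Vcanonical} and \ref{lem:equivariance-singular-vectors}, for generic $\T \in V$ the condition $\Q \cdot \T \in V$ is equivalent to the pair $\{(Q_1^\top e_0)\otimes\cdots\otimes (Q_4^\top e_0),\ (Q_1^\top e_1)\otimes\cdots\otimes (Q_4^\top e_1)\}$ being one of the four orthogonal bases of singular vector tuples of $\T$ enumerated in Theorem \ref{thm:main-2222}. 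These are the canonical basis $\{e_0^{\otimes 4}, e_1^{\otimes 4}\}$ and the three bases obtained by simultaneously swapping $e_0 \leftrightarrow e_1$ in exactly two of the four factors; matching each basis with its realizing $\Q$ (up to sign and reordering of columns) shows that they correspond exactly to $\Q \in \rSP(2)^4$ with $|\bi(\Q)| \in \{0, 2, 4\}$. For the converse, a direct check suffices: such a $\Q$ acts on indices by bitwise XOR with $\bi(\Q)$ combined with signs, and for $|\bi(\Q)| \in \{0, 2, 4\}$ this map preserves the set $\cI$ of weight-$1$ and weight-$3$ strings, hence preserves both $V^\perp$ and $V$.

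The main obstacle is the $d = 4$ case: ruling out tuples $\Q \in \rO(2)^4 \setminus \rSP(2)^4$ that might generically produce a fifth orthogonal basis (which Theorem \ref{thm:main-2222} forbids), and carefully matching the four allowed bases to the eight values of $\bi(\Q)$ with $|\bi(\Q)| \in \{0, 2, 4\}$.
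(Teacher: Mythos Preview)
Your approach matches the paper's: the text preceding Proposition~\ref{prop:main-equivalence-binary} says only ``Using this notation and Theorem~\ref{thm:main-2222}, Proposition~\ref{prop:main-equivalence} specializes to binary tensors as follows,'' and your $d\neq 4$ paragraph is exactly that specialization, while your $d=4$ paragraph unpacks the reference to Theorem~\ref{thm:main-2222}.

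Two clarifications are worth making. First, Proposition~\ref{prop:main-equivalence-binary} is an \emph{equivalence} between (S1) $=$ Theorem~\ref{thm:main} for $(\R^2)^{\otimes d}$ and (S2) $=$ the displayed statement about $\Q$; it does not assert either. Your final paragraph (``ruling out tuples $\Q\in\rO(2)^4\setminus\rSP(2)^4$'') is the content of proving (S2) itself, which the paper carries out separately via Lemmas~\ref{lem:dim-QinSP} and~\ref{lem:generic-codimension}; it is not an obstacle for the equivalence. Second, invoking all of Theorem~\ref{thm:main-2222} for $d=4$ would be circular, since its ``generically only four'' clause is proved \emph{from} Proposition~\ref{prop:main-equivalence-binary}. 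What the equivalence actually needs is only the first clause of Theorem~\ref{thm:main-2222} (the four listed pairs are always orthogonal bases of singular vector tuples), and you supply this independently via your direct XOR check that $|\bi(\Q)|\in\{0,2,4\}$ preserves $\cI$. With that in hand, (S1)$\Rightarrow$(S2) follows because (S1) caps the number of bases at four and the direct check exhibits four, forcing them to be exactly those; (S2)$\Rightarrow$(S1) follows by the same counting as in Proposition~\ref{prop:main-equivalence}. So your argument is sound once phrased as a two-sided implication rather than as a proof of (S2).
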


\begin{lemma} \label{lem:dim-QinSP}
    Fix $\Q \in \rSP(2)^d$ with $d \geq 4$. Then,
    \[
    \codim((\Q \cdot V) \cap V, V) =
    \begin{cases}
        0 &\text{if } |\bi(\Q)| \in \{ 0,d\} \text{ for } d \geq 5 \text{ or } |\bi(\Q)| \in \{ 0,2,4\} \text{ for } d = 4 \\
        2d-4 &\text{if } |\bi(\Q)| \in \{ 2,d-2\} \text{ for } d \geq 5 \\
        2d &\text{otherwise}.
    \end{cases}
    \]
\end{lemma}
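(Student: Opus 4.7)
The plan is to use Lemma~\ref{lem:MQ} to reduce the codimension computation to computing the rank of $M_\Q$, and then to exploit the very special structure of $M_\Q$ when $\Q\in\rSP(2)^d$: it is (a submatrix of) a signed permutation matrix on $(\R^2)^{\otimes d}$. This will reduce the problem to counting binary strings, which I split into cases by $s=|\bi(\Q)|$.

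\textbf{Step 1 (structure of $M_\Q$).} If $Q_k\in\rSP(2)$, then $q^{(k)}_{i_kj_k}\in\{0,\pm 1\}$ and is nonzero if and only if $i_k=j_k\oplus[\bi(\Q)]_k$, where $\oplus$ denotes addition modulo~$2$. Writing $\bs\coloneqq\bi(\Q)\in\{0,1\}^d$ and thinking of XOR componentwise, the entry $[M_\Q]_{(\bi,\bj)}$ equals $\pm 1$ if $\bi=\bj\oplus\bs$ and equals $0$ otherwise. Because $\bj\mapsto\bj\oplus\bs$ is a bijection of $\{0,1\}^d$, each row and each column of $M_\Q$ contains at most one nonzero entry. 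Hence $M_\Q$ has linearly independent columns supported on disjoint rows, so
\[
\rank(M_\Q)\;=\;\#\{\bj\in\cI : \bj\oplus\bs\notin\cI\}.
\]
By Lemma~\ref{lem:MQ}, this number equals $\codim((\Q\cdot V)\cap V,V)$.

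\textbf{Step 2 (evaluating Hamming weights after XOR).} Recall $\cI=\{\bj:|\bj|\in\{1,d-1\}\}$, and split $\cI=\{\chi(k):k\in[d]\}\sqcup\{\overline{\chi}(k):k\in[d]\}$. A direct calculation gives
\[
|\chi(k)\oplus\bs|=\begin{cases} s-1, & s_k=1,\\ s+1, & s_k=0,\end{cases}
\qquad
|\overline{\chi}(k)\oplus\bs|=\begin{cases} d-s+1, & s_k=1,\\ d-s-1, & s_k=0.\end{cases}
\]
So $\bj\oplus\bs\in\cI$ iff the corresponding value lies in $\{1,d-1\}$, which happens only when $s\in\{0,2,d-2,d\}$ (and, for $d=4$, the values $2$ and $d-2$ coincide).

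\textbf{Step 3 (case analysis on $s$).} For $s=0$ or $s=d$, every $\bj\in\cI$ satisfies $\bj\oplus\bs\in\cI$, so $\rank(M_\Q)=0$. For $s=2$ with $d\geq 5$, among the $\chi(k)$ only the $s=2$ values of $k$ with $s_k=1$ land back in $\cI$ (giving weight $1$), and the remaining $d-2$ choices give weight $3\notin\{1,d-1\}$; symmetrically for the $\overline{\chi}(k)$, with weight $d-3\notin\{1,d-1\}$. So $\rank(M_\Q)=2(d-2)=2d-4$. The case $s=d-2$ with $d\geq 5$ is symmetric. For $d=4$ and $s=2=d-2$ we have $\{3,d-3\}=\{1,3\}\subset\{1,d-1\}$, so all $2d$ columns return to $\cI$ and the rank is $0$. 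Finally, for any other $s$ (necessarily $d\geq 5$ and $3\leq s\leq d-3$), none of the four possible values $s\pm 1,\,d-s\pm 1$ lies in $\{1,d-1\}$, so every one of the $2d$ columns of $M_\Q$ has a nonzero entry and $\rank(M_\Q)=2d$. Assembling the three cases yields the statement of the lemma.

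The main obstacle, if any, is not conceptual but bookkeeping: one has to be careful to isolate the degenerate case $d=4$, $s=2$, where $s$ and $d-s$ collide and the ``exceptional'' extra orthogonal bases of Theorem~\ref{thm:main-2222} appear. Steps~1 and~2 are mechanical once one observes that signed permutations act by relabeling indices via XOR, so all the content sits in the four-case enumeration in Step~3.
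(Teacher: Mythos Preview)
Your proof is correct and follows the same approach as the paper: both reduce via Lemma~\ref{lem:MQ} to the rank of $M_\Q$, observe that for $\Q\in\rSP(2)^d$ the matrix $M_\Q$ is a submatrix of a signed permutation matrix indexed by the XOR map $\bj\mapsto\bj\oplus\bi(\Q)$, and then count how many elements of $\cI$ land outside $\cI$. One small slip: your parenthetical ``necessarily $d\geq 5$ and $3\leq s\leq d-3$'' in the final case omits $s\in\{1,d-1\}$ (and, for $d=4$, $s\in\{1,3\}$), but your actual verification that $s\pm 1,\ d-s\pm 1\notin\{1,d-1\}$ whenever $s\notin\{0,2,d-2,d\}$ covers those values as well, so the argument stands.
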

\begin{proof}
We study rank of $M_\Q$, by \Cref{lem:MQ}. 
We have $[M_{\Q}]_{\bi, \bj} \neq 0$ if and only if $q^{(1)}_{i_1 j_1}\neq 0$, \ldots, $q^{(d)}_{i_d j_d}\neq 0$ or, equivalently, $\bj = \bi + \bi(\Q)$ (where the sum is elementwise and modulo two so that the result is a binary string).
Define
    \[
    \cI + \bi(\Q) \coloneqq \{\bi + \bi(\Q) \mid \bi \in \cI\} = \{\bj \in \{0,1\}^{d} \mid |\bi(\Q) + \bj| \in \{ 1, d-1\}\}.
    \]
Then $M_\Q$ has $|(\cI + \bi(\Q)) \cap (\{0,1\}^{d} \setminus \cI)|$ nonzero rows and each column of $M_{\Q}$ has at most one nonzero entry. 

If $|\bi(\Q)| \in \{ 0, d\}$ and $d \geq 5$ or $|\bi(\Q)| = \{ 0, 2, 4 \}$ and $d = 4$, then $\cI + \bi(\Q) = \cI$, so $M_\Q$ is the zero matrix.  If $|\bi(\Q)| \in \{ 2, d-2\}$ and $d \geq 5$, then $(\cI + \bi(\Q))\cap \cI$ has size four, so $\dim( \ker M_\Q) = 4$. 
If $|\bi(\Q)| \notin \{ 0, 2, d-2, d\}$, then $|\bj| \notin \{1, d-1\}$ for all $\bj \in \cI + \bi(\Q)$, so $M_\Q$ has full rank.
\end{proof}

\begin{remark}
    To visualize why $d=4$ is special, consider a binary order-four tensor $\T \in V$ and let $\Q = (P,P,I,I)$ where $I$ and $P$ are the identity and transposition, respectively, as in \Cref{ex:MQd4}.
   The entries of  $\T$, and those of its image under $(P, P, I, I)$ are
\[
\T = \left(\begin{array}{cc|cc}
     \T_{0000}&  0& 0& \T_{0101}\\
     0 & \T_{1100}& \T_{1001}& 0\\
     \hline
     0 & \T_{0110} & \T_{0011} & 0\\
     \T_{1010} & 0 & 0 & \T_{1111}
\end{array}\right), \quad (P, P, I, I) \cdot \T = \left(\begin{array}{cc|cc}
     \T_{1100}&  0& 0& \T_{1001}\\
     0 & \T_{0000}& \T_{0101}& 0\\
     \hline
     0 & \T_{1010} & \T_{1111} & 0\\
     \T_{0110} & 0 & 0 & \T_{0011}
\end{array}\right) .
\]
The sparsity pattern of the two matrices is the same. Hence, the image lies in $V$. In this case, $|\bi(\Q)| =2$ and $M_\Q = 0 \in \R^{8 \times 8}$.
\end{remark}
Next, we offer a refinement of \Cref{lem:codimension-not-in-SP}.
\begin{lemma} \label{lem:generic-codimension}
    Let $d \geq 4$.
    Let $\Q \in \rO(2)^{d}$ with exactly $r > 0$ matrices~$Q_k$ such that $Q_k \notin \rSP(2)$ but otherwise generic, then $\codim((\Q \cdot V) \cap V, V) = 2d$. In particular, for generic $\Q \in \rO(2)^{d}$ we have $\codim((\Q \cdot V) \cap V, V) = 2d$.
\end{lemma}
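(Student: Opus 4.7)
The plan is to translate the codimension claim into a rank statement for $M_\Q$ via \Cref{lem:MQ}, and then exploit openness of the full-column-rank locus together with an explicit witness. By \Cref{lem:MQ}, $\codim((\Q\cdot V)\cap V, V) = \rank M_\Q$, which is at most $2d$ since $M_\Q$ has $2d$ columns. The set $U = \{\Q \in \rO(2)^d : \rank M_\Q = 2d\}$ is Zariski-open in $\rO(2)^d$, being the complement of the common vanishing locus of the $2d\times 2d$ minors of $M_\Q$. Hence, to establish that a generic $\Q$ in the given stratum (specified by the positions of the $r$ non-$\rSP(2)$ factors and by the choice of $\rSP(2)$ values at the remaining $d-r$ positions) satisfies $\rank M_\Q = 2d$, it suffices to exhibit a single $\Q^\star$ in the stratum with $\rank M_{\Q^\star} = 2d$.

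The witness will be constructed in two steps. First, I would treat the base case $r=1$ with $Q_2 = \cdots = Q_d = I_2$ and $Q_1 = R_\theta$ for a generic angle $\theta$. Then $[M_\Q]_{\bi,\bj}$ vanishes unless $i_k = j_k$ for all $k \geq 2$, so each column $\bj \in \cI$ is supported on exactly two candidate rows $\bi = (i_1, j_2, \ldots, j_d)$ with $i_1 \in \{0,1\}$, of Hamming weights $a = \sum_{k\geq 2} j_k$ and $a+1$. The constraint $|\bj| \in \{1, d-1\}$ forces $a \in \{0, 1, d-2, d-1\}$, and since $d \geq 4$ the integers $1$ and $d-1$ are non-consecutive, so exactly one of $\{a, a+1\}$ lies in $\{1, d-1\}$. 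Hence each column has exactly one nonzero entry (equal to $\pm \sin\theta$) in a row of $\{0,1\}^d \setminus \cI$, and the $2d$ associated rows are pairwise distinct (they have Hamming weights in $\{0, 2, d-2, d\}$ and are further distinguished by which coordinate is flipped). Thus $M_\Q$ is, up to signs and reordering, a $2d \times 2d$ diagonal matrix scaled by $\sin\theta$, so $\rank M_\Q = 2d$ for generic $\theta$. This generalizes the $d=4$, $r=1$ computation carried out earlier.

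For general $r \geq 1$, perturb the base witness by replacing the identity factors at positions $2, \ldots, r$ with rotations $R_{\theta_2}, \ldots, R_{\theta_r}$ of small nonzero angle (and, if the stratum requires, substitute the prescribed signed-permutation values at positions $r+1, \ldots, d$). The perturbed $\Q$ is close to the base witness in $\rO(2)^d$ and hence remains in $U$ by openness; moreover, each $R_{\theta_k}$ with small nonzero $\theta_k$ belongs to $\rO(2) \setminus \rSP(2)$, so the perturbed $\Q$ has exactly $r$ factors outside $\rSP(2)$. The \emph{in particular} case is $r = d$, since a generic $\Q \in \rO(2)^d$ has all factors outside the finite set $\rSP(2)$. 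The main obstacle is the combinatorial bookkeeping in the $r = 1$ base witness when the frozen signed permutations at positions $r+1, \ldots, d$ are not all identity: some columns of $M_\Q$ may then contain two nonzero entries rather than one, and one must verify by case analysis on the number of "flipping" signed permutations and on $|\bj|$ that $M_\Q$ still has full column rank $2d$.
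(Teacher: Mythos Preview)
Your overall strategy---translate to $\rank M_\Q$ via \Cref{lem:MQ}, then exhibit a full-rank witness and invoke Zariski-openness---matches the paper's. The two diverge in the choice of witness.

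You construct the witness \emph{inside} the stratum: one genuine rotation, the rest identities, and then perturb. The paper instead takes its witness from the \emph{closure} of the stratum, namely from $\rSP(2)^d$ itself. Given any prescribed signed-permutation values $P_1,\ldots,P_{d-r}$ at the frozen positions, the paper extends to a full tuple $\Q\in\rSP(2)^d$ with $|\bi(\Q)|\notin\{0,2,d-2,d\}$; by \Cref{lem:dim-QinSP} this $\Q$ has $\rank M_\Q=2d$, and lower semicontinuity of rank (together with the fact that $\rSP(2)^d$ lies in the closure of $W_r$) pushes full rank onto the generic point of the stratum.

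This witness-in-closure trick is precisely what dissolves the obstacle you flag at the end. In your approach, replacing the frozen identities by arbitrary signed permutations is a discrete jump, so openness around your rotation witness no longer applies; you would need to redo the support analysis of $M_\Q$ for every flip pattern among the frozen factors (and, as you note, some columns then acquire two nonzero entries). The paper sidesteps that case analysis entirely by recycling \Cref{lem:dim-QinSP}, which had already computed $\rank M_\Q$ for every $\Q\in\rSP(2)^d$.

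Your explicit $r=1$ calculation is correct and furnishes an independent, self-contained witness for the sub-stratum with all-identity frozen values, but as written the proposal only fully handles that sub-case and leaves the general frozen configuration as an acknowledged gap.
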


\begin{proof}
    For $r \in [d]$ let $W_r$ be the set of tuples $\Q = (Q_1, \dots, Q_d) \in \rO(2)^{d}$ with exactly $r$ matrices $Q_k$ not in $\rSP(2)$. For each $r \in [d]$ and $P_{1}, \dots, P_{d-r} \in \rSP(2)$, there exists $\Q \in \rSP(2)^d$ with $Q_k = P_k$ for all $1 \leq k \leq d-r$ (if $r=d$ there are no conditions imposed) and $|\bi(\Q)| \notin \{ 0,2,d-2,d\}$. For this $\Q$,  $M_\Q$ has full rank by \Cref{lem:dim-QinSP}. Hence, the matrix $M_\Q$ has full rank for generic $\Q \in W_r$, because $\rSP(2)^d$ is contained in the closure of $W_r$ for all $r \in [d]$ and the rank of a matrix is a lower semicontinuous function. Equivalently, $\codim((\Q \cdot V) \cap V, V) = 2d$ for generic $\Q \in W_r$ for all $r \in [d]$, by \Cref{lem:MQ}.
\end{proof}

We are now ready to prove our main result for binary tensors.

\begin{proof}[Proof of \Cref{thm:main} for $(\R^2)^{\otimes d}$]
The case $d=2$ is the generic uniqueness of the singular value decomposition. The case $d=3$ is \Cref{prop:main-binary-d3}. 
Let $d \geq 4$. We prove the equivalent statement in Proposition~\ref{prop:main-equivalence-binary}. First, suppose that $\Q \in \rSP(2)^d$. Then $\codim((\Q \cdot V^\perp) \cap V^\perp, V^\perp) > 0$ unless $|\bi(\Q)| \in \{0,d \}$ for $d \geq 5$ or $|\bi(\Q)| \in \{0,2,4 \}$ for $d = 4$, by \Cref{lem:dim-QinSP}. This shows that, to prove the statement in Proposition~\ref{prop:main-equivalence-binary},  it suffices to show \eqref{eq:positive-codimension} or, equivalently, by \Cref{lem:MQ}, that
\[
\codim \left( \bigcup_{\Q \in \rO(2)^{d} \setminus \rSP(2)^d} (\Q \cdot V^\perp) \cap V^\perp, V^\perp \right) > 0.
\]
Consider the variety $W$ defined  by the $2d\times 2d$ minors of $M_\Q$
\[
W \coloneqq \{\Q \in \rO(2)^{d} \mid \rank(M_\Q) < 2d\} = \{\Q \in \rO(2)^{d} \mid (\Q \cdot V^\perp) \cap V^\perp \neq \{0\}\}.
\]
Then
\[
\bigcup_{\Q \in \rO(2)^d \setminus \rSP(2)^d} (\Q \cdot V^\perp) \cap V^\perp = \bigcup_{\Q \in W \setminus \rSP(2)^d} (\Q \cdot V^\perp) \cap V^\perp, 
\]
and $\dim(W) < \dim(\rO(2)^{d}) = d$, by Lemmas \ref{lem:MQ} and \ref{lem:generic-codimension}. For $r \in [d]$ let $W_r$ be the set of tuples $\Q = (Q_1, \dots, Q_d) \in \rO(2)^{d}$ for which exactly $r$ matrices $Q_k$ are not in $\rSP(2)$.
We can decompose $W \setminus \rSP(2)^d$ as $W \setminus \rSP(2)^d = \cup_{r=1}^d (W \cap W_r) \setminus \rSP(2)^d$. Let
$\Q \in W_r$, then $\codim((\Q \cdot V^\perp) \cap V^\perp, V^\perp) \geq r$ by \Cref{lem:codimension-not-in-SP}. We have that $\dim(W_r) = r$ and, for a generic tuple $\Q \in W_r$, $(\Q \cdot V^\perp) \cap V^\perp = \{0\}$, by \Cref{lem:generic-codimension} and \Cref{lem:MQ}. 
Hence, $\dim(W \cap W_r) < r$. 
Therefore, for all $r \in [d]$ we have
\[
\codim\left(\bigcup_{\Q \in (W \cap W_r) \setminus \rSP(2)^d} (\Q \cdot V^\perp) \cap V^\perp, V^\perp \right) >0 . \qedhere
\]
\end{proof}

\Cref{thm:main-2222} refines  \Cref{thm:main} for binary tensors of order 4.

\begin{proof}[Proof of Theorem~\ref{thm:main-2222}]
Consider a generic $\T \in V \subset \R^{2 \times 2 \times 2 \times 2}$.
The orthogonal bases of singular vector tuples of $\T$ are $\{\pm e_{i_1} \otimes e_{i_2} \otimes e_{i_3} \otimes e_{i_4}, \pm e_{i_1+1} \otimes e_{i_2+1} \otimes e_{i_3+1} \otimes e_{i_4+1}\}$ with $|\bi| \in \{ 0, 2, 4\}$, where the sums in the indices are taken modulo 2, by \Cref{prop:main-equivalence-binary}. Equivalently, $\T$ has four distinct orthogonal basis of singular vector tuples (up to sign flip):
\begin{align*}
    &\{ e_0 \otimes e_0 \otimes e_0 \otimes e_0, e_0 \otimes e_0 \otimes e_0 \otimes e_0\}, \quad \{ e_0 \otimes e_0 \otimes e_1 \otimes e_1, e_1 \otimes e_1 \otimes e_0 \otimes e_0\}, \\
    &\{ e_0 \otimes e_1 \otimes e_0 \otimes e_1, e_1 \otimes e_0 \otimes e_1 \otimes e_0\}, \quad \{ e_0 \otimes e_1 \otimes e_1 \otimes e_0, e_1 \otimes e_0 \otimes e_0 \otimes e_1\}. \qedhere
\end{align*}
\end{proof}

\subsection{The general case $n_1 \times \cdots \times n_d$}

We proved \Cref{thm:main} for binary tensors in the last section.
The idea of the general proof is to reduce the general case to the binary case using the normal form of orthogonal transformations. Any $Q\in \rO(n)$ can be written as~$Q=P R P^\top$ where $P\in \rO(n)$ and $R$ has the following block-diagonal form:
\begin{equation}\label{eq:orthogonal-blocks}
R=\begin{pmatrix}
    B_1 &     &\\
        & \ddots &\\
        &        & B_{\frac{n}{2}}\\
\end{pmatrix} (n \text{ even}), 
\quad R=\begin{pmatrix}
    B_1 & & & \\
        & \ddots & & \\
        &        & B_{\frac{n-1}{2}}& \\
        & & & \pm 1
\end{pmatrix} (n \text{ odd}),
\end{equation}
with $B_i \in \rO(2)$ for all $i$. See, e.g., \cite[Theorem~10.19]{roman2005advanced}. 
We will use the following lemma to show that, up to an orthogonal change of basis, the tuple $\Q$ 
is of the form \eqref{eq:orthogonal-blocks}.

\begin{lemma}[Change of basis]\label{lem:reduction}
 Let $\T \in \R^{n_1 \times \cdots \times n_d}$ and let $\cS = \bP \cdot \T$, where $P_k \in \rO(n_k)$. Then $\T \in V$ and $\Q \cdot \T \in V$ if and only of $\cS \in \bP \cdot V$ and $\bR \cdot \cS \in \bP \cdot V$, where $R_k = P_k Q_k P_k^\top$. The coordinates of $\cS$ and $\bR \cdot \cS$ in the basis $\{p_{i_1}^{(1)} \otimes \cdots \otimes p_{i_d}^{(d)} \mid i_k \in [n_k]\}$ are $\T_{i_1, \dots, i_d}$ and $[\Q \cdot \T]_{i_1, \dots, i_d}$, respectively, where $p_i^{(k)}$ is the $i$-th column of $P_k$.
\end{lemma}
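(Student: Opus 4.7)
The plan is to prove this lemma by two short computations: one for the equivalence and one for the coordinate identification. The algebraic engine is the single identity $R_k P_k = P_k Q_k$, which is immediate from $R_k = P_k Q_k P_k^\top$ and $P_k^\top P_k = I_{n_k}$. With this in hand, everything reduces to keeping track of how the group action composes.

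For the equivalence, I would begin by applying the composition rule of the action stated in Section~\ref{sec:notation}, which gives
\begin{equation*}
\bR \cdot \cS \;=\; (R_1,\ldots,R_d)\cdot (\bP\cdot \T) \;=\; (R_1 P_1,\ldots,R_d P_d)\cdot \T \;=\; (P_1 Q_1,\ldots,P_d Q_d)\cdot \T \;=\; \bP \cdot (\Q \cdot \T).
\end{equation*}
Since $\bP = (P_1,\ldots,P_d)$ with each $P_k$ orthogonal, the map $\T'\mapsto \bP\cdot \T'$ is a linear bijection of $\R^{n_1\times\cdots\times n_d}$ with inverse $\cS'\mapsto \bP^\top\cdot \cS'$. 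Thus $\cS\in \bP\cdot V$ iff $\bP^\top\cdot \cS = \T \in V$, and similarly $\bR\cdot \cS \in \bP\cdot V$ iff $\bP\cdot(\Q\cdot\T)\in \bP\cdot V$ iff $\Q\cdot \T\in V$. This yields the equivalence.

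For the coordinate statement, note that $\{p^{(1)}_{i_1}\otimes\cdots\otimes p^{(d)}_{i_d} \mid i_k\in[n_k]\}$ is an orthonormal basis, because it is the image of the canonical basis under the orthogonal transformation induced by $\bP$; hence the coordinate of any tensor $\cS$ in this basis is $\langle \cS, p^{(1)}_{i_1}\otimes\cdots\otimes p^{(d)}_{i_d}\rangle$. I would verify the claim on rank-one $\T = v_1\otimes\cdots\otimes v_d$ and extend by multilinearity: using $\bP\cdot \T = (P_1 v_1)\otimes\cdots\otimes (P_d v_d)$ and $p^{(k)}_{i_k} = P_k e_{i_k}$,
\begin{equation*}
\langle \bP\cdot \T,\; p^{(1)}_{i_1}\otimes\cdots\otimes p^{(d)}_{i_d}\rangle \;=\; \prod_{k=1}^{d} \langle P_k v_k, P_k e_{i_k}\rangle \;=\; \prod_{k=1}^{d} \langle v_k, e_{i_k}\rangle \;=\; \T_{i_1,\ldots,i_d}.
\end{equation*}
Applied to $\bR\cdot \cS = \bP\cdot(\Q\cdot \T)$ in place of $\cS = \bP\cdot \T$, the same calculation gives coordinates $[\Q\cdot \T]_{i_1,\ldots,i_d}$.

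There is no serious obstacle: the lemma is a bookkeeping statement to the effect that the pair of conditions $\T\in V,\ \Q\cdot \T\in V$ is invariant under rewriting in the orthonormal basis determined by $\bP$, provided one simultaneously conjugates $\Q$ by $\bP$ to obtain $\bR$. The only point that requires care is the order of composition in the group action, namely that $\bR\cdot (\bP\cdot \T) = (R_1 P_1,\ldots,R_d P_d)\cdot \T$ and not the reverse, which is precisely what makes the conjugation $R_k = P_k Q_k P_k^\top$ the correct choice.
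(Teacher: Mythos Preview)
Your proof is correct and follows essentially the same approach as the paper: establish $\bR\cdot\cS = \bP\cdot(\Q\cdot\T)$ from the composition rule and the identity $R_kP_k=P_kQ_k$, deduce the equivalence via the bijectivity of $\bP$, and read off the coordinates in the new basis. The only cosmetic difference is that the paper obtains the coordinate statement by expanding $\T$ and $\Q\cdot\T$ in the canonical basis and applying $\bP$ linearly, whereas you compute inner products with $p^{(1)}_{i_1}\otimes\cdots\otimes p^{(d)}_{i_d}$; these are the same calculation.
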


\begin{proof}
    Let $\bP^\top = (P_1^\top, \dots, P_d^\top)$.
    We have $\Q \cdot \T=\bP^\top \cdot(\bR \cdot \cS)$, $\bR \cdot \cS = \bP \cdot(\Q \cdot \T)$, and~$\T=\bP^\top \cdot \cS$. For the coordinates, write
    $$
    \T=\sum_{i_1, \dots, i_d} \T_{i_1,\ldots,i_d}  e_{i_1}\otimes \cdots \otimes e_{i_d} \;\;\;\mbox{ and }\;\;\; \Q\cdot \T=\sum_{i_1, \dots, i_d} [\Q \cdot \T]_{i_1,\ldots,i_d}  e_{i_1}\otimes \cdots \otimes e_{i_d}.
    $$
    By linearity of the group action, acting with $\bP$ on both expressions gives
    \[
    \cS\;=\;\sum_{i_1, \dots, i_d} \T_{i_1,\ldots,i_d} \bP\cdot e_{i_1}\otimes \cdots \otimes e_{i_d} \;\;\;\mbox{ and }\;\;\; \bR \cdot \cS\;=\;\sum_{i_1, \dots, i_d} [\Q \cdot \T]_{i_1,\ldots,i_d} \bP\cdot e_{i_1}\otimes \cdots \otimes e_{i_d}. \qedhere
    \]
\end{proof}

\begin{lemma} \label{lem:finite-SVTs}
    Consider the linear space $U = \{ \T \in (\R^2)^{\otimes d} \mid \T_{2,1, \dots, 1} = \cdots = \T_{1, \dots, 1, 2} = 0\}$. Let $\T \in U$ be generic, then $\Q \cdot \T \in U$ for finitely many $\Q \in \rO(2)^d$.
\end{lemma}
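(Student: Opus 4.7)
My plan is to apply upper semicontinuity of fiber dimension on the incidence variety
\[
I \;:=\; \{(\T, \Q) \in U \times \rO(2)^d : \Q \cdot \T \in U\}
\]
with its first projection $\pi_1 \colon I \to U$. Since $\rO(2)^d$ is compact, $\pi_1$ is proper, and the assignment $\T \mapsto (\T, (I_2, \ldots, I_2))$ is a section, so $\pi_1$ is surjective. It therefore suffices to exhibit a single $\T_0 \in U$ for which $\pi_1^{-1}(\T_0) = \{\Q \in \rO(2)^d : \Q \cdot \T_0 \in U\}$ is zero-dimensional; as a closed algebraic subset of the compact group $\rO(2)^d$, zero-dimensional is equivalent to finite.

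I would take $\T_0 \in V \subset U$ generic and establish the Jacobian criterion at the point $(\T_0, (I_2, \ldots, I_2)) \in I$. Setting $\Psi(\T, \Q) := \pi_{U^\perp}(\Q \cdot \T)$ and parametrizing $T_{(I_2, \ldots, I_2)} \rO(2)^d$ by $(\alpha_1 J, \ldots, \alpha_d J)$ with $J = \begin{pmatrix}0 & -1 \\ 1 & 0\end{pmatrix}$, a first-order expansion shows that the differential $d_\Q \Psi|_{(\T_0, (I_2, \ldots, I_2))}$ is represented by the $d \times d$ matrix whose $(m, m)$ entry is the weight-zero coordinate $\T_{0, (1, \ldots, 1)}$ of $\T_0$ and whose $(m, l)$ entry for $m \neq l$ is $-\T_{0, \bi_{m, l}}$, where $\bi_{m, l} \in \{1, 2\}^d$ has $2$'s exactly in positions $m$ and $l$. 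For generic $\T_0 \in V$ all these entries are generic, and the determinant is a nontrivial polynomial in them, nonzero already at the specialization that sets all weight-two entries of $\T_0$ to zero (reducing the determinant to the $d$-th power of the weight-zero entry).

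This full-rank Jacobian shows both that the irreducible component of $I$ containing $(\T_0, (I_2, \ldots, I_2))$ has dimension $2^d - d = \dim U$, and that the identity is isolated in the fiber $\pi_1^{-1}(\T_0)$. To extend this to global finiteness of the fiber, I would stratify $\rO(2)^d$ by the jump loci of the function $\Q \mapsto \dim(U \cap \Q^{-1} U)$: the generic stratum contributes dimension $d + (2^d - 2d) = \dim U$ to $I$; the stabilizer of $U$ in $\rO(2)^d$ is the finite set of $4^d$ tuples of diagonal signed permutations, each contributing dimension $\dim U$; and the intermediate strata are controlled by the classical codimension bound for determinantal loci applied to the matrix $N_\Q$ (defined in analogy with $M_\Q$ of Lemma~\ref{lem:MQ}, with rows indexed by $\{1,2\}^d \setminus \cJ$ and columns by $\cJ$). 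Each stratum contributes at most $\dim U$ to $\dim I$, so $\dim I = \dim U$, and the fiber dimension theorem together with upper semicontinuity yields the lemma.

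The main obstacle is the stratification argument: the Jacobian computation at the identity is direct, but controlling every irreducible component of $I$ requires verifying that the specific family $\{N_\Q\}$ achieves the expected codimension $\geq d - r$ for its rank-$\leq r$ locus. This reduces to exhibiting, in each of the $2^d$ connected components of $\rO(2)^d$, a witness $\Q$ with $\rank N_\Q = d$—for instance $(P, P, \ldots, P)$ in the all-reflection component, where $N_\Q$ is (up to signs) a permutation matrix on the weight-one index set, so rank $d$.
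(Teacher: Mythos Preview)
Your setup is sound and the Jacobian computation at the identity is correct, but the gap you flag is real and is not closed by the witness argument you sketch. Exhibiting one $\Q$ with $\rank N_\Q = d$ in each connected component of $\rO(2)^d$ shows only that the rank-drop locus has codimension $\ge 1$ there; it does not deliver the bound $\codim\{\Q:\rank N_\Q \le r\}\ge d-r$ that your stratification requires for the intermediate strata. The classical determinantal codimension $(m-r)(n-r)$ is the \emph{expected} value, and for an arbitrary family it is an upper bound on the actual codimension, not a lower one --- so it is the wrong direction for what you need. Without that control, a component of $I$ sitting over a low-rank stratum could in principle have dimension exceeding $\dim U$, and the argument stalls.

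The paper sidesteps the whole stratification with a single global observation. The subspace $U$ is exactly the set of binary tensors for which $e_1^{\otimes d}$ is a singular vector tuple, and since every tensor has a best rank-one approximation (hence at least one singular vector tuple), the orbit $\rO(2)^d\cdot U$ equals all of $(\R^2)^{\otimes d}$. Thus
\[
\phi\colon \rO(2)^d\times U \longrightarrow (\R^2)^{\otimes d},\qquad (\Q,\T)\mapsto \Q\cdot\T,
\]
is surjective between varieties of the same dimension $2^d$, hence generically finite-to-one by the fiber dimension theorem. The fiber $\phi^{-1}(\T')$ is in bijection with $\{\Q:\Q^{-1}\cdot\T'\in U\}$, so a generic $\T'\in(\R^2)^{\otimes d}$ has finitely many singular vector tuples. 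That property is $\rO(2)^d$-invariant, so its failure locus is a proper closed invariant subvariety; since the orbit of $U$ is everything, this bad locus cannot contain $U$, and a generic $\T\in U$ inherits the finiteness.

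What this buys over your approach is that the surjectivity of the orbit map replaces all the component-by-component bookkeeping on $I$: no Jacobians, no determinantal strata, and no risk of an uncontrolled component.
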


\begin{proof}
    The linear space $U$ consists of tensors for which $e_1^{\otimes d}$ is a singular vector tuple. Hence, $\rO(2)^d \cdot U = (\R^2)^{\otimes d}$, because every tensor has a singular vector tuple: every tensor has a best rank-one approximation. Hence, we have a parametrization $\phi: \rO(2)^d \times U \to (\R^2)^{\otimes d}, (\Q, \T) \mapsto \Q \cdot \T$. We have $\dim\rO(2)^{d} = d$, $\dim U = 2^d - d$ and $\dim (\R^2)^{\otimes d} = 2^d$. Therefore, $\phi$ is generically finite-to-one, by the fiber dimension theorem (see, e.g., \cite[Chap.~1, \S~8, Thm.~3]{mumford2004red}). Finally, given a generic $\T = \Q \cdot \cS$ with $\Q \in \rO(2)^d$ and $\cS \in  U$, $\T$ has finitely many singular vector tuples if and only if $\cS$ does so.
\end{proof}

The following result shows that the statement in \Cref{prop:main-equivalence} holds for tuples of block-diagonal matrices as in \eqref{eq:orthogonal-blocks}.

\begin{lemma} \label{lem:reduction-normal-form}
    Let $d\geq 3$, let $n_d \geq 3$, let $\T \in V$ generic, and let $\Q \in \rO(n_1) \times \cdots \times \rO(n_d)$ such that $Q_k$ has the block form in \eqref{eq:orthogonal-blocks} for all $k \in [d]$. Then, $\Q \cdot \T \in V$ if and only if
    \[
    Q_1 \in \rSP(n_1) \quad \text{ and } \quad Q_k = \begin{pmatrix}
        Q_1D_k & 0 \\
        0 & \tilde{Q}_k
    \end{pmatrix} \quad \text{for all } k \geq 2,
    \]
    where $D_k$ is a diagonal matrix with diagonal entries $\pm 1$ and $\tilde{Q}_k \in \rO(n_k - n_1)$.
\end{lemma}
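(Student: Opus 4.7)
The plan is to reduce the conclusion to the binary case via pair-aligned subtensors, and then to use cross $V$-constraints (with indices crossing pair boundaries) to promote block-by-block information into the global signed-permutation structure. The ``if'' direction follows by a direct computation from the definition of $V$, so I focus on the ``only if'' direction.

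First I would set up notation: for each $k \in [d]$, write $Q_k = \mathrm{blkdiag}(B_1^{(k)}, \dots, B_{m_k}^{(k)}, \epsilon_k)$, where the trailing scalar $\epsilon_k \in \{\pm 1\}$ appears only when $n_k$ is odd. For each pair $p_a = \{2a-1, 2a\}$ the block-diagonal form of $\Q$ restricts to an action of $(B_a^{(1)}, \dots, B_a^{(d)})$ on the subtensor $\T|_{p_a \times \cdots \times p_a}$. For each $a$ with $p_a \subseteq [n_1]$, both indices of $p_a$ lie in $[n_1]$, so the $V$-constraints restrict to binary $V$-constraints; since $\T$ is generic, the pair-subtensor is generic in binary $V$, and Proposition~\ref{prop:main-equivalence-binary} yields $(B_a^{(1)}, \dots, B_a^{(d)}) \in \rSP(2)^d$ with $|\bi| \in \{0, d\}$, or $|\bi| \in \{0, 2, 4\}$ when $d = 4$. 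In the $d=4$ case I would rule out $|\bi| = 2$ using $n_d \geq 3$: pick $i_d \in [n_d] \setminus p_a$ and expand $[\Q \cdot \T]_{j, \dots, j, i_d} = 0$ for $j \in p_a$. The block-diagonal form of $Q_d$ and the signed-permutation form of each $B_a^{(k)}$, $k < d$, reduce the expansion to $\sum_{\ell_d} q^{(d)}_{i_d \ell_d} \T_{\sigma_1(j), \sigma_2(j), \sigma_3(j), \ell_d} = 0$, where $\sigma_k \in S_2$ is the permutation of $B_a^{(k)}$ and $\ell_d$ ranges over the $Q_d$-block of $i_d$. Orthogonality of $Q_d$ prevents all coefficients from vanishing, so every participating $\T$-entry must be $V$-constrained, which happens only when $\sigma_1(j) = \sigma_2(j) = \sigma_3(j)$. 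Running this for both values of $j \in p_a$ yields $\sigma_1 = \sigma_2 = \sigma_3$; combined with $|\bi| \in \{0, 2, 4\}$ this forces $|\bi| \in \{0, 4\}$ and extends the common permutation to the fourth factor.

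When $n_1$ is odd, the pair $\{n_1, n_1+1\}$ is a genuine block $B$ of $Q_k$ for $k \geq 2$ with $n_k > n_1$, but its second index lies outside $[n_1]$, so to match the claimed structure I must show that $B$ is a diagonal signed permutation. Here I would expand the $V$-constraint $[\Q \cdot \T]_{n_1, \dots, n_1, n_1 + 1, n_1, \dots, n_1} = 0$ with $n_1+1$ at position $k$: the singleton entry $\epsilon_1$ of $Q_1$ pins $\ell_1 = n_1$, the pair structures of the other $Q_{k'}$ restrict $\ell_{k'}$ to $\{n_1, n_1+1\}$, and the $V$-constrained entries are precisely those with at most one $\ell_{k'} = n_1+1$. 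Imposing vanishing coefficients on each non-constrained $\T$-entry and repeatedly applying $\rO(2)$-orthogonality of each $B_{(n_1+1)/2}^{(k')}$ forces the off-diagonal entries of $B$ to vanish. Assembling, each block of $Q_1$ is a signed permutation so $Q_1 \in \rSP(n_1)$, and for $k \geq 2$ each block of $Q_k$ lying entirely inside $[n_1]$ matches the corresponding block of $Q_1$ up to diagonal signs, while the boundary block (if any) is a diagonal signed permutation; hence $Q_k|_{[n_1]} = Q_1 D_k$ for a diagonal $D_k$ with $\pm 1$ entries, and orthogonality of $Q_k$ forces the remaining off-diagonal blocks to vanish.

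The hardest part will be the boundary-block step for odd $n_1$: the coefficient equations form a combinatorial product system indexed by the subsets of $\{k' \in [d] \setminus \{1, k\}\}$ of size at least two, and extracting that all relevant off-diagonal entries vanish requires a case analysis together with the $\rO(2)$-norm relations for each block. Everything else is a comfortable application of the binary case and genericity arguments.
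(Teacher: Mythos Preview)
Your overall strategy—reducing each aligned pair $\{2a-1,2a\}\subseteq[n_1]$ to the binary case and then handling the straddling block $\{n_1,n_1+1\}$ when $n_1$ is odd—is the paper's strategy. The first part and the $d=4$ elimination of $|\bi|=2$ are essentially the same as in the paper (your phrase ``every participating $\T$-entry must be $V$-constrained'' silently treats the relevant $Q_d$-block as fixed; using both $j\in p_a$ does supply enough equations, but via an overdetermined-system argument rather than entrywise vanishing).

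The real gap is in the odd-$n_1$ boundary step. You use only the constraints $[\Q\cdot\T]_{n_1,\dots,n_1+1,\dots,n_1}=0$ with $n_1+1$ in a single position, and claim that vanishing coefficients plus $\rO(2)$-orthogonality force the straddling blocks to be diagonal. They do not. Take $d=3$, $n_1=3$, $n_2=n_3=4$, and set $a=\T_{3,3,3}$, $b=\T_{3,4,4}$. After discarding the $V$-zero entries your two constraints read
\[
q^{(2)}_{43}q^{(3)}_{33}\,a+q^{(2)}_{44}q^{(3)}_{34}\,b=0,\qquad
q^{(2)}_{33}q^{(3)}_{43}\,a+q^{(2)}_{34}q^{(3)}_{44}\,b=0,
\]
and for generic $a,b$ the case analysis (whether or not you first ``impose vanishing coefficients'') yields two branches: both straddling blocks diagonal, \emph{or} both anti-diagonal. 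The anti-diagonal branch violates the lemma's conclusion yet survives all of the constraints you list. What kills it is a constraint you omit: $[\Q\cdot\T]_{1,n_1,\dots,n_1}=0$, which in that branch evaluates to $\pm\T_{1,4,4}$, generically nonzero. This is exactly how the paper proceeds: it first applies \Cref{lem:finite-SVTs} to the $\{n_1,n_1+1\}$-subtensor to reduce the straddling blocks to finitely many possibilities, and then uses the cross-constraint $[\Q\cdot\T]_{1,n_1,\dots,n_1}=0$—whose $\T$-entries lie outside that subtensor and are therefore independent—to eliminate all but the diagonal one. Your plan is missing both the finiteness reduction and, crucially, any constraint whose index touches a pair other than $\{n_1,n_1+1\}$.
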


\begin{proof}
For each $k \in [d]$, let $B_1^{(k)} \in \rO(2)$ be the upper-left $2\times 2$ block of $Q_k$. 
    Given any tuple $\bi \in \{1,2\}^{d}$ we have 
    \[
    [\Q \cdot \T]_{\bi} = \sum_{j_1, \dots, j_d = 1}^2 q^{(1)}_{i_1, j_1} \cdots q^{(d)}_{i_d, j_d}
    \T_{j_1, \dots, j_d} = \sum_{j_1, \dots, j_d = 1}^2 [B_1^{(1)}]_{i_1,j_1} \cdots [B_1^{(d)}]_{i_d,j_d} \T_{j_1, \dots, j_d}.
    \]
This formula involves only the entries of $\Q$ that lie in the $2\times 2$ blocks $\bs B^{(1)}=(B_1^{(1)}, \ldots, B_1^{(d)})$ and only the corresponding $2\times \cdots\times 2$ subtensor of the tensor $\cT$. Letting $A=\{1,2\}^{d}$, denote this subtensor by $\cT_A$. The above formula gives $(\Q\cdot \cT)_A=\bs B^{(1)}\cdot \cT_A$ and it allows us to use the binary case proven earlier.

When $d\neq 4$, the genericity of $\T$ (and so $\cT_A$) together with the conditions $[\Q \cdot \T]_{\bi}=0$ for all $\bi \in \{1,2 \}^{d}$ such that $d_H(\bi, (1, \dots, 1)) = 1$ or $d_H(\bi, (2, \dots, 2)) = 1$ implies that
$B_1^{(1)} \in {\rm SP}(2)$ and, for all $2 \leq k \leq d$, $B_1^{(k)} = B_1^{(1)} \diag (\pm 1, \pm 1)$.
    When $d = 4$, applying the binary case directly, we get $|\bi((B_1^{(1)}, \dots , B_1^{(4)}))| \in \{0, 2, 4\}$ (c.f \eqref{eq:iQ}).
    We show that $|\bi((B_1^{(1)}, \dots , B_1^{(4)}))| = 2$ cannot happen for generic $\T$. Suppose $B_1^{(1)} = B_1^{(2)} = I_2$ and $B_1^{(3)} = B_1^{(4)} = \begin{pmatrix}
    0 & 1 \\ 1 & 0
\end{pmatrix}
$.
Then,
\[
[(Q_1, \dots, Q_4) \cdot \T]_{1,1,1,3} = \begin{cases}
    \pm \T_{1,1,2,3} & \text{if } n_4 = 3 \text{ or } [B_2^{(4)}]_{1,1} = \pm 1\\
    \pm \T_{1,1,2,4} & \text{if } [B_2^{(4)}]_{1,1} = 0.
\end{cases}
\]
This is nonzero for generic $\T$. Hence $|\bi((B_1^{(1)}, \dots, B_1^{(4)}))| \in \{0,4\}$. The proof for the other tuples with $|\bi((B_1^{(1)}, \dots , B_1^{(4)}))| = 2$ is analogous.
This concludes the proof if $n_1$ is even, since $Q_1$ consists of $n_1/2$ diagonal $2\times 2$ blocks. 

Suppose now $n_1$ is odd (hence $n_1\geq 3$). The last entry of $Q_1$ does not belong in a $2 \times 2$ block. We show that $q^{(k)}_{n_1,n_1} = \pm 1$ for all $k \geq 2$. If $n_1 = \cdots = n_d$, then $q^{(k)}_{n_1n_1} = \pm 1$, by construction.
Let $n_1  = \cdots = n_{s-1}< n_s \leq \cdots \leq n_d$ for $1 < s \leq d$. Then $q^{(k)}_{n_1, n_1} = \pm 1$ for all $1 \leq k < s$. If $s = d$, then $[\Q \cdot \T]_{1, n_1, \dots, n_1}$ is 
$\pm q^{(d)}_{n_1,n_1 + 1} \T_{1, n_1, \dots,n_1, n_1 + 1}$ or $q^{(d)}_{n_1,n_1 + 1} \T_{2, n_1, \dots,n_1, n_1 + 1}$, depending on $Q_1$.
This is zero for generic $\T$ if and only if $q^{(d)}_{n_1,n_2} = 0$, and so $q^{(d)}_{n_1,n_1} = \pm 1$. Now, suppose that $s < d$. Let $\T \in V$ be generic, and suppose that $\Q \cdot \T \in V$. Consider the 
blocks indexed by $\{ n_1\}^{s-1} \times \{ n_1, n_1 + 1\}^{d-s+1}$. Then $\Q \cdot \T \in V$ implies $[\Q \cdot \T]_{n_1, \dots, n_1, {n_1 + 1}, n_1, \dots, n_1} = 0$. This system of equations has finitely many solutions $q^{(k)}_{i,j}$ for $i,j \in \{n_1, n_1 + 1\}$ and $s \leq k \leq d$, by genericity of $\T$ and \Cref{lem:finite-SVTs}. We show that the extra condition 
$[\Q \cdot \T]_{1, n_1, \dots, n_1} = 0$
is enough to conclude the proof.
The top-left block of $Q_1$ is in $\rSP(2)$. Suppose that $q^{(1)}_{11} = \pm 1$, the case with $q^{(1)}_{12} = \pm 1$ is analogous. Then,
\begin{equation} \label{eq:extra-equation}
    [\Q \cdot \T]_{1, n_1, \dots, n_1} = \sum_{i_s, \dots, i_d = n_1}^{n_1 + 1} \pm q^{(s)}_{n_1, i_s} \cdots q^{(d)}_{n_1, i_d} \T_{1, n_1, \dots, n_1, i_s, \dots, i_d} = 0.
\end{equation}
This equation is independent from the equations $[\Q \cdot \T]_{n_1, \dots, n_1, {n_1 + 1}, n_1, \dots, n_1} = 0$, 
since it involves different entries of $\T$. Hence, viewing \eqref{eq:extra-equation} as a polynomial in $\Q$, every monomial must be zero by genericity of $\T$. 
The polynomial \eqref{eq:extra-equation} for generic $\T \in V$ has nonzero coefficient in all monomials $q^{(s)}_{n_1, i_s} \cdots q^{(d)}_{n_1, i_d}$ for $i_s, \dots, i_d \in \{ n_1, n_1 + 1\}$,
except for $q^{(s)}_{n_1, n_1} \cdots q^{(d)}_{n_1, n_1}$ because $\T_{1, n_1, \dots, n_1} = 0$. Therefore, all these monomial are zero if and only if $q^{(k)}_{n_1, n_1 + 1} = 0$ for all $s \leq k \leq d$. This is equivalent to $q^{(k)}_{n_1, n_1} = \pm 1$ for all $s \leq k \leq d$.
\end{proof}

Using Lemmas \ref{lem:reduction} and \ref{lem:reduction-normal-form}, we reduce the general case to the block-diagonal form.

\begin{proof}[Proof of \Cref{thm:main}]
    The case $d=2$ follows from the generic uniqueness of the singular value decomposition.
    The binary case was proved in the previous section.
    Let $d\geq 3$ and $n_d \geq 3$. Let $\T \in V$ be generic and suppose that $\T$ has two orthogonal bases of singular vector tuples. That is, there exists $\Q = (Q_1, \dots, Q_d) \in \rO(n_1) \times \cdots \times \rO(n_d)$ such that $\Q \cdot \T \in V$ and the sets $\{e_j^{\otimes d} \mid j \in [n_1] \}$ and $\{q_{j}^{(1)} \otimes \cdots \otimes q_{j}^{(d)} \mid j \in [n_1]\}$ are orthogonal bases of singular vector tuples. We show that these two sets are equal up to sign flips.
    
    For each $k \in [d]$, we write
    $Q_k = P_k^\top R_k P_k$, where $P_k \in \rO(n_k)$ and $R_k$ has block-diagonal structure \eqref{eq:orthogonal-blocks}. Let $\bP = (P_1, \dots, P_d)$, $\bR = (R_1, \dots, R_d)$, and $\bP^{\top} = (P_1^\top, \dots, P_d^\top)$. 
    
    Let $\cS = \bP \cdot \T \in \bP \cdot V$.  Then $\{ \bP \cdot e_j^{\otimes d} \mid j \in [n_1]\} = \{ p_j^{(1)} \otimes \cdots \otimes p_j^{(d)} \mid j \in [n_1]\}$ are singular vector tuples of $\cS$,
    by \Cref{lem:equivariance-singular-vectors}.
    Moreover, we have that $\bR \cdot \cS = \bP \cdot(\Q \cdot \T) \in \bP \cdot V$. So~$\{ \bP \cdot q_j^{(1)} \otimes \cdots \otimes q_j^{(d)} \mid j \in [n_1]\} = \{ \bR \cdot p_j^{(1)} \otimes \cdots \otimes p_j^{(d)} \mid j \in [n_1]\}$ are singular vector tuples of $\cS$, by \Cref{lem:equivariance-singular-vectors}.
        Since $\T$ is generic  in $V$, $\cS$ is generic in $\bP \cdot V$. Hence $\{ p_j^{(1)} \otimes \cdots \otimes p_j^{(d)} \mid j \in [n_1]\}$ and $\{ \bR \cdot p_j^{(1)} \otimes \cdots \otimes p_j^{(d)} \mid j \in [n_1]\}$ are the same set of singular vector tuples of $\cS$ up to sign flips, 
    using \Cref{lem:reduction} together with \Cref{lem:reduction-normal-form} in the basis $\{ p_{j_1}^{(1)} \otimes \cdots \otimes p_{j_d}^{(d)}\} \mid j_k \in [n_k] \}$.
    Applying $\bP^\top$ we see that $\{ e_j^{\otimes d} \mid j \in [n_1]\}$ and~$\{q_j^{(1)} \otimes \cdots \otimes q_j^{(d)} \mid j \in [n_1]\}$ are the same set of singular vector tuples of $\bP^\top \cdot \cS = \T$ up to sign flips.
\end{proof}

\section{Uniqueness of orthogonal eigenvectors} \label{sec:proof-uniqueness-symmetric}

In this section, we prove \Cref{thm:main-symmetric}. 
As in the non-symmetric case, we start with binary tensors: we want to show that $\codim ((Q \bullet V_\sym) \cap V_\sym, V_\sym)$ is large enough for~$Q \notin \rSP(2)$ so that $\bigcup_{Q \notin \rSP(2) } (Q \bullet V_\sym) \cap V_\sym$ has positive codimension in $V_\sym$.
Specializing the proof of \Cref{prop:main-equivalence} to symmetric tensors we get the following.

\begin{proposition} \label{prop:main-equivalence-symmetric}
    \Cref{thm:main-symmetric} is equivalent to the following statement: If $\T \in V_\sym$ is generic and $Q \in \rO(n)$, then $Q \bullet \T \in V_\sym$ if and only if $Q \in \rSP(n)$.
\end{proposition}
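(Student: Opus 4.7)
The plan is to mirror the proof of Proposition~\ref{prop:main-equivalence} in the symmetric setting. Let (T1) denote Theorem~\ref{thm:main-symmetric} and (T2) the statement in this proposition. The key translation, using Lemmas~\ref{lem:Vcanonical-symmetric} and \ref{lem:equivariance-eigenvectors}, is that if $\cS \in V_\sym$ and $Q \in \rO(n)$, then the orthogonal basis of eigenvectors associated to $Q \bullet \cS$ is $\{Qe_1, \dots, Qe_n\}$ (up to sign). Moreover, $Q \in \rSP(n)$ is equivalent to $\{Qe_j : j \in [n]\} = \{\pm e_j : j \in [n]\}$. Thus a single $Q$ plays the role of the tuple $\Q$ from Proposition~\ref{prop:main-equivalence}, and $Q \in \rSP(n)$ plays the role of condition \eqref{eq:Qspec2}.

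For (T2) $\Rightarrow$ (T1), I would take a generic $\T \in X_\sym$ with two orthogonal bases of eigenvectors, that is, $\T = Q_1 \bullet \cS_1 = Q_2 \bullet \cS_2$ for some $\cS_i \in V_\sym$ and $Q_i \in \rO(n)$. Rewriting gives $\cS_2 = (Q_2^\top Q_1) \bullet \cS_1$; since $\cS_1$ is generic in $V_\sym$, the hypothesis (T2) forces $Q_2^\top Q_1 \in \rSP(n)$. Consequently the columns of $Q_1$ and $Q_2$ agree up to sign and permutation, so the two bases of eigenvectors of $\T$ coincide up to sign, giving (T1).

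For the converse (T1) $\Rightarrow$ (T2), I would exploit that $V_\sym \subset X_\sym$ with $X_\sym = \rO(n) \bullet V_\sym$, so a generic $\T \in V_\sym$ is also generic in $X_\sym$ and therefore has a unique orthogonal basis of eigenvectors (up to sign). Lemma~\ref{lem:Vcanonical-symmetric} identifies this unique basis as $\{\pm e_j\}$. If $Q \bullet \T \in V_\sym$ for some $Q \in \rO(n)$, then Lemma~\ref{lem:Vcanonical-symmetric} makes $\{e_j\}$ an orthogonal basis of eigenvectors of $Q \bullet \T$, and Lemma~\ref{lem:equivariance-eigenvectors} transports this to an orthogonal basis $\{Q^\top e_j\}$ of eigenvectors of $\T$. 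By uniqueness, $\{Q^\top e_j\} = \{\pm e_j\}$, which forces $Q \in \rSP(n)$.

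I do not anticipate a genuine obstacle here, since (T1) $\Leftrightarrow$ (T2) is a formal equivalence whose proof is a direct symmetric analog of that of Proposition~\ref{prop:main-equivalence}; each step is mechanical once the symmetric versions of the relevant lemmas are in hand. The only point requiring minor care is the genericity transfer from $V_\sym$ to $X_\sym$, which is immediate from surjectivity of the map $\rO(n) \times V_\sym \to X_\sym$: the nongeneric locus in $X_\sym$ is a proper subvariety, so its intersection with $V_\sym$ is a proper subvariety of $V_\sym$.
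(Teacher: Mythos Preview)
Your proposal is correct and is precisely what the paper does: it states only that the result follows by ``specializing the proof of Proposition~\ref{prop:main-equivalence} to symmetric tensors,'' and your write-up carries out exactly that specialization using Lemmas~\ref{lem:Vcanonical-symmetric} and~\ref{lem:equivariance-eigenvectors}. Your treatment of the genericity transfer between $V_\sym$ and $X_\sym$ matches the paper's handling in the non-symmetric case.
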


The statement in \Cref{prop:main-equivalence-symmetric} is precisely \cite[Conjecture 5.17]{mesters2022non}.
Denote the canonical basis of $\R^2$ by $\{e_0, e_1\}$. Recall that the coordinates of a tensor $\T \in (\R^2)^{\otimes d}$ are given by binary strings of length $d$. If $\T$ is symmetric, then its entry $\T_\bi$ is determined by $|\bi| = \sum_{k} i_k$.
    Hence, a symmetric tensor in $S^d(\R^2)$ is specified by $d+1$ parameters $t_0, \dots, t_d$, where $\T_\bi = t_k$ for all $\bi \in \{0,1\}^{d}$ such that $|\bi| = k$. We will follow this notation to specify entries of symmetric binary tensors. This way we have that
    \[
    V_\sym = \{ \T \in S^d(\R^2) \mid t_1 = t_{d-1} = 0\},
    \]
    and its orthogonal complement is
    \[
    V_\sym^\perp = \{ \T \in S^d(\R^2) \mid t_0 = t_2 = \cdots = t_{d-2} = t_d = 0\}.
    \]
The following matrices play a special role in computing $\codim((Q \cdot V_\sym) \cap V_\sym, V_\sym)$:
\begin{equation}\label{eq:eight}
    \pm\frac{1}{\sqrt{2}}\begin{pmatrix}
    1 & 1\\
    1 & -1
\end{pmatrix}, \pm\frac{1}{\sqrt{2}}\begin{pmatrix}
    1 & 1\\
    -1 & 1
\end{pmatrix}, \pm\frac{1}{\sqrt{2}}\begin{pmatrix}
    -1 & 1\\
    1 & 1
\end{pmatrix}, \pm\frac{1}{\sqrt{2}}\begin{pmatrix}
    1 & -1\\
    1 & 1
\end{pmatrix}.
\end{equation}
\begin{lemma}\label{lem:binaryex}
    Consider $V_\sym \subset S^d(\R^2)$ with $d \geq 3$ and let $Q \in \rO(2)$. Then,
    \[
    \codim((Q \bullet V_\sym) \cap V_\sym, V_\sym) = \begin{cases}
        0 & \text{if } Q \in \rSP(2) \\
        1 & \text{if }  d \in \{4,6 \}  \text{ and } $Q$ \text{ is one of \eqref{eq:eight} }\\
        2 & \text{otherwise}.
    \end{cases}
    \]
\end{lemma}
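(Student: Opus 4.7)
The plan is to convert this codimension into a dimension count in an $\rSO(2)$-diagonalized basis of $S^d(\R^2)\otimes\C$, and then close with a short parity case analysis on $d$.

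First, I reduce to rotations: every $Q\in\rO(2)$ factors as $R_\theta\cdot S$ with $R_\theta\in\rSO(2)$ and $S\in\rSP(2)$, and since $\rSP(2)$ preserves $V_\sym$, the codimension depends only on $\theta$. The case $\theta\in\tfrac{\pi}{2}\mathbb{Z}$ is exactly $Q\in\rSP(2)$, for which $R_\theta\bullet V_\sym = V_\sym$, giving codimension~$0$. For the rest I use the orthogonal-complement identity $\codim((Q\bullet V_\sym)\cap V_\sym,V_\sym) = 2 - \dim(V_\sym^\perp\cap (Q\bullet V_\sym^\perp))$, which reduces the problem to computing the span of the four polynomials $x_0^{d-1}x_1,\;x_0 x_1^{d-1},\;u^{d-1}v,\;u v^{d-1}$ in $S^d(\R^2)$, where $u = \alpha\cdot x$ and $v = \beta\cdot x$ for $\alpha,\beta$ the rows of $Q$; that dimension minus $2$ is the codimension.

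Next, complexify using $z = x_0 + ix_1$, $\bar z = x_0 - ix_1$, so that $R_\theta$ diagonalizes as $z^{d-j}\bar z^j\mapsto e^{i(d-2j)\theta}\,z^{d-j}\bar z^j$. A short computation gives
\[
x_0^{d-1}x_1 = \frac{1}{2^d i}\sum_{j=0}^d c_j\, z^{d-j}\bar z^j,\qquad c_j := \tbinom{d-1}{j}-\tbinom{d-1}{j-1},
\]
and the involution $x_0\leftrightarrow x_1$ (which is $z\leftrightarrow i\bar z$) yields $x_0 x_1^{d-1} = \frac{1}{2^d i}\sum_j \epsilon(-1)^j c_j\,z^{d-j}\bar z^j$ with $\epsilon := i^{2-d}$; note $c_j=0$ precisely at $j=d/2$, which is an integer only when $d$ is even. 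Substituting $z\mapsto e^{i\theta}z$ converts these to the expansions of $u^{d-1}v$ and $u v^{d-1}$, so in the $z^{d-j}\bar z^j$-basis the four vectors have $j$-th coordinate proportional (by the nonzero factor $c_j/(2^d i)$) to $(1,\epsilon(-1)^j,\zeta_j,\epsilon(-1)^j\zeta_j)$, where $\zeta_j := e^{i(d-2j)\theta}$. The invertible change of basis taking $(v_1,v_2)\mapsto v_1\pm\epsilon^{-1}v_2$, and likewise $(v_3,v_4)$, separates the span into two blocks supported respectively on $J_+ := \{\text{even } j : c_j\neq 0\}$ and $J_- := \{\text{odd } j : c_j\neq 0\}$; since these index sets are disjoint, the total dimension is the sum, and each block has dimension $1$ if the values $\zeta_j$ for $j\in J_\pm$ are all equal, and $2$ otherwise.

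Finally, the constancy condition on $J_\pm$ is equivalent to $\theta\in(\pi/g_\pm)\mathbb{Z}$, where $g_\pm$ is the gcd of pairwise differences within $J_\pm$. For $d$ odd or for $d$ even with $d\geq 8$, both $J_+$ and $J_-$ contain two integers of the relevant parity differing by~$2$, so $g_+=g_-=2$ and constancy forces $\theta\in(\pi/2)\mathbb{Z}$; every $Q\notin\rSP(2)$ then has both blocks of dimension $2$, giving codimension $2$. The only exceptional cases are $d=4$, where $J_+=\{0,4\}$ gives $g_+=4$ (with $g_-=2$), and $d=6$, where symmetrically $J_-=\{1,5\}$ gives $g_-=4$ (with $g_+=2$). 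For these the new constancy is met at $\theta\in\{\pi/4,3\pi/4,5\pi/4,7\pi/4\}$; one block collapses to dimension~$1$ while the other stays at~$2$, so the codimension equals $1$. Pairing each such rotation with the reflection $R_\theta\cdot\diag(1,-1)$ produces exactly the eight matrices of \eqref{eq:eight}. The main obstacle is the sign-and-parity bookkeeping that yields the clean $\epsilon(-1)^j$ relation between $x_0^{d-1}x_1$ and $x_0 x_1^{d-1}$; once that is in place, the remainder is the finite combinatorial case analysis above.
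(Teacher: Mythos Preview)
Your proof is correct and takes a genuinely different route from the paper's. The paper works entirely over $\R$: it symmetrizes the matrix $M_\Q$ of \Cref{lem:MQ} down to a $(d-1)\times 2$ matrix $\tilde M_Q$ whose rows are indexed by weights $k\in\{0,\dots,d\}\setminus\{1,d-1\}$, and then computes $\rank(\tilde M_Q)$ by inspecting explicit $2\times 2$ minors. The minor on rows $0,d$ vanishes (for $Q\notin\rSP(2)$) only if $q_{00}^{d-2}q_{11}^{d-2}=q_{01}^{d-2}q_{10}^{d-2}$, which together with orthogonality singles out the eight matrices of \eqref{eq:eight} when $d$ is even and nothing when $d$ is odd; a further minor (rows $0$ and $3$) then separates $d\in\{4,6\}$ from even $d\ge 8$.

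Your approach instead complexifies and passes to the $\rSO(2)$-eigenbasis $z^{d-j}\bar z^{\,j}$, where the four generators of $V_\sym^\perp + Q\bullet V_\sym^\perp$ are related by the characters $\zeta_j=e^{i(d-2j)\theta}$, and a parity change of basis splits the span into two independent blocks supported on $J_\pm$. This buys a clean structural explanation of the exceptions: deleting the single index $j=d/2$ from the even or odd support leaves two consecutive indices at distance~$2$ for all $d\ge 3$ except $d\in\{4,6\}$, where one of $J_+,J_-$ collapses to a two-point set at distance~$4$, raising $g_\pm$ from~$2$ to~$4$. The paper's argument is more elementary and avoids complex numbers, but is less transparent about \emph{why} the thresholds land at $d=4,6$; yours makes the role of the missing weight $j=d/2$ and the resulting gcd visible from the outset, at the cost of the sign bookkeeping you flag.
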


\begin{proof} 
If $Q \in \rSP(2)$, then $(Q \bullet V_\sym) \cap V_\sym = V_\sym$, so $\codim((Q \bullet V_\sym) \cap V_\sym, V) = 0$. Suppose $Q \notin \rSP(2)$. Adapting the proof of \Cref{lem:MQ} to symmetric tensors we get that $\codim((Q \bullet V_\sym) \cap V_\sym, V_\sym) = 2 - \dim((Q \bullet V_\sym^\perp) \cap V_\sym^\perp)$. 

Let $\Q = (Q, \dots, Q) \in \rO(2)^{d}$ and consider the matrix $M_\Q$ defined in \Cref{lem:MQ}. Let $\cI = \{ \bi \in \{ 0,1\}^{d} \mid |\bi| \in \{1, d-1 \}\}$. The rows of $M_\Q$ are indexed by $\{0,1\}^{d} \setminus \cI$ and its columns are indexed by $\cI$. Take $d-1$ rows of $M_\Q$, one for each weight of binary strings in $\{0,1\}^{d} \setminus \cI$. The resulting $(d-1) \times 2d$ submatrix does not depend on the choice of rows. 
Summing the columns indexed by binary strings in $\cI$ with the same weight we get $\tilde{M}_Q \in \R^{(d-1) \times 2}$. We index $\tilde{M}_Q$ by the weight of the corresponding binary strings.
That is, its columns are indexed by $\{1,d-1 \}$ and its rows are indexed by $\{0,2,\dots, d-2,d \}$. Then, the $k$-th row of~$\tilde{M}_Q$ for~$k = 0,2,\dots,d-2,d$ is
\[
\begin{pmatrix}
    k q_{00}^{d-k} q_{10}^{k-1}q_{11} + (d-k) q_{00}^{d-k-1} q_{01} q_{10}^k, & k q_{01}^{d-k} q_{10} q_{11}^{k-1} + (d-k) q_{00} q_{01}^{d-k-1} q_{11}^k
\end{pmatrix}.
\]
We have $\codim((Q \bullet V_\sym) \cap V_\sym, V_\sym) = \rank(\tilde{M}_Q)$, adapting the proof of \Cref{lem:MQ} to symmetric tensors. Suppose that $\rank(\tilde{M}_Q) \leq 1$. 
The submatrix of $\tilde{M}_Q$ obtained by taking the rows indexed by $0$ and $d$ is
\[
d\begin{pmatrix}
    q_{00}^{d-1}q_{01} & q_{00}q_{01}^{d-1} \\
    q_{10}^{d-1}q_{11} & q_{10}q_{11}^{d-1}
\end{pmatrix}
\]
The $2 \times 2$ minor vanishes if $\rank(M_Q) \leq 1$, which gives
\begin{equation} \label{eq:dmin2}
    (d^2 q_{00}q_{01}q_{10}q_{11})(q_{00}^{d-2}q_{11}^{d-2} - q_{01}^{d-2}q_{10}^{d-2}) = 0
\end{equation}
No entry of $Q$ is zero, 
since $Q \notin \rSP(2)$, so the second factor above must vanish.
If $d$ is odd, \eqref{eq:dmin2} cannot hold because $Q$ is orthogonal (so $\det(Q)=\pm 1$) and in this case $\rank(\tilde{M}_Q) =  2$. If $d$ is even then \eqref{eq:dmin2} holds if and only if $Q$ is one of the eight cases in \eqref{eq:eight}. Indeed, under \eqref{eq:dmin2}, the Hadamard square of $Q$ has rank 1. Given that this Hadamard square is a doubly stochastic matrix (both rows and columns sum up to $1$), we conclude that $q_{11}^2=q_{12}^2=q_{21}^2=q_{22}^2=1/2$ giving the eight exceptional cases in \eqref{eq:eight}. Assume that $d$ is even and that
$$
Q\;=\;\frac{1}{\sqrt{2}}\begin{pmatrix}
    1 & 1\\
    1 & -1
\end{pmatrix}.
$$
Calculations for the remaining cases are analogous. We have
\[
\tilde{M}_Q^\top = 
\begin{pmatrix}
    1 & 0 & -1\\
    1 & 0 & -1
\end{pmatrix} \text{ for } d=4, 
\quad 
\tilde{M}_Q^\top  = \frac{1}{4}
\begin{pmatrix}
    3 & 1 & 0 & -1 & -3 \\
     3 & 1 & 0 & -1 & -3
\end{pmatrix} \text{ for } d=6,
\]
so $\rank(\tilde{M}_Q) = 1$. If $d \geq 8$, then $\rank(\tilde{M}_Q) = 2$, since the submatrix of $\tilde{M}_Q$ obtained by taking the rows indexed by $0$ and $3$ is
\[
\frac{1}{2^{d/2}}
\begin{pmatrix}
    d & d \\
    d-6 & 6-d
\end{pmatrix}. \qedhere
\]
\end{proof}

\begin{proof}[Proof of \Cref{thm:main-symmetric}]
First, let $n=2$. Then
\[
\codim \left(\bigcup_{Q \in \rO(2) \setminus \rSP(2) } (Q \bullet V_\sym) \cap V_\sym,  V_\sym \right) > 0,
\]
by Proposition~\ref{lem:binaryex}, 
because $\dim\rO(2)=1$. This proves Proposition~\ref{prop:main-equivalence-symmetric} and also Theorem~\ref{thm:main-symmetric} for binary tensors.
Now, let $n \geq 3$. Using \Cref{thm:main-symmetric} for binary tensors, we can specialize \Cref{lem:reduction-normal-form} to the symmetric case and find that the statement in \Cref{prop:main-equivalence-symmetric} is true when $Q$ is a block diagonal matrix as in \eqref{eq:orthogonal-blocks}. 
Considering $S^d(\R^n) \subset (\R^n)^{\otimes d}$, \Cref{lem:reduction} specializes to symmetric tensors as follows. Fix $Q\in \rO(n)$ and let $Q=P^\top R P$ with $P \in \rO(n)$. Let $\T\in S^d(\R^n)$ and let $\cS=P \bullet \T$. Then, $\T \in V_\sym$ and $Q\bullet \T\in V_\sym$ if and only if $\cS \in P\bullet V_\sym$ and $R\bullet \cS\in P\bullet V_\sym$. The coordinates of $\cS$ and $R \bullet \cS$ in the basis $\{p_{i_1} \otimes \cdots \otimes p_{i_d} \mid i_k \in [n] \}$ are $\T_{i_1,\ldots,i_d}$ and $(Q\bullet \T)_{i_1,\ldots,i_d}$ respectively. \Cref{thm:main-symmetric} follows, specializing the argument in the proof of \Cref{thm:main} to the symmetric case.
\end{proof}

\section{The varieties of tensors} \label{sec:varieties}

 In this section, we study the varieties of tensors with orthogonal eigenvectors or singular vectors. We compute the dimension of $X$ and $X_{\sym}$ using their parametrizations and the generic uniqueness results. Since they are not full-dimensional for $d\geq 3$, this implies that a general tensor does not have an orthogonal basis of singular vector tuples (or eigenvectors in the symmetric case). We also give an implicit description of (the closure of) $X_{\sym} \subseteq (\R^2)^{\otimes d}$.

\subsection{Dimension of $X$ and $X_{\rm sym}$} \label{sec:dimension}

We start with the symmetric case. Recall that $\dim S^d(\R^n) = \binom{n+d-1}{d}$. We find the dimension of $X_\sym \subseteq S^d({\R^n})$.

\begin{proposition} \label{thm:dimensionXsym}

    Let $X_\sym \subseteq S^d(\R^n)$ be the set of symmetric tensors with an orthogonal basis of eigenvectors. For $d = 2$, $\dim X_\sym = \binom{n+1}{2}$.
    For $d \geq 3$, $\dim X_\sym = \binom{n+d-1}{d}-\binom{n}{2}$.
\end{proposition}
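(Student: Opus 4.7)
The plan is to parametrize $X_\sym$ as the image of the map
\[
\phi : \rO(n) \times V_\sym \longrightarrow S^d(\R^n), \qquad (Q,\T) \longmapsto Q \bullet \T,
\]
which is surjective onto $X_\sym$ by \Cref{prop:X-symmetric}, and then apply the fiber dimension theorem after establishing generic finiteness of $\phi$ using \Cref{thm:main-symmetric}.

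For the case $d=2$, I would simply note that $V_\sym$ is the set of diagonal matrices and the spectral theorem gives $X_\sym = S^2(\R^n)$, so $\dim X_\sym = \binom{n+1}{2}$, settling this case immediately.

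For $d \geq 3$, the first step is to compute $\dim V_\sym$. The space $S^d(\R^n)$ has dimension $\binom{n+d-1}{d}$, with coordinates indexed by multisets of size $d$ from $[n]$. The defining conditions $\T_{i,j,\dots,j}=0$ for $i \neq j$ correspond to the multisets in which one index appears exactly $d-1$ times and a distinct index appears once; since $d \geq 3$ these are $n(n-1)$ distinct multisets (the pair $(i,j)$ and $(j,i)$ give different multisets), so the constraints are linearly independent and $\dim V_\sym = \binom{n+d-1}{d} - n(n-1)$. Combined with $\dim \rO(n) = \binom{n}{2}$, this yields the upper bound
\[
\dim X_\sym \;\leq\; \dim \rO(n) + \dim V_\sym \;=\; \binom{n+d-1}{d} - \binom{n}{2}.
\]

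The second step is to show this bound is attained by proving $\phi$ is generically finite-to-one. Suppose $(Q,\T)$ and $(Q',\T')$ both map to $\cS \in X_\sym$. Writing $R = Q^\top Q'$, this forces $\T' = R \bullet \T$ with $R \in \rO(n)$ and $R \bullet \T \in V_\sym$. By the equivalent formulation of \Cref{thm:main-symmetric} given in \Cref{prop:main-equivalence-symmetric}, for generic $\T \in V_\sym$ this forces $R \in \rSP(n)$, which is a finite set. Hence the fiber of $\phi$ over a generic point of $X_\sym$ is finite (i.e.\ zero-dimensional). Applying the fiber dimension theorem (e.g.\ \cite[Chap.~1, \S~8, Thm.~3]{mumford2004red} in the algebraic setting, or the constant rank theorem for the real semialgebraic image) gives equality in the dimension count:
\[
\dim X_\sym \;=\; \binom{n}{2} + \binom{n+d-1}{d} - n(n-1) \;=\; \binom{n+d-1}{d} - \binom{n}{2}.
\]

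The only genuine subtlety is invoking the fiber dimension theorem over $\R$ with the Lie group $\rO(n)$ in the domain; this is handled in the standard way by passing to the smooth locus or complexifying, noting that the parametrization and the image are both real algebraic and that uniqueness up to $\rSP(n)$ holds on a Zariski-open subset of $V_\sym$. Everything else is a direct dimension count.
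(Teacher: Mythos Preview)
Your proposal is correct and follows essentially the same approach as the paper: handle $d=2$ via the spectral theorem, and for $d\ge 3$ compute $\dim V_\sym=\binom{n+d-1}{d}-n(n-1)$, use the parametrization $\phi:\rO(n)\times V_\sym\to X_\sym$, and apply the fiber dimension theorem after invoking the generic uniqueness result (\Cref{thm:main-symmetric}, equivalently \Cref{prop:main-equivalence-symmetric}) to get generic finiteness of the fibers. Your write-up is slightly more explicit about the fiber analysis and the real/complex subtlety, but the argument is the same.
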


\begin{proof}
When $d=2$, the set $V_\sym$ is defined by the $\binom{n}{2}$ constraints $T_{ij}=0$ for $i<j$. 
It is the set of diagonal matrices. 
The spectral theorem says that $X_\sym = S^2(\R^n)$. We have $\dim \rO(n)=\binom{n}{2}$, $\dim V_\sym=n$, and $\dim X_\sym =\binom{n+1}{2}$. 

When $d \geq 3$, $V_\sym$ has codimension $2\binom{n}{2}$ in $S^d(\R^n)$. 
So $\dim V_\sym = \dim S^d(\R^n) - n(n-1) = \binom{n+d-1}{d}-n(n-1)$, and $\dim\rO(n) = \binom{n}{2}$.
    The set $X_\sym \subset S^d(\R^n)$ is parametrized by 
    \[
    \phi: \rO(n) \times V_\sym \to X_\sym, (Q, \T) \mapsto Q \bullet \T.
    \]
    This parametrization is generically finite-to-one, by \Cref{thm:main-symmetric}. So, by the fiber dimension theorem, we get
    \[
    \dim X_\sym = \dim \rO(n) + \dim V_\sym = \binom{n+d-1}{d}-\binom{n}{2}. \qedhere
    \]
\end{proof}

\begin{remark}
    The variety of symmetric odeco tensors in $S^d(\R^n)$ has dimension $\binom{n+1}{2}$ \cite{robeva2016orthogonal}, so it is smaller than $X_\sym$.
\end{remark}

Consider now $X\subseteq \R^{n_1\times\cdots\times n_d}$. Recall that we assume $n_1 \leq \cdots \leq n_d$.

\begin{proposition} \label{thm:dimensionX}
    Let $X \subseteq \R^{n_1 \times \cdots \times n_d}$ be the set of tensors with an orthogonal basis of singular vector tuples. For $d=2$, $\dim X = n_1 n_2$. For $d \geq 3$, $\dim X = n_1 \cdots n_d - d \binom{n_1}{2}$.
\end{proposition}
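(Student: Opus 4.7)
The plan is to mirror the argument of \Cref{thm:dimensionXsym} for the symmetric case, using the parametrization
\[
\phi \colon \rO(n_1) \times \cdots \times \rO(n_d) \times V \to X, \quad (\Q, \cS) \mapsto \Q \cdot \cS,
\]
which is surjective by \Cref{prop:X}. The case $d=2$ is immediate from the SVD: every matrix has an orthogonal basis of singular vector pairs, so $X = \R^{n_1 \times n_2}$ and $\dim X = n_1 n_2$. Assume $d \geq 3$ from now on.

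I would first record the dimensions of the two factors of the domain. The orthogonal group $\rO(n_k)$ has dimension $\binom{n_k}{2}$, giving $\dim(\rO(n_1) \times \cdots \times \rO(n_d)) = \sum_{k=1}^d \binom{n_k}{2}$. The linear subspace $V$ is cut out by the equations $\cT_{j,\ldots,j,i_k,j,\ldots,j} = 0$ for $j \in [n_1]$, $k \in [d]$, and $i_k \in [n_k] \setminus \{j\}$, where $i_k$ sits in position $k$. For $d \geq 3$, two such multi-indices are equal only if some position outside $\{k, k'\}$ carries both diagonal values $j$ and $j'$; such a position exists since $d \geq 3$, forcing $j = j'$, and comparing the remaining positions then forces $k = k'$ and $i_k = i_{k'}'$. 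Hence the $n_1 \sum_{k=1}^d (n_k - 1)$ equations involve distinct tensor entries, and $\dim V = n_1 \cdots n_d - n_1 \sum_{k=1}^d (n_k - 1)$.

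Next I would determine the generic fiber of $\phi$. By \Cref{thm:main}, together with the explicit finite-ambiguity description for $(2,2,2,2)$ tensors in \Cref{thm:main-2222}, a generic $\cT \in X$ has only finitely many orthogonal bases of singular vector tuples. Thus, for $(\Q, \cS) \in \phi^{-1}(\cT)$, the first $n_1$ columns of each $Q_k$ are determined up to a finite group (permutations, sign flips, and in the $(2,2,2,2)$ case a further four-fold ambiguity), while the remaining $n_k - n_1$ columns of $Q_k$ for $k \geq 2$ are free to range over orthonormal bases of the orthogonal complement, each contributing $\binom{n_k - n_1}{2}$ continuous parameters; the tensor $\cS$ is then determined by $\cS = (Q_1^\top,\dots,Q_d^\top) \cdot \cT$. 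So the generic fiber of $\phi$ has dimension $\sum_{k=2}^d \binom{n_k - n_1}{2}$, and the fiber dimension theorem yields
\[
\dim X \;=\; \sum_{k=1}^d \binom{n_k}{2} \;+\; \dim V \;-\; \sum_{k=2}^d \binom{n_k - n_1}{2}.
\]

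The last step is an algebraic simplification: using $\binom{n_k}{2} - \binom{n_k - n_1}{2} = \tfrac{1}{2} n_1(2n_k - n_1 - 1)$ for $k \geq 2$, the right-hand side collapses to $n_1 \cdots n_d - d\binom{n_1}{2}$ after all the terms involving $\sum_{k \geq 2} n_k$ cancel. The step that requires the most care is the fiber analysis: one must check that the trailing blocks $\tilde Q_k \in \rO(n_k - n_1)$ really do vary independently over the generic fiber, which follows because $V$ imposes no constraint tying the trailing coordinates of $\cS$ to its leading $n_1 \times \cdots \times n_1$ subtensor, so the structure described in \Cref{prop:main-equivalence} exhausts the fiber and its positive-dimensional part comes only from these orthogonal complements.
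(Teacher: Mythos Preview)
Your proof is correct and follows essentially the same approach as the paper: the surjective parametrization $\phi$ from \Cref{prop:X}, the dimension counts for $\rO(n_k)$ and $V$, the fiber analysis via \Cref{thm:main} and \Cref{prop:main-equivalence}, and the fiber dimension theorem, followed by the same algebraic simplification. Your explicit handling of the $(2,2,2,2)$ case via \Cref{thm:main-2222} and your check that the defining equations of $V$ are distinct are slight elaborations on points the paper states more tersely.
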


\begin{proof}
    If $d=2$, then $X = \R^{n_1 \times n_2}$ by the singular value decomposition.
    If $d \geq 3$, then $\dim V = n_1 \dots n_d - \sum_{k=1}^d n_1 (n_k - 1)$.
    The set $X \subset \R^{n_1 \times \cdots \times n_d}$ is parametrized by
    \[
    \phi: \rO(n_1) \times \cdots \times \rO(n_d) \times V \to X, (Q_1, \dots, Q_d, \T) \mapsto (Q_1, \dots, Q_d) \cdot \T.
    \]
    Let $\T \in X$ be generic.
    Write $\T = \bP \cdot \cS$ with $\bP \in \rO(n_1) \times \cdots \times \rO(n_d)$ and $\cS \in V$. Note that $\T$ can also be written as $\T = \bP \cdot \Q^\top \cdot(\Q \cdot \cS)$ for $\Q \in \rO(n_1) \times \cdots \times \rO(n_d)$. By  \Cref{thm:main}, \Cref{prop:main-equivalence} and the genericity of $\T$, we have that $\Q \cdot \cS \in V$ if and only if $Q_1 \in \rSP(n_1)$ and $Q_k = \begin{pmatrix}
        Q_1 D_k & 0 \\
        0 & R_k
    \end{pmatrix}$ for some $D_k$ diagonal with diagonal entries $\pm 1$ and some $R_k \in \rO(n_k - n_1)$ for all $k \in [d]$. Therefore, the dimension of the fiber $\phi^{-1}(\T)$ is $\sum_{k=1}^d\dim \rO(n_k - n_1) = \sum_{k=1}^d \binom{n_k - n_1}{2}$. Hence, by the fiber dimension theorem, we get
    \begin{align*}
        \dim X & = \dim\rO(n_1) \times \cdots \times \rO(n_d) + \dim(V) - \dim \phi^{-1}(\T) = \\
                & = n_1 \cdots n_d + \sum_{k=1}^d \binom{n_k}{2}  - n_1(n_k - 1) - \binom{n_k - n_1}{2} = n_1 \cdots n_d - d \binom{n_1}{2}.
    \end{align*}
    The last equality follows because $ n_1(n_k - 1) + \binom{n_k - n_1}{2} - \binom{n_k}{2} $ counts the unordered pairs in~$[n_1] \subseteq [n_k]$, which equals $\binom{n_1}{2}$.
\end{proof}

Let $\overline{X}$ and $\overline{X_\sym}$ denote the Zariski closure of $X$ and $X_\sym$, respectively.

\begin{proposition} \label{prop:Xirredicible}
    Let $X \subset \R^{n_1 \times \cdots \times n_d}$ be the set of tensors with an orthogonal basis of singular vector tuples.
    Then $X = \left( \mathrm{SO}(n_1) \times \cdots \times \mathrm{SO}(n_d) \right) \cdot V$, so $\overline{X}$ is an irreducible variety.
\end{proposition}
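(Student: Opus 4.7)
The plan is to prove $X = (\mathrm{SO}(n_1)\times\cdots\times\mathrm{SO}(n_d))\cdot V$ first, and then deduce irreducibility of $\overline{X}$ from the fact that $X$ is the image of an irreducible set under a polynomial map.

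For the first step, the inclusion $\supseteq$ is immediate since $\mathrm{SO}(n_k)\subseteq \mathrm{O}(n_k)$. For $\subseteq$, the key observation is that the defining relations of $V$ (namely, vanishing of certain coordinates) are preserved by coordinate sign flips. Concretely, for each $k\in[d]$ let $D_k = \diag(-1,1,\ldots,1)\in\mathrm{O}(n_k)$. Acting by $(D_1,\ldots,D_d)$ on a tensor only changes signs of its coordinates, so $(D_1,\ldots,D_d)\cdot V = V$. Given any $\Q = (Q_1,\ldots,Q_d)\in\mathrm{O}(n_1)\times\cdots\times\mathrm{O}(n_d)$ and $\cT\in V$, for each $k$ with $\det Q_k = -1$ set $\tilde{Q}_k = Q_k D_k\in\mathrm{SO}(n_k)$; for the remaining $k$ take $\tilde{Q}_k = Q_k$ and let $D_k = I_{n_k}$. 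Then $\tilde{\cT} := (D_1,\ldots,D_d)\cdot\cT\in V$ and
\[
\Q\cdot \cT = (\tilde Q_1,\ldots,\tilde Q_d)\cdot \tilde{\cT} \in (\mathrm{SO}(n_1)\times\cdots\times\mathrm{SO}(n_d))\cdot V,
\]
giving the desired equality.

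For the second step, $V$ is a linear subspace of $\R^{n_1\times\cdots\times n_d}$, hence irreducible as an algebraic variety. Each $\mathrm{SO}(n_k)$ is an irreducible real algebraic variety (its complexification $\mathrm{SO}(n_k,\C)$ is smooth and connected, hence irreducible). Therefore the product $\mathrm{SO}(n_1)\times\cdots\times\mathrm{SO}(n_d)\times V$ is irreducible. The orbit map
\[
\phi:\mathrm{SO}(n_1)\times\cdots\times\mathrm{SO}(n_d)\times V \to \R^{n_1\times\cdots\times n_d},\quad (Q_1,\ldots,Q_d,\cT)\mapsto \Q\cdot\cT,
\]
is a polynomial (regular) morphism, and the Zariski closure of the image of an irreducible set under a regular morphism is irreducible. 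Since $X = \phi(\mathrm{SO}(n_1)\times\cdots\times\mathrm{SO}(n_d)\times V)$, we conclude that $\overline{X}$ is irreducible.

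There is no serious obstacle here; the only non-routine point is recognizing that sign flips can be absorbed because $V$ is cut out by coordinate vanishing conditions, and the rest is a standard image-of-irreducible argument. The main thing to be careful about is citing the correct notion of irreducibility for the real algebraic group $\mathrm{SO}(n_k)$, for which passing to the complexification (or invoking smoothness plus connectedness in the Euclidean topology of a real algebraic group) suffices.
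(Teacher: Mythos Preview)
Your proof is correct and follows essentially the same approach as the paper: both absorb the sign ambiguity by multiplying factors with negative determinant by $\diag(-1,1,\ldots,1)$ and using that this preserves $V$, then conclude irreducibility from $X$ being the image of an irreducible set under a polynomial map. The only cosmetic difference is that you apply all the sign flips simultaneously while the paper does one factor at a time, and you add an explicit justification of the irreducibility of $\mathrm{SO}(n_k)$ that the paper leaves implicit.
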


\begin{proof}
    Let $\T = \Q \cdot \cS \in X$ with $\Q \in \rO(n_1) \times \cdots \times \rO(n_d)$ and $\cS \in V$. If $Q_k \in \rO(n_k) \setminus \mathrm{SO}(n_k)$ for some $k \in [d]$, we can flip the sign of the first column of $Q_k$ and the first $k$-th slice of $\cS$. That is, let $\tilde{I} = \mathrm{diag}(-1, 1,\dots, 1) \in \R^{n_k \times n_k}$, then $Q_k \tilde{I} \in \mathrm{SO}(n_k)$, $(I, \dots, I, \tilde{I}, I, \dots, I) \cdot \cS \in V$ and
    $
    \T = (Q_1, \dots, Q_k\tilde{I}, \dots, Q_d) \cdot \left( (I, \dots, \tilde{I}, \dots, I) \cdot \cS \right)
    $,
    because $\tilde{I}^2 = I$. Since this holds for all $k$, $\T \in (\mathrm{SO}(n_1) \times \cdots \times \mathrm{SO}(n_d)) \cdot V$. Hence, $X$ is the image of a product of irreducible varieties under a polynomial map, so $\overline{X}$ is irreducible.
\end{proof}

Specializing the proof of \Cref{prop:Xirredicible} to the symmetric case we get the following.

\begin{proposition} \label{prop:Xirreducible-symmetric}
    Let $X_\sym \subset S^d(\R^n)$ be the set of symmetric tensors with an orthogonal basis of eigenvectors. Then $X_\sym = \rSO(n) \bullet V_\sym$, so $\overline{X_\sym}$ is an irreducible variety.
\end{proposition}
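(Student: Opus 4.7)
The plan is to adapt the argument used in the proof of Proposition~\ref{prop:Xirredicible} to the symmetric setting. The key observation is that in the non-symmetric case we had the flexibility to sign-correct one factor at a time by a matrix $\tilde{I} = \diag(-1, 1, \dots, 1)$ while compensating on the corresponding slice of $\cS$. In the symmetric case, the single matrix $Q$ acts diagonally on all $d$ factors, so the correction must also be applied simultaneously to all factors; luckily, doing so preserves the symmetric setup.

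First, I would fix $\tilde{I} = \diag(-1, 1, \dots, 1) \in \R^{n \times n}$ and verify the two invariance statements needed:
\begin{enumerate}
    \item[(i)] If $Q \in \rO(n) \setminus \rSO(n)$, then $Q\tilde{I} \in \rSO(n)$, since $\det(Q\tilde{I}) = -\det(Q) = 1$.
    \item[(ii)] $\tilde{I} \bullet V_\sym = V_\sym$. The action of $\tilde{I}$ merely flips signs of coordinates, so it preserves symmetry and the defining zero conditions $\T_{i,j,\dots,j}=0$ for $i \neq j$.
\end{enumerate}

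Next, given $\T \in X_\sym$, write $\T = Q \bullet \cS$ with $Q \in \rO(n)$ and $\cS \in V_\sym$. If $\det(Q) = 1$ there is nothing to do. Otherwise, using $\tilde{I}^2 = I_n$ and the composition property of the group action,
\[
\T = Q \bullet \cS = Q \bullet (\tilde{I} \bullet \tilde{I} \bullet \cS) = (Q\tilde{I}) \bullet (\tilde{I} \bullet \cS).
\]
By (i) the first factor lies in $\rSO(n)$ and by (ii) the second lies in $V_\sym$. This proves $X_\sym \subseteq \rSO(n) \bullet V_\sym$; the reverse inclusion is immediate from $\rSO(n) \subset \rO(n)$. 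Hence $X_\sym = \rSO(n) \bullet V_\sym$.

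Finally, for irreducibility of $\overline{X_\sym}$: the group $\rSO(n)$ is a connected algebraic group, hence irreducible, and $V_\sym$ is a linear subspace of $S^d(\R^n)$, hence irreducible. The product $\rSO(n) \times V_\sym$ is therefore irreducible, and $X_\sym$ is its image under the polynomial map $(Q, \cS) \mapsto Q \bullet \cS$. The Zariski closure of the image of an irreducible set under a regular map is irreducible, so $\overline{X_\sym}$ is irreducible. There is no serious obstacle here; the only point to be careful about is that the sign-correction is symmetric (acts on all $d$ factors), which is why checking invariance of $V_\sym$ under $\tilde{I} \bullet (-)$ is the essential step.
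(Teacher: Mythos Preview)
Your proof is correct and is exactly the specialization of the proof of Proposition~\ref{prop:Xirredicible} that the paper intends: use $\tilde{I}=\diag(-1,1,\dots,1)$ to correct the determinant of $Q$, observe that $\tilde{I}\bullet(-)$ preserves $V_\sym$, and conclude irreducibility from $\rSO(n)\times V_\sym$ being irreducible. The paper does not spell out the symmetric argument, so your write-up fills in precisely what was left implicit.
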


\begin{remark}
    The sets $X$ and $X_\sym$ are closed with respect to the Euclidean topology. Let $\mathrm{cl}(X)$ denote the Euclidean closure of $X$. Then we have that $\T \in \mathrm{cl}(X)$ if and only if $\inf_{\Q \in \rO(n_1) \times \cdots \times \rO(n_d)} \mathrm{dist}(\Q \cdot \T, V) = 0$. This infimum is achieved at a minimum because we are optimizing a continuous function over a compact set. The argument for $X_\sym$ is analogous.
\end{remark}

\subsection{The variety $X_\sym$ for symmetric binary tensors} \label{sec:variety-binary}    
    Let $n=2$, $d \geq 3$ and consider $X_\sym \subset S^d(\R^2)$. It has codimension one, by \Cref{thm:dimensionXsym}. Therefore, its Zariski closure $\overline{X_\sym}$ is defined by one polynomial. In this section, we find this polynomial.
     We adopt the notation from \Cref{sec:proof-uniqueness-symmetric}: we specify the coordinates of a symmetric tensor $\T \in S^d(\R^2)$ with $d+1$ parameters $t_0, \dotsm, t_d$, where $\T_\bi = t_{|\bi|}$ for all $\bi \in \{ 0,1\}^{d}$. Throughout this section, let $\ri$ be the imaginary unit, i.e., $\ri^2 + 1 = 0$.
     
    \begin{theorem} \label{thm:variety-binary-symmetric}
        For $d \geq 3$, the variety $\overline{X_\sym} \subset S^d(\R^2)$ is the vanishing locus of
        \[ 
        \frac{\Res_z\left(F(z), z^d F(-1/z) \right)}{F(\ri)F(-\ri)}
        \]
        where $F(z) = \sum_{k=0}^d \left( (k+1) \binom{d}{k+1}t_{k+1} - (d-k+1)\binom{d}{k-1}t_{k-1}\right)z^k$ and $t_{-1} = t_{d+1} = 0$.
    \end{theorem}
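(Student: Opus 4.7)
My plan is to reduce the existence of an orthogonal basis of eigenvectors to a univariate algebraic condition on a single polynomial, and then recognize that condition as the vanishing of a resultant with the spurious complex self-orthogonal factors $F(\pm\ri)$ removed. First, I would associate to $\T \in S^d(\R^2)$ the binary form $p(x,y) = \sum_{k=0}^d \binom{d}{k} t_k\, x^{d-k} y^k$. A direction $v = (x,y)$ is an eigenvector iff $\nabla p(x,y)$ is parallel to $v$, equivalently iff $G(x,y) := y\,\partial_x p - x\,\partial_y p$ vanishes at $v$. Dehomogenizing via $z = y/x$ gives $F(z) = -G(1,z)$, which matches the formula in the statement by a direct binomial expansion. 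Hence the roots $\alpha \in \C$ of $F$ parametrize eigenvector directions $[1:\alpha]$, while the remaining direction $[0:1]$ is an eigenvector precisely when $F$ drops degree, i.e.\ when $c_d := -d\, t_{d-1} = 0$.

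Next, I would translate orthogonality. Two directions $[1:\alpha]$ and $[1:\beta]$ are orthogonal iff $1 + \alpha\beta = 0$, while $[1:\alpha] \perp [0:1]$ iff $\alpha = 0$ (forcing $F(0) = d\,t_1 = 0$). Setting $H(z) := z^d F(-1/z)$, the nonzero roots of $H$ are exactly $\{-1/\alpha_i\}$, and $z = 0$ is a root of $H$ iff $c_d = 0$. Therefore $\T$ has an orthogonal basis of eigenvectors over $\C$ iff $F$ and $H$ share a projective root in $\P^1(\C)$, equivalently iff $\Res_z(F, H) = 0$. The catch is that $z = \pm\ri$ satisfies $-1/z = z$, so whenever $F(\pm\ri) = 0$ the value $\pm\ri$ becomes a common root automatically, without yielding two distinct orthogonal directions (the complex line $[1:\pm\ri]$ is self-orthogonal). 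The linear forms $F(\ri), F(-\ri) \in \C[t_0, \ldots, t_d]$ are coprime and each forces vanishing of $\Res_z(F, H)$, so by the Nullstellensatz both divide the resultant, and the quotient is a real polynomial.

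To conclude, I would check that this quotient cuts out $\overline{X_\sym}$ set-theoretically. On the dense open $\{c_d \ne 0\}$, the product formula for the resultant combined with $F(\ri)F(-\ri) = c_d^2 \prod_i(1 + \alpha_i^2)$ yields
\[
\frac{\Res_z(F, H)}{F(\ri)F(-\ri)} \;=\; (-1)^d\, c_d^{2d-2} \Bigl[\prod_{i<j}(1 + \alpha_i\alpha_j)\Bigr]^2,
\]
whose vanishing is equivalent to $F$ having two distinct roots with product $-1$, i.e.\ to $\T$ admitting an orthogonal pair of eigenvector directions over $\C$. Combined with the irreducibility of $\overline{X_\sym}$ from \Cref{prop:Xirreducible-symmetric} and its codimension one (\Cref{thm:dimensionXsym}), this identifies the quotient as a defining polynomial (up to a power) of the variety $\overline{X_\sym}$. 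I expect the main obstacle to be controlling the set-theoretic vanishing on the complement $\{c_d = 0\} \cup \{F(\ri)F(-\ri) = 0\}$: the product-formula argument only holds on the open stratum $\{c_d \ne 0\}$ where all roots are finite, so the degenerations (roots merging, roots moving to infinity, or real roots approaching $\pm\ri$) must be handled carefully to ensure the full vanishing locus of the quotient coincides with $\overline{X_\sym}$ rather than picking up spurious lower-dimensional pieces.
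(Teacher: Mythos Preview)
Your approach is essentially the paper's: both reduce membership in $X_\sym$ to the condition that $F(z)$ and $z^d F(-1/z)$ share a root away from $\pm\ri$, factor the resultant as $(-1)^d p^2\cdot F(\ri)F(-\ri)$ via the product formula, and finish using codimension one and irreducibility of $\overline{X_\sym}$. The only cosmetic difference is that the paper obtains $F$ by explicitly expanding the rotated coefficients $b_1(z),\,b_{d-1}(z)$ rather than via the eigenvector polynomial $y\,\partial_x p - x\,\partial_y p$; the obstacle you flag at the end is genuine, and the paper resolves it by treating $d$ odd (where the map $(t_0,\dots,t_d)\mapsto(\text{coeffs of }F)$ is onto, so $F$ is the general degree-$d$ polynomial and \Cref{lem:factors-resultant} gives irreducibility of $p$ directly) separately from $d$ even (where $F$ is not general and one instead argues via irreducibility of the complexification $X_\sym(\C)=\rSO(2,\C)\bullet V_\sym(\C)$).
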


    For example, for $d=3$, $\overline{X_\sym} \subset S^{3}(\R^2)$ is the vanishing locus of the quadric
    \[
    t_{1}^{2}-t_{0}t_{2}+t_{2}^{2}-t_{1}t_{3},
    \] the odeco equation \cite{robeva2016orthogonal}. In this case, $X_\sym = \overline{X_\sym}$ \cite{boralevi2017orthogonal}. For $d=4$, $\overline{X_\sym} \subset S^{4}(\R^2)$ is the vanishing locus of 
    \[
    2\,t_{1}^{3}-3\,t_{0}t_{1}t_{2}+t_{0}^{2}t_{3}+2\,t_{1}^{2}t_{3}-3\,t_{0}t_{2}t_{3}-2\,t_{1}t_{3}^{2}-2\,t_{3}^{3}+t_{0}t_{1}t_{4}+3\,t_{1}t_{2}t_{4}-t_{0}t_{3}t_{4}+3\,t_{2}t_{3}t_{4}-t_{1}t_{4}^{2}.
    \]
    We find that $\overline{X_\sym} \subset S^5(\R^2)$ is defined by a quartic with 43 monomials, and $\overline{X_\sym} \subset S^6(\R^2)$ is defined by a quintic with 132 monomials.
    
    We identify symmetric tensors in $S^d(\R^2)$ with homogeneous binary forms of degree $d$
    via 
    \[
    \T \leftrightarrow f_\T(x,y) = \T\left(\begin{pmatrix} x, y\end{pmatrix}, \cdots,\begin{pmatrix} x, y\end{pmatrix}
    \right) = \sum_{k=0}^d \binom{d}{k}t_k x^{d-k} y^k.
    \]
    We have $f = \sum_{k=0}^d a_k x^{d-k} y^k\in V_\sym$ if and only if $a_1 = a_{d-1} = 0$.
    Define $R \coloneqq \mathbb{Q}[a_0,\ldots, a_d]$ for indeterminates $a_0,\ldots, a_d$. We thank Carlos D'Andrea for helping us prove the following, which describes the structure of the numerator in the expression in \Cref{thm:variety-binary-symmetric}.
\begin{lemma} \label{lem:factors-resultant}
Let $d \geq 2$. The resultant of $f(x)=\sum_{k=0}^d a_k x^k$ and $g(x)=x^df(-1/x)$ factors in irreducibles in $R$ as
$\Res_x(f(x),g(x))=(-1)^dp^2\,q, $
where $q \coloneqq f(\ri)f(-\ri)$.
\end{lemma}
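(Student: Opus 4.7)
The plan is to use the product formula $\Res_x(f,g) = a_d^d \prod_{i=1}^d g(\alpha_i)$, where $\alpha_1, \ldots, \alpha_d$ are the roots of $f$ in an algebraic closure of $\mathrm{Frac}(R)$. A direct computation gives
\[
g(\alpha_i) \;=\; \alpha_i^d f(-1/\alpha_i) \;=\; a_d \prod_{j=1}^d \bigl(\alpha_i \cdot (-1/\alpha_i - \alpha_j)\bigr) \;=\; (-1)^d a_d \prod_{j=1}^d (1+\alpha_i\alpha_j),
\]
so $\Res_x(f,g) = (-1)^d a_d^{2d} \prod_{i,j}(1+\alpha_i\alpha_j)$. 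Splitting the double product into its diagonal and off-diagonal pieces gives
\[
\Res_x(f,g) \;=\; (-1)^d a_d^{2d} \Bigl(\prod_{i}(1+\alpha_i^2)\Bigr) \Bigl(\prod_{i<j}(1+\alpha_i\alpha_j)\Bigr)^{\!2}.
\]
Since $(\ri-\alpha)(-\ri-\alpha) = 1+\alpha^2$, the diagonal piece equals $a_d^{-2} f(\ri) f(-\ri) = q/a_d^2$. Setting $p := a_d^{d-1}\prod_{i<j}(1+\alpha_i\alpha_j)$, the off-diagonal piece becomes $p^2/a_d^{2(d-1)}$, and the powers of $a_d$ cancel to give the identity $\Res_x(f,g) = (-1)^d p^2 q$ in $\mathrm{Frac}(R)$.

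That $p \in R$ and that $q$ is irreducible both follow from a single computation. Write $q = f(\ri)f(-\ri) = u^2 + v^2$, where $u = \sum_k(-1)^k a_{2k}$ and $v = \sum_k(-1)^k a_{2k+1}$ are $\mathbb{Q}$-linearly independent linear forms in disjoint subsets of the variables. Over $\mathbb{Q}(\ri)$ we have $q = (u+\ri v)(u-\ri v)$, and since the two complex-conjugate factors are not individually defined over $\mathbb{Q}$, the polynomial $q$ is irreducible (in particular squarefree) in the UFD $R$. Squarefreeness of $q$ then forces any denominator of $p$ in lowest terms to be a unit, because its square would have to divide $q$, so $p \in R$.

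The main task is the irreducibility of $p$, which I would establish by descent to a localization. In $R[1/a_d]$ the substitution $e_j = (-1)^j a_{d-j}/a_d$ identifies the ring with $\mathbb{Q}[a_d^{\pm 1}, e_1, \ldots, e_d]$, and writes $p$ as the unit $a_d^{d-1}$ times $\widetilde{P} := \prod_{i<j}(1+\alpha_i\alpha_j) \in \mathbb{Q}[e_1,\ldots,e_d]$. The zero set $V(\widetilde{P})$ pulls back under the $S_d$-quotient map $\mathbb{A}^d_\alpha \to \mathbb{A}^d_e$ to the single $S_d$-orbit of hyperplanes $\bigcup_{i<j}\{1+\alpha_i\alpha_j=0\}$, so $V(\widetilde{P})$ is irreducible. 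At a generic point $\alpha^0$ of $\{1+\alpha_1\alpha_2=0\}$ with all $\alpha^0_i$ distinct, the quotient map is étale and only one factor of $\widetilde{P}$ vanishes simply; hence $\widetilde{P}$ vanishes to order one at the image point and is reduced, which together with irreducibility of $V(\widetilde{P})$ gives $\widetilde{P}$ irreducible in $\mathbb{Q}[e_1,\ldots,e_d]$, and therefore $p$ irreducible in $R[1/a_d]$. Finally, the top-degree term $\prod_i\alpha_i^{d-1} = e_d^{d-1}$ in $\widetilde{P}$ contributes a monomial $\pm a_0^{d-1}$ to $p$, so $p$ is not divisible by $a_d$, and irreducibility descends back to $R$. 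The hard part is the reducedness check for $\widetilde{P}$: ensuring that the $S_d$-pushforward of the reduced divisor $\bigcup\{1+\alpha_i\alpha_j = 0\}$ remains reduced, which is why the étale-slice observation at a generic root configuration is essential. The rest is routine resultant manipulation and a quadratic-form argument for $q$.
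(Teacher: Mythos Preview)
Your argument is correct, and the resultant computation, the splitting into diagonal and off-diagonal factors, and the irreducibility of $q=u^2+v^2$ match the paper's proof essentially line for line. Your trick of deducing $p\in R$ from the squarefreeness of $q$ (instead of invoking symmetric function theory directly) is a nice touch, and you are more careful than the paper in handling the leading coefficient: you explicitly verify $a_d\nmid p$ via the $a_0^{d-1}$ monomial in order to descend irreducibility from $R[1/a_d]$ to $R$, whereas the paper simply sets $a_d=1$ without comment.

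The one place where the approaches genuinely diverge is the irreducibility of $p$. The paper's argument is entirely elementary: working in the UFD $\mathbb{Q}[\xi_1,\ldots,\xi_d]$, the element $p=\prod_{i<j}(1+\xi_i\xi_j)$ is a product of pairwise distinct irreducibles, so any divisor is (up to unit) a product over a subset of pairs $\{i,j\}$; if that divisor lies in the symmetric subring $R$, the subset must be $S_d$-stable, hence empty or everything. Your geometric route (irreducibility of the quotient image plus an \'etale-local reducedness check) reaches the same conclusion but with heavier tools than necessary. In particular, the ``hard part'' you flag---reducedness of $\widetilde P$---is actually immediate: $\widetilde P$ is already squarefree in the larger ring $\mathbb{Q}[\alpha_1,\ldots,\alpha_d]$, and squarefreeness passes down along the inclusion $\mathbb{Q}[e_1,\ldots,e_d]\hookrightarrow\mathbb{Q}[\alpha_1,\ldots,\alpha_d]$ (if $h^2\mid\widetilde P$ in the subring, the same divisibility holds upstairs). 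So the \'etale slice is not needed. Minor terminological slip: the loci $\{1+\alpha_i\alpha_j=0\}$ are quadric hypersurfaces, not hyperplanes.
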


\begin{proof}
First, we show that $q$ is irreducible in $R$. Note that $q=f(\ri) f(-\ri)$,
is a factorization of $q$ in irreducibles in $R[\ri]$. Indeed, $q \in R$ is homogeneous of degree $2$ and $f(\ri), f(-\ri) \in R[i] \setminus R$ are homogeneous of degree $1$ in $a_0, \dots, a_d$.

Without loss of generality, let $a_d=1$. Let $\mathbb{K}$ denote the algebraic closure of the field of fractions of $R$, and let $\xi_1,\ldots, \xi_d\in \mathbb{K}$ be the roots of $f(x)$. Then 
$-1/\xi_1,\ldots, -1/\xi_d$ are the roots of $g(x).$ Using the product formula for the resultant (see, e.g., \cite{Gelfand1994DiscriminantsRA})
we obtain
\begin{equation*}\label{pq}
\Res_x(f(x),g(x))=a_0^d\prod_{k=1}^d\prod_{j=1}^d(\xi_k+{1/\xi_j})=(-1)^{d^2}\prod_{k=1}^d\prod_{j=1}^d(\xi_k\xi_j+1),
\end{equation*}
where the second equality follows using $\prod_j \xi_j = (-1)^da_0$.
Collecting factors with $k=j$,
\[
\prod_{j=1}^d(\xi_j^2+1)= \Res_x(f(x), x^2+1)=f(\ri) f(-\ri)=q.
\]
Hence $\Res_x(f(x),g(x)) = p^2 q$, where 
$p \coloneqq \prod_{1\leq j<k\leq d}(\xi_k\xi_j+1)$.
Since $p$ is invariant under permuting the $\xi_j$, we deduce that $p\in R$. We show that $p$ is irreducible.
Suppose $p = p_1 p_2$ with  $p_1, p_2 \in R$ non-constant. Then $p_1$ is
invariant under permuting the $\xi_j$'s.
Hence, if one $(\xi_k\xi_j+1)$ divides $p_1$, all factors of this form must. Hence $p_2$ is constant, a contradiction.
\end{proof} 
    
We are now ready to prove \Cref{thm:variety-binary-symmetric}. Testing whether $\T \in X_\sym \subset S^d(\R^2)$ reduces to computing the resultant of two univariate polynomials, since $\dim \rSO(2) = 1$.

\begin{proof}[Proof of \Cref{thm:variety-binary-symmetric}]
Given $f(u,v) = \sum_{k=0}^d a_k u^{d-k} v^k$, we have that $f \in X_\sym$ if there exists an orthogonal change of variables
\begin{equation}\label{eq:orthogonal-change-variables}
    \begin{pmatrix}
    x \\ y
\end{pmatrix} = 
\frac{1}{\sqrt{1 + z^2}}\begin{pmatrix}
    1 & -z \\ z & 1
\end{pmatrix} \begin{pmatrix}
    u \\ v
\end{pmatrix}
\end{equation}
such that
$f(x,y) \in V_\sym$.
Then $f \in X_\sym$ if there exists $z \in \R$ such that $b_1(z) = b_{d-1}(z) = 0$,
where $f(u - zv, zu + v) = \sum_{k=0}^d b_k(z) u^{d-k}v^k$ for $b_k(z) \in \mathbb{Q}[a_0, \dots, a_d][z]$.
When $z^2 + 1 = 0$, the linear transformation \eqref{eq:orthogonal-change-variables} is not invertible.
So we are interested in $z \neq \ri$. 
We express the coefficients of $b_1(z)$ and $b_{d-1}(z)$ as functions of $a_0, \dots, a_d$, as follows:
\begin{align*}
    (u - zv)^{d-k}(zu + v)^k &= \sum_{i=0}^k \sum_{j=0}^{d-k} \binom{k}{i} \binom{d-k}{j} (-1)^j z^{k-i+j} u^{d-i-j} v^{i+j} = \\ 
    & = \sum_{i=0}^k \sum_{j=0}^{d-k} \binom{k}{i} \binom{d-k}{j} (-1)^{d-k-j} z^{d-k+i-j} u^{i + j} v^{d-i-j}
\end{align*}
The coefficient of $u^{d-1}v$ in $(u - zv)^{d-k}(zu + v)^k$ is $z^{k-1}(k - (d-k)z^2)$, and the coefficient of $uv^{d-1}$ in $(u - zv)^{d-k}(zu + v)^k$ is $(-z)^{d-k-1} \left( (d-k) - k z^2 \right)$.
Hence,
\[
b_1(z) = \sum_{k=0}^d a_k (k - (d-k)z^2)z^{k-1} = \sum_{k=0}^d \left( (k+1) a_{k+1} - (d-k+1)a_{k-1}\right)z^k
\]
and
\begin{align*}
    b_{d-1}(z) &= \sum_{k=0}^d a_k (-z)^{d-k-1} ((d-k) - kz^2) = (-z)^d a_k (1/z)^{k+1} ((d-k) - k (1/z)^{-2}) \\
    &=(-z)^d a_k (1/z)^{k-1} ((d-k)(1/z)^{2} - k ) = -(-z^d) b_1(-1/z).
\end{align*}
Set $F(z) \coloneqq b_1(z)$.
Varying $z \in \R$ in \eqref{eq:orthogonal-change-variables} we parametrize $\rSO(2) \setminus \left\{ \begin{pmatrix}
        0 & \pm 1 \\
        \mp 1 & 0
    \end{pmatrix}\right\}$. 
    Removing these special matrices does not change the Zariski closure:
    \[
    \overline{X_\sym} = \overline{ \rSO(2) \bullet V_\sym } = \overline{ \left\{ \frac{1}{\sqrt{1 + z^2}}\begin{pmatrix}
    1 & z \\ -z & 1
\end{pmatrix} \bullet \T \mid z \in \R, \T \in V_\sym \right\} }.
    \]
Evaluating $F$ at $\ri$ gives
\[
F(\ri) = d \sum_{k = 0}^{\floor{(d-1)/2}} (-1)^k a_{2k+1} + id\sum_{k = 0}^{\floor{d/2}} (-1)^{k+1} a_{2k}.
\]
Hence $b_1(\ri) = b_{d-1}(\ri) = 0$ if and only if 
\[
F(\ri)F(-\ri) = d^2 \left(\left( \sum_{k=0}^{\floor{d/2}} (-1)^k a_{2k}\right)^2 + \left( \sum_{k=0}^{\floor{(d-1)/2}} (-1)^k a_{2k+1}\right)^2 \right) = 0.
\]
The product $F(\ri)F(-\ri)$ is an irreducible factor of $\Res_z(F(z), z^dF(1/z))$ with multiplicity one,
by \Cref{lem:factors-resultant} applied to $F(z) \in \mathbb{Q}[a_0, \dots, a_d][z]$.

If $d$ is odd, then $F$ is the general polynomial of degree $d$ and so
$
r \coloneqq \frac{\Res_z(F(z), z^dF(1/z))}{(F(i)F(-i))}
$ is a perfect square that defines $X_\sym$, by~\Cref{lem:factors-resultant}.

Note that $F$ is not the general polynomial for even $d$, so we cannot directly say that $r$ is a perfect square of an irreducible polynomial. Nevertheless, $r$ defines the polynomials $F$ which share a root with $z^d F(-1/z)$ other than $\ri$, by~\Cref{lem:factors-resultant}. When $z \in \C \setminus \{ \pm \ri \}$ we are parametrizing $\rSO(2, \C)$ in \eqref{eq:orthogonal-change-variables}. Let $X_\sym(\C)$ and $V_\sym(\C)$ be the complexifications of $X_\sym$ and $V_\sym$, respectively. Then, $X_\sym(\C) = \rSO(2, \C) \cdot V_\sym(\C)$, following the argument from  \Cref{prop:Xirredicible}. Therefore, $X_\sym (\C)$ is irreducible and it is the vanishing locus of $r$, so $r$ is a perfect square of an irreducible polynomial by \Cref{lem:factors-resultant}.
\end{proof}

\begin{corollary}
    For $d \geq 3$, the degree of the variety $\overline{X_\sym} \subset S^d(\R^2)$ is $d-1$.
\end{corollary}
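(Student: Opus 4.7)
The plan is to read off the degree of $\overline{X_\sym}$ directly from the resultant expression in \Cref{thm:variety-binary-symmetric} together with the factorization established in \Cref{lem:factors-resultant}.

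First, recall that $\overline{X_\sym}\subset S^d(\R^2)$ is an irreducible hypersurface (by \Cref{thm:dimensionXsym} and \Cref{prop:Xirreducible-symmetric}), so its degree equals the degree of its (squarefree) defining polynomial. By \Cref{thm:variety-binary-symmetric}, $\overline{X_\sym}$ is the vanishing locus of $r:=\Res_z(F(z),z^dF(-1/z))/(F(\ri)F(-\ri))$, where $F(z)\in \mathbb{Q}[t_0,\ldots,t_d][z]$ has coefficients that are linear forms in $t_0,\ldots,t_d$. Applying \Cref{lem:factors-resultant} to $F$ gives $\Res_z(F(z),z^dF(-1/z)) = (-1)^d p^2\, (F(\ri)F(-\ri))$ with $p\in \mathbb{Q}[t_0,\ldots,t_d]$ irreducible, so $r=\pm p^2$ and $\overline{X_\sym}=\{p=0\}$.

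Next, I would compute the degree of $p$ in the variables $t_0,\ldots,t_d$ by a simple degree count. The resultant of two polynomials of degree $d$ is a form of total degree $2d$ in their coefficients (this is the size of the Sylvester matrix, where each row contributes degree one in the coefficients of one of the two polynomials). Since the coefficients of both $F(z)$ and $z^dF(-1/z)$ are linear in $t_0,\ldots,t_d$, the numerator $\Res_z(F(z),z^dF(-1/z))$ has total degree $2d$ in $t_0,\ldots,t_d$. The denominator $F(\ri)F(-\ri)$ is quadratic in the coefficients of $F$, hence quadratic in $t_0,\ldots,t_d$. Therefore $p^2$ has degree $2d-2$, so $\deg p = d-1$, and consequently $\deg\overline{X_\sym}=d-1$.

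There is essentially no obstacle here once \Cref{thm:variety-binary-symmetric} and \Cref{lem:factors-resultant} are in hand; the only mild subtlety is to confirm that $p$ really is the reduced defining polynomial of $\overline{X_\sym}$ (so that the degree of the hypersurface coincides with $\deg p$ rather than $\deg p^2$). This follows from the irreducibility of $p$ established in \Cref{lem:factors-resultant} together with the irreducibility of $\overline{X_\sym}$ from \Cref{prop:Xirreducible-symmetric}.
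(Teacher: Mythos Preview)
Your proof is correct and follows essentially the same approach as the paper: both compute that the resultant has degree $2d$ in the $t_i$, subtract the degree-$2$ factor $F(\ri)F(-\ri)$, and then halve because $r$ is the square of an irreducible polynomial. One small caveat: applying \Cref{lem:factors-resultant} directly to $F$ requires the coefficients of $F$ to be algebraically independent, which holds for odd $d$ but not for even $d$; in the even case the irreducibility of $p$ is established inside the proof of \Cref{thm:variety-binary-symmetric} via complexification, so it is cleaner to cite that theorem (as the paper does) rather than the lemma alone.
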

\begin{proof}
    The resultant of two degree-$d$  polynomials has degree $2d$. The variety $\overline{X_\sym}$ is the vanishing locus of $r = \frac{\Res_z\left(F(z), z^d F(-1/z) \right)}{F(\ri)F(-\ri)}$, where $F$ is the degree $d$ polynomial in \Cref{thm:variety-binary-symmetric}. So $r$ has degree $2(d-1)$. It is the square of an irreducible polynomial, by \Cref{thm:variety-binary-symmetric} and \Cref{lem:factors-resultant}.
    So $\overline{X_\sym}$ is defined by an irreducible polynomial of degree $d-1$.
\end{proof}

\section{Structured Tucker decompositions}\label{sec:multilinear-svd}

The set $V$ generalizes diagonal matrices in the case $d=2$. We propose a tensor decomposition in which $V$ plays the role as the set of diagonal matrices in the SVD. This decomposition specializes the Tucker decomposition~\cite{tucker1966some}, see~\cite[Chapter 8]{hackbusch2012tensor}, and generalizes tensor diagonalization~\cite{robeva2016orthogonal,robeva2017singular}. 
Structured core tensors in Tucker decompositions also appear in the study of signature tensors~\cite{amendola2019varieties,pfeffer2019learning}. 

\begin{theorem}\label{thm:multilinear-svd}
    If $\T \in \R^{n_1 \times \cdots \times n_d}$ has an orthogonal basis of singular vector tuples then $\T= \Q \cdot \cS$, where $\Q\in \rO(n_1) \times \cdots \times \rO(n_d)$ and $\cS \in V$ with $\cS_{1,\dots, 1}\geq \cdots \geq \cS_{n_1,\dots, n_1} \geq 0$. The 
    first $n_1$ columns of $Q_1, \dots, Q_d$ give the orthogonal singular vectors of $\T$; the diagonal entries of $\cS$ are the corresponding singular values.
    If $\T$ is generic in $X$, 
    the leading subtensor of~$\cS$ indexed by $[n_1]^d$ is unique and the first $n_1$ columns of $Q_1, \dots, Q_d$ are unique up to 
sign.
\end{theorem}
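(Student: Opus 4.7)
Existence of a decomposition $\T=\Q\cdot\cS$ with $\Q\in\rO(n_1)\times\cdots\times\rO(n_d)$ and $\cS\in V$ is the statement of \Cref{prop:X}. To achieve the ordering convention, I rely on two moves that preserve both the decomposition and the subspace $V$: (i)~simultaneously permuting the first $n_1$ columns of each $Q_k$ by a common $\sigma\in S_{n_1}$ and applying the corresponding permutation to $\cS$ along every mode, which reorders the diagonal entries $\cS_{j,\ldots,j}$; and (ii)~replacing $Q_k$ by $Q_kD$ with $D=\diag(1,\ldots,-1,\ldots,1)$ (the $-1$ in position $j$) and compensating by acting with $D$ on mode $k$ of $\cS$, which flips only the entries with $k$-th index equal to $j$ and in particular flips $\cS_{j,\ldots,j}$ alone among the diagonal entries. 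A direct check on the defining equalities $\cS_{i_1,j,\ldots,j}=\cdots=\cS_{j,\ldots,j,i_d}=0$ confirms that both moves preserve $V$. Applying (i) to arrange the diagonal entries in decreasing order of absolute value and then (ii) to make each non-negative yields the claimed ordering.

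For the singular value/vector interpretation, \Cref{lem:Vcanonical} gives that $e_1^{\otimes d},\ldots,e_{n_1}^{\otimes d}$ are singular vector tuples of $\cS$, and the direct contraction $\cS(e_j,\ldots,\cdot_k,\ldots,e_j)=\cS_{j,\ldots,j}\,e_j$ (which uses $\cS\in V$) identifies $\cS_{j,\ldots,j}$ as the associated singular value. By the equivariance in \Cref{lem:equivariance-singular-vectors}, the tuples $(q_j^{(1)},\ldots,q_j^{(d)})$ formed by the first $n_1$ columns of $Q_1,\ldots,Q_d$ are singular vector tuples of $\T$ with the same singular values.

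For uniqueness with $\T$ generic in $X$, I consider two normalized decompositions $\T=\Q\cdot\cS=\tilde\Q\cdot\tilde\cS$ and set $\bR=(\tilde Q_1^\top Q_1,\ldots,\tilde Q_d^\top Q_d)$, so that $\bR\cdot\cS=\tilde\cS$ with both $\cS,\tilde\cS\in V$. Since $\T$ is generic, $\cS$ can be taken generic in $V$, and \Cref{prop:main-equivalence} forces $\bR$ into the block form~\eqref{eq:Qspec}: $R_1\in\rSP(n_1)$, and each $R_k$ ($k\geq 2$) has upper-left block $R_1D_k$ for a sign-diagonal $D_k$. Restricted to the first $n_1$ basis directions in each mode, $\bR$ acts on the leading $n_1\times\cdots\times n_1$ subtensor of $\cS$ by $(R_1,R_1D_2,\ldots,R_1D_d)$. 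Generic distinctness of the singular values (the diagonal entries) forces the underlying permutation of $R_1$ to be the identity, while non-negativity of the diagonals of both $\cS$ and $\tilde\cS$ forces $\prod_{k}\epsilon_j^{(k)}=+1$ for the sign choices associated with each $j\in[n_1]$. These residual coordinated sign choices are exactly the ``up to sign'' ambiguity on the first $n_1$ columns of each $Q_k$, and once a sign convention is fixed they determine the leading subtensor uniquely.

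The main obstacle I anticipate is the bookkeeping in the last step: verifying that the block form~\eqref{eq:Qspec}, intersected with the normalization constraints (decreasing order and non-negativity of the diagonal), collapses exactly to the coordinated column sign flips claimed, and confirming that these sign flips act consistently on both the first $n_1$ columns of the $Q_k$ and on the leading subtensor of $\cS$.
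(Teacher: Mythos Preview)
Your proposal is correct and follows essentially the same approach as the paper: existence via \Cref{prop:X}, normalization of the diagonal via a simultaneous permutation and a sign flip in one mode (the paper puts the sign matrix $D$ on the first factor only), and uniqueness by invoking the main result. The paper's own proof is terser---it simply writes ``Following \Cref{thm:main}, it remains to show\ldots'' and then carries out the permutation/sign normalization---whereas you unpack the uniqueness step through \Cref{prop:main-equivalence} and track the residual sign structure explicitly; this is the same argument with more detail filled in.
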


\begin{proof}
Following~\Cref{thm:main}, it remains to show that we can permute the columns of the $Q_k$'s so that $\cS_{1, \dots, 1} \geq \cdots \geq \cS_{n_1, \dots, n_1}$, and we can flip signs to ensure that the diagonal entries are non-negative. Let $\T = \Q \cdot \cS \in X$, with $\Q \in \rO(n_1) \times \cdots \times \rO(n_d)$ and $\cS \in V$. 
    Consider a permutation $\sigma \in S_{n_1}$ such that $|\cS_{\sigma(1), \dots, \sigma(1)}| \geq \cdots \geq |\cS_{\sigma(n_1), \dots, \sigma(n_1)}| \geq 0$, and let~$P_\sigma$ be the corresponding $n_1 \times n_1$ permutation matrix.
    Let $D \in \R^{n_1 \times n_1}$ be a diagonal matrix such that $D_{ii} = 1$ if $S_{\sigma(i), \dots, \sigma(i)} \geq 0$ and $D_{ii} = -1$ if $S_{\sigma(i), \dots, \sigma(i)} < 0$.
    Let $\bP \in \rO(n_1) \times \cdots \times \rO(n_d)$ such that $P_1 = D P_{\sigma}$ and the top-left submatrix of $P_k$ if $P_\sigma$ for all $k \geq 2$. Let $\tilde{\cS} = \bP \cdot \cS$ and~$\tilde{\Q} = (Q_1 P_1^\top, \dots, Q_d P_d^\top)$. Then $\T = \tilde{\Q} \cdot \tilde{\cS}$ and $\tilde{\cS} \in V$ satisfies $\tilde{\cS}_{1 \dots, 1} \geq \cdots \geq \tilde{\cS}_{n_1, \dots, n_1} \geq 0$. If~$\T$ is generic in $X$ then $\tilde{\cS}_{1 \dots, 1} > \cdots > \tilde{\cS}_{n_1, \dots, n_1} > 0$.
\end{proof}

\begin{remark}
    If $d = 2$, then $\cS$ is a diagonal matrix and uniqueness of the first $n_1$ columns of $(Q_1, Q_2)$ up to sign recovers the standard singular value decomposition.
\end{remark}

For symmetric tensors, we get an analogous statement that generalizes the spectral eigendecomposition to higher-order tensors. The proof is similar to the one above, except that for symmetric tensors we act with the same orthogonal matrix on all factors. We include the proof for the symmetric case below for completeness.

\begin{theorem}\label{thm:multilinear-eigendecomposition}
    If $\T \in S^d(\R^n)$ has an orthogonal basis of eigenvectors then $\T= Q \bullet \cS$, where $Q\in \rO(n)$ and $\cS\in V_\sym$ with $\cS_{1,\dots, 1}\geq \cdots \geq \cS_{n,\dots, n}$. The columns of $Q$ are the orthogonal eigenvectors of $\T$; the diagonal entries of $\cS$ are the corresponding eigenvalues.
    If $\T$ is generic and $d$ is even, the decomposition is unique up to the signs of the columns of $Q$. If $d$ is odd, we get generic uniqueness after imposing that the diagonal entries of $\cS$ are non-negative.
\end{theorem}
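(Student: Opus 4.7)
The plan is to mirror the proof of \Cref{thm:multilinear-svd}, replacing \Cref{prop:X} and \Cref{thm:main} by their symmetric analogues, with the parity of $d$ as the only essentially new ingredient.

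For existence, the hypothesis combined with \Cref{prop:X-symmetric} gives $\T \in X_\sym$, so $\T = Q_0 \bullet \cS_0$ for some $Q_0 \in \rO(n)$ and $\cS_0 \in V_\sym$. To enforce the ordering on the diagonal I would pick a permutation $\sigma \in S_n$ that sorts $\{(\cS_0)_{j,\dots,j}\}$ weakly decreasingly, let $P_\sigma \in \rO(n)$ be the associated permutation matrix, and set $Q = Q_0 P_\sigma^\top$ and $\cS = P_\sigma \bullet \cS_0$; the resulting $\cS$ lies in $V_\sym$ with the required ordering. \Cref{lem:Vcanonical-symmetric} identifies $e_j$ as an eigenvector of $\cS$ with eigenvalue $\cS_{j,\dots,j}$, and \Cref{lem:equivariance-eigenvectors} transports this to make $Qe_j$ an eigenvector of $\T$ with the same eigenvalue, identifying the columns of $Q$ and the diagonal of $\cS$ as claimed.

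For uniqueness, \Cref{thm:main-symmetric} guarantees that for generic $\T \in X_\sym$ the orthogonal basis of eigenvectors is unique up to sign. If $\T = \tilde Q \bullet \tilde \cS$ is another decomposition satisfying the theorem's conditions, the columns of $\tilde Q$ must agree with those of $Q$ up to sign and reordering. Generically the diagonal entries are distinct, so the ordering constraint forces the reordering to be the identity, leaving $\tilde Q = QD$ and $\tilde \cS = D \bullet \cS$ for some $D = \diag(\epsilon_1,\dots,\epsilon_n)$ with $\epsilon_j \in \{\pm 1\}$.

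The parity dichotomy is the only new content and is the step I expect to require the most care: flipping $e_j \mapsto -e_j$ in the defining relation $\cS(\cdot,e_j,\dots,e_j) = \lambda_j e_j$ gives $\cS(\cdot,-e_j,\dots,-e_j) = (-1)^{d-1}\lambda_j e_j = (-1)^d \lambda_j(-e_j)$, so the eigenvalue acquires a factor of $(-1)^d$. When $d$ is even each $\epsilon_j$ leaves the diagonal of $\cS$ invariant, so the sign ambiguity is irreducible. When $d$ is odd, flipping $\epsilon_j$ flips the sign of $\cS_{j,\dots,j}$, so imposing non-negative diagonal entries determines each $\epsilon_j$ uniquely (generically the eigenvalues are nonzero). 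This bookkeeping cannot be imported directly from \Cref{thm:multilinear-svd}, since in the non-symmetric case one can absorb a sign on a single factor without perturbing the others; the rest of the argument is a routine transplant.
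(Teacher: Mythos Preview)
Your proposal is correct and follows essentially the same line as the paper's proof: write $\T = Q \bullet \cS$ via \Cref{prop:X-symmetric}, sort the diagonal with a permutation matrix, for odd $d$ adjust signs via a diagonal $\pm 1$ matrix to enforce non-negativity, and then invoke \Cref{thm:main-symmetric} for generic uniqueness. The paper's proof is in fact terser and leaves the parity bookkeeping implicit, so your explicit analysis of the $(-1)^d$ factor on the eigenvalue is a welcome elaboration; one small caveat is that in the odd case the ordering constraint alone does not pin down the permutation (e.g.\ a signed swap can send diagonal $(3,1)$ to $(1,-3)$), so you should use the non-negativity hypothesis together with the ordering when ruling out nontrivial permutations, but this is only a reordering of steps you already have.
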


\begin{proof}
    Let $\T = Q\bullet \cS \in X_\sym$ with $Q\in \rO(n)$ and $\cS\in V_\sym$. We can replace $\cS$ with $\tilde{\cS} = P\bullet \cS$ and $Q$ with $\tilde{Q} = QP^\top$ where $P$ is a permutation matrix so that $\tilde{\cS}_{1, \cdots, 1}\geq \cdots \geq \tilde{\cS}_{n, \dots,  n}$. If $d$ is odd, we can replace $\tilde{\cS}$ with $\hat{\cS} = D\bullet \tilde{\cS}$ and $\hat{Q} = \tilde{Q}D$ for some diagonal matrix with $\pm 1$ on the diagonal so that $\hat{\cS}$ has non-negative diagonal entries. 
    If $\T$ is generic in $X_\sym$ then $\tilde{\cS}_{1, \dots, 1} > \cdots > \tilde{\cS}_{n, \dots, n}$, and the statement follows from \Cref{thm:main-symmetric}.
\end{proof}

\begin{remark}
If $d=2$ then $\cS$ is a diagonal matrix and uniqueness of $Q$ up to the signs of its columns recovers the standard spectral decomposition for symmetric matrices.
\end{remark}

The decomposition in \Cref{thm:multilinear-svd} exists for tensors in $X_\sym$. 
It is also not as restrictive as the odeco decomposition, for which $\cS$ is diagonal. As a consequence, the approximation of an arbitrary tensor $\cT\approx \Q \cdot \cS$ for some $\cS\in V$ will be at least as close as its odeco approximation. Let $\cI = \{ \bi \in [n_1] \times \cdots \times [n_d] \mid \exists j \in [n_1] \text{ s.t. } d_H(\bi, (j, \dots, j)) = 1\}$. An approximation can be found by
\[
{\rm minimize} \;\;\;{\rm dist}(\Q^\top \cdot  \T,V)^2\;=\;\sum_{\bi \in \cI}[\Q^\top \cdot \T]_{\bi}^2\;\;\;\mbox{subject to } \Q \in \rO(n_1) \times \cdots \times \rO(n_d).
\]
Similarly, the decomposition in \Cref{thm:multilinear-eigendecomposition} exists for tensors in $X_\sym$, and is not as restrictive as the symmetric odeco decomposition, for which $\cS$ is diagonal. An approximation of an arbitrary tensor $\cT \approx Q \bullet \cS$ for some $\cS\in V_\sym$ can be found by
\[
{\rm minimize} \;\;\;{\rm dist}(Q^\top \bullet \T,V_\sym)^2\;=\;\sum_{i\neq j}[Q^\top \bullet \T]_{ij\cdots j}^2\;\;\;\mbox{subject to }Q^\top\in \rO(n).
\]

\section{Numerical experiments} \label{sec:numerical}

In this section, we perform numerical computations of the structured Tucker decompositions. We use Riemannian gradient descent to solve the optimization problems at the end of \Cref{sec:multilinear-svd}. We do so using \texttt{Pymanopt} \cite{townsend2016pymanopt}. We compare to (symmetric) odeco tensors, replacing $V$ (resp. $V_{\sym}$) with $V_{\diag}$ and using the same optimization approach.

\subsection{Cumulant tensors} \label{sec:numerical-CA} Given an $n$-dimensional random vector $Z$, let $K_d(Z) \in S^{d}(\R^n)$ be the tensor of all order-$d$ cumulants of $Z$. We analyze how well the symmetric odeco tensors, $X_{\sym-\mathrm{odeco}}$, and our set $X_{\sym}$ approximate the cumulant tensors of real datasets. This is closely related to independent and mean independent component analysis \cite{mesters2022non}. In practice, we do not have access to the exact cumulants of $Z$, so we estimate them using k-statistics (see, e.g., \cite[Chapter 4]{mccullagh2018tensor}) with \texttt{PyMoment} \cite{PyMoments:2020}.

The Iris flower dataset \cite{iris_53} contains 150 measurements of iris flowers, each with four features (sepal and petal length and width). The Fama-Fench Data Library, provides financial datasets. We analyze the average value weighted returns of 25 portfolios along 1186 timesteps\footnote{We use the dataset called ``25 Portfolios Formed on Size and Book-to-Market (5 x 5)'' available at \texttt{\url{https://mba.tuck.dartmouth.edu/pages/faculty/ken.french/data_library.html}}.}.
The electroencephalogram (EEG) dataset \cite{chavarriaga2010learning} records the brain's electrical activity of six subjects over two sessions. We analyze 91648 observations of 64 electrodes\footnote{We use the data indexed by S01-1 in the 22nd dataset available at 
\texttt{\url{https://bnci-horizon-2020.eu/database/data-sets}}.}.

For Iris and Portfolios, we compute the fourth-order cumulant tensor. For EEG, we compute its third-order cumulant tensor. For a tensor $\T$, we define the distance to $X_{\sym}$ as $\min_{Q \in \rO(n)}\dist(Q \bullet \T, V_{\sym})/ \|\T\|$, where we divide by $\| \T\|$ so that the result is in between zero and one. We tackle this optimization problem via Riemannian gradient descent, initializing at 20 random points. For $X_{\sym - \mathrm{odeco}}$ we replace $V_{\sym}$ with $V_{\diag}$. \Cref{tab:distance-X-odeco-symmetric} shows that $X_{\sym}$ approximates the cumulant tensors better than $X_{\sym - \mathrm{odeco}}$.

\begin{table}[h]
    \centering
    \begin{tabular}{l|c|c|c}
         Tensor & Format & Distance to $X_{\sym}$ & Distance to $X_{\sym-\mathrm{odeco}}$ \\
         \hline
        Iris & $4 \times 4 \times 4 \times 4$ & $0.23$ & $0.67 \vphantom{a^{\frac{a^a}{a^a}}}$ \\
        Portfolios & $25 \times 25 \times 25 \times 25$ & $0.036$ & $0.36$ \\
        EEG & $64 \times 64 \times 64$ & $0.068$ & $0.60$\\
    \end{tabular}
    \caption{Comparison of how well $X_{\sym}$ and $X_{\sym-\mathrm{odeco}}$ approximate each cumulant tensor}
    \label{tab:distance-X-odeco-symmetric}
\end{table}

\subsection{Data tensors} \label{sec:numerical-data-tensors}
We analyze how well the varieties $X$ and $X_{\mathrm{odeco}}$ approximate data tensors\footnote{These three tensors are available at 
\texttt{\url{https://gitlab.com/tensors}}.}. The monkey brain machine interface (BMI) tensor \cite{vyas2018neural, vyas2020causal} contains spike data from $43$ neurons along $200$ time steps in $88$ trials. The excitation emission matrix (EEM) tensor \cite{acar2014structure} contains fluorescent spectroscopy measurements of $18$ samples of $251$ emissions and $21$ excitations. The Miranda turbulent flow tensor \cite{cabot2006reynolds, zhao2020sdrbench} measures the density of a mixing of two fluids on a $2048 \times 256 \times 256$ grid. 
As before, we scale the distance to $X$ or $X_{\mathrm{odeco}}$ by dividing by the norm of the tensor, so that the result is between zero and one. We report the minimum value obtained from running Riemannian gradient descent with 20 random initial points, both for $X$ and $X_{\mathrm{odeco}}$. \Cref{tab:distance-X-odeco} shows that we approximate the data tensors better with $X$ than with $X_{\mathrm{odeco}}$.

\begin{table}[h]
    \centering
    \begin{tabular}{l|c|c|c}
         Tensor & Format & Distance to $X$ & Distance to $X_\mathrm{odeco}$ \\
         \hline
        BMI & $43 \times 200 \times 88$ & $2.1 \times 10^{-3}$ & $4.7 \times 10^{-1} \vphantom{a^{\frac{a^a}{a^a}}}$ \\
        EEM & $18 \times 251 \times 21$ & $4.0 \times 10^{-3}$ & $3.5 \times 10^{-1}$ \\
        Miranda & $2048 \times 256 \times 256$ & $6.0 \times 10^{-3}$ & $1.2 \times 10^{-1}$\\
    \end{tabular}
    \caption{Comparison of how well $X$ and $X_{\mathrm{odeco}}$ approximate each tensor}
    \label{tab:distance-X-odeco}
\end{table}

\begin{remark}
    In the experiments in Sections \ref{sec:numerical-CA} and \ref{sec:numerical-data-tensors} we reported the optimum value of 20 trials, since we focus on goodness of fit. However, the \texttt{RGD} approaches are fairly stable under different initializations: the standard deviation of the objective function at convergence for the different trials is at least one order of magnitude smaller than its mean.
\end{remark}

\section*{Discussion}
We have shown that generic (symmetric) tensors with an orthogonal basis of singular vector tuples (resp. eigenvectors) have a unique such basis, except for $2 \times 2 \times 2 \times 2$ tensors. For matrices, generic here means that all its singular values (resp. eigenvalues) are distinct. Obtaining genericity conditions in this context for tensors in $X$ (resp. $X_\sym$) is an interesting direction of future work. Moreover, there are numerical and optimization challenges arising from the tensor decompositions we propose in this paper. We think that analyzing the convergence guarantees of the Riemannian gradient descent approach or coming up with new algorithms to decompose tensors in $X$ or $X_\sym$ deserves further investigation.

\section*{Acknowledgments}
We thank Carlos D'Andrea, Nathan Henry, and Geert Mesters  for helpful discussions. AR was supported by fellowships from ``la
Caixa” Foundation (ID 100010434), with fellowship code LCF/BQ/EU23/12010097, and from RCCHU.
AS was supported by an Alfred P. Sloan research fellowship.
PZ was
supported by NSERC grant RGPIN-2023-03481.  

\bibliographystyle{alpha}
\bibliography{biblio}

\end{document}